\documentclass[12pt]{amsart}

\usepackage{amsfonts,amsmath,amsthm,amssymb}
\usepackage[mathscr]{eucal}
\usepackage{eufrak}

\usepackage{enumitem}
\setlist[enumerate]{font=\normalfont}

\usepackage{hyperref}


\setlength{\textwidth}{36true pc}
\setlength{\headheight}{8true pt} 
\setlength{\oddsidemargin}{15 truept}
\setlength{\evensidemargin}{15 truept}
\setlength{\textheight}{572true pt}


\newtheorem{thm}{Theorem}[section]
\newtheorem{cor}[thm]{Corollary}
\newtheorem{lm}[thm]{Lemma}
\newtheorem{prp}[thm]{Proposition}

\theoremstyle{definition}
\newtheorem{df}[thm]{Definition}

\newtheorem{clm}{Claim}[thm]

\theoremstyle{remark}
\newtheorem{rem}[thm]{Remark}


\theoremstyle{plain}

\theoremstyle{definition}

\numberwithin{equation}{section}




\newcommand{\set}[2]{\{#1\,:\,\text{#2}\}} 
\newcommand{\m}[1]{{\mathbf{\uppercase{#1}}}}

\DeclareMathOperator{\sg}{Sg}
\DeclareMathOperator{\Con}{Con}
\DeclareMathOperator{\Pol}{Pol}



\newcommand{\ra}{\rightarrow}
\newcommand{\tup}[1]{\mathbf{#1}}             
\newcommand{\calV}{\mathcal V}                
\newcommand{\proj}{\operatorname{pr}}

\newcommand{\Do}{D^o}

\newcommand{\tr}{\operatorname{tr}}
\newcommand{\reftr}{\operatorname{ref\hspace{-1pt}.\hspace{-2pt}tr}}

\newcommand{\graph}{\operatorname{graph}}
\newcommand{\HSP}{\mathsf{HSP}}
\newcommand{\HS}{\mathsf{HS}}

\newcommand{\Grp}[1]{\operatorname{Grp}_{\m #1}}


\newcommand{\typ}{\operatorname{typ}}

\newcommand{\barsigma}{\overline{\sigma}}
\newcommand{\barrho}{\overline{\rho}}
\newcommand{\baralpha}{\overline{\alpha}}

\newcommand{\barpi}{\overline{\pi}}

\newcommand{\barbeta}{\overline{\beta}}

\newcommand{\barT}{\overline{T}}
\newcommand{\barA}{\overline{A}}
\newcommand{\barB}{\overline{B}}
\newcommand{\barma}{\overline{\m a}}
\newcommand{\barmb}{\overline{\m b}}

\newcommand{\coverdelta}{\delta^+}

\newcommand{\coverrho}{\rho^+}
\newcommand{\coversigma}{\sigma^+}

\newcommand{\coveralpha}{\alpha^+}
\newcommand{\coverzero}{0^+}


\newcommand{\Dmon}{\varphi}     

\newcommand{\montwo}{\kappa} 
\newcommand{\Opt}{\operatorname{Opt}}
\newcommand{\TD}[1]{T_{\m #1}^{D}}

\newcommand{\Topt}{T^{\,\mathsf{opt}}}

\newcommand{\nonempty}{non-empty}
\newcommand{\Implies}{\,$\Rightarrow$\,}

\newcommand{\Iff}{\,$\Leftrightarrow$\,}

\newcommand{\taurho}{\tau}
\newcommand{\genCov}{\operatorname{Cov}}
\newcommand{\Cov}{\operatorname{Cov}^+}

\newcommand{\proper}{similarity}

\newcommand{\rooted}{good}
\newcommand{\Rooted}{Good}

\newcommand{\Conp}{\Con^\ast}

\newcommand{\Ponelmmaltsev}{Lemma 3.1}
\newcommand{\Ponelmgumm}{Lemma 3.5}
\newcommand{\Ponelmpoly}{Lemma 3.10}
\newcommand{\Ponelmfacesitone}{Lemma 4.4(1)}
\newcommand{\PonecorDA}{Corollary 4.8}
\newcommand{\Ponethmpersp}{Theorem 7.7}
\newcommand{\Ponecorsimprop}{Corollary 7.8(2)}


\begin{document}
\bibliographystyle{siam}

\title{Zhuk's bridges, centralizers, and similarity}

\author{Ross Willard}
\address{Pure Mathematics Department, University of Waterloo, Waterloo,
ON N2L 3G1 Canada}
\email{ross.willard@uwaterloo.ca}
\urladdr{www.math.uwaterloo.ca/$\sim$rdwillar}

\thanks{The support of the Natural Sciences and Engineering Research Council (NSERC) of Canada is gratefully acknowledged.}
\keywords{universal algebra, abelian congruence, Taylor variety, 
bridge, centralizer, similarity}

\subjclass[2010]{08A05, 08B99 (Primary), 08A70 (Secondary).}

\date{Version 2 -- January 19, 2026}

\begin{abstract}
This is the second of three papers motivated by the author's desire to 
understand and explain ``algebraically" 
one aspect of Dmitriy Zhuk's proof of the CSP Dichotomy Theorem. 
In this paper
we extend Zhuk's ``bridge" 
construction to arbitrary meet-irreducible congruences 
of finite algebras in locally finite varieties with a Taylor term.
We then connect bridges to centrality and similarity. 
In particular, we prove that Zhuk's
bridges and our ``\proper\ bridges" (defined in our first paper) convey
the same information in locally finite Taylor varieties.
\end{abstract}

\maketitle

\section{Introduction} \label{sec:intro}

Arguably the most important result in universal algebra in the last ten years is the
positive resolution to the Constraint Satisfaction Problem (CSP) Dichotomy Conjecture,
announced independently in 2017 by Andrei Bulatov \cite{bulatov2017} and
Dmitriy Zhuk \cite{zhuk2017,zhuk2020}.
One particular feature of Zhuk's proof is his analysis of 
``rectangular critical" subdirect products $R\leq_{sd} \m a_1\times \cdots \times \m a_n$ of
finite algebras $\m a_1,\ldots,\m a_n$ in certain locally finite idempotent Taylor varieties.  
Zhuk showed that 
such relations $R$ induce derived relations, which he named ``bridges," 
between certain meet-irreducible congruences of $\m a_1,\ldots,\m a_n$
which $R$ determines.
Zhuk also established a number of useful properties of his bridges, and ultimately used them to tease 
out implicit linear equations in CSP instances.  In this and two companion
papers \cite{similar,critical}, we aim to understand ``algebraically"
Zhuk's bridges and their
application to rectangular critical relations.

Our goal in this paper 
is to establish precise connections between Zhuk's bridges,
centrality, and a relation called ``similarity" due to Freese
\cite{freese1983} in the congruence modular setting and extended to varieties
with a weak difference term (including locally finite Taylor varieties)
in our first paper \cite{similar}.
In Section~\ref{sec:defs} we give the basic definitions and
tools needed in this paper.  In Section~\ref{sec:sim} we
summarize the results about similarity from \cite{similar} which we will use
here.
With these preliminaries out of the way, we address two technical
limitations of Zhuk's original presentation:
Zhuk defined his bridges between pairs $(\m a,\rho)$ and $(\m b,\sigma)$
where (i) $\m a$ and $\m b$ are finite algebras in a very special kind of Taylor variety, and
(ii) $\rho$ and $\sigma$ are congruences satisfying an ``irreducibility"
property stronger than meet-irreducibility.
In fact, Zhuk's definitions, and all but one of Zhuk's theorems about bridges
(see Theorem~\ref{thm:zeta}),
work in arbitrary locally finite Taylor varieties, so 
in Section~\ref{sec:bridge} we present his definitions and basic results
avoiding limitation (i).
Then in Section~\ref{sec:taylor} we use tame congruence theory to
show how Zhuk's definitions and basic results can
extend to arbitrary meet-irreducible congruences, avoiding limitation (ii).
Finally, in Section~\ref{sec:central}
we align Zhuk's bridges, in this broader context, 
with the algebraic relations of centrality and similarity.
In particular, we show that between irreducible congruences,
the existence of a Zhuk bridge is equivalent to the existence of
our ``\proper\ bridge"
defined in \cite{similar} (see Definition~\ref{df:bridge}).

\section{Definitions and helpful results} \label{sec:defs}

We assume that the reader is familiar with the fundamentals of universal algebra as given
in \cite{burris-sanka}, \cite{alvin1} or \cite{bergman}.
Our notation generally follows that in \cite{alvin1}, \cite{bergman} and \cite{kearnes-kiss}.
If $\m a$ is an algebra, then $\Con(\m a)$ denotes its congruence lattice.
The smallest and largest congruences of $\m a$ are the
\emph{diagonal} $0_A:=\set{(a,a)}{$a \in A$}$ and the \emph{full congruence}
$1_A:=A^2$
respectively, which will be denoted $0$ and $1$ if no confusion arises.
If $\alpha,\beta \in \Con(\m a)$ with $\alpha\leq\beta$, then $\beta/\alpha$
denotes the congruence of $\m a/\alpha$ corresponding to $\beta$ via the
Correspondence Theorem (\cite[Theorem 4.12]{alvin1} or \cite[Theorem 3.6]{bergman}).
If in addition $\gamma,\delta \in \Con(\m a)$ with $\gamma\leq\delta$, then we write
$(\alpha,\beta)\nearrow (\gamma,\delta)$ 
if $\beta \wedge \gamma = \alpha$ and $\beta \vee \gamma = \delta$.  The notation
$(\gamma,\delta)\searrow (\alpha,\beta)$ means the same thing.
We write $\alpha\prec \beta$ and say that $\beta$ \emph{covers} $\alpha$, 
or is an \emph{upper cover of} $\alpha$, if $\alpha<\beta$ and there does not exist
a congruence $\gamma$ satisfying $\alpha<\gamma<\beta$.
A congruence $\alpha$ is \emph{minimal} if it covers $0$, and is
\emph{completely meet-irreducible} 
if $\alpha\ne 1$ and there exists $\coveralpha$
with $\alpha\prec\coveralpha$ and such that $\alpha<\beta \implies 
\coveralpha \leq \beta$ for all $\beta \in \Con(\m a)$.
When $\m a$ is finite, we use the phrase ``\emph{meet-irreducible}"
to mean the same thing.  We say that $\m a$ is \emph{subdirectly irreducible}
if $0$ is completely meet-irreducible, in which case $\coverzero$ is 
called the \emph{monolith} of $\m a$.
A subset $T\subseteq A$ is a \emph{transversal} for a congruence $\alpha$ if 
it contains exactly one element from each $\alpha$-class.

If $f:A\ra B$ is a function, then its \emph{graph} is the set
$\graph(f)=\set{(a,f(a))}{$a \in A$}$.
If $n>0$, then $[n]$ denotes $\{1,2,\ldots,n\}$.
If $R\subseteq A_1\times \cdots \times A_n$ and $i,j \in [n]$, then
$\proj_i(R)$ denotes the projection of $R$ onto its $i$-th coordinate,
and $\proj_{i,j}(R)$ denotes $\set{(a_i,a_j)}{$(a_1,\ldots,a_n) \in R$}$.

We follow \cite{kearnes-kiss} and refer to a set of operation symbols with assigned
arities as a \emph{signature}.  
Every algebra comes equipped with a signature, which indexes the 
\emph{basic operations} of the algebra.
\emph{Terms} are formal recipes for constructing new operations from the
basic operations
via composition and variable manipulations; see
\cite{alvin1} or \cite{bergman} or any textbook on first-order logic.
We will not distinguish between terms and the term operations they
define in an algebra, except when the distinction is crucial.  
A \emph{polynomial} of an algebra $\m a$ is any operation on $A$ having the form
$t(x_1,\ldots,x_n,\tup c)$ where $t$ is an $(n+k)$-ary term 
in the signature of $\m a$ and $\tup c \in A^k$.  $\Pol_n(\m a)$ is the set of 
all $n$-ary polynomials of $\m a$.

If $\m a$ is an algebra, a term $t(x_1,\ldots,x_n)$ of $\m a$ is
\emph{idempotent} if $\m a$ satisfies the identity $t(x,\ldots,x)\approx x$, 
and is a \emph{Taylor term}
if it is idempotent, $n>1$, and for each $i \in [n]$, $\m a$ satisfies an identity
of the form 
\[
t(u_1,\ldots,u_n) \approx t(v_1,\ldots,v_n)
\]
where each $u_j$ and $v_k$ is the variable $x$ or $y$, and 
$\{u_i,v_i\}=\{x,y\}$.
An algebra is said to be \emph{Taylor} if it has a Taylor term.
A particularly important example of a Taylor term is a \emph{weak near-unanimity term}
(WNU),
which is an $n$-ary idempotent term $w(x_1,\ldots,x_n)$ with $n>1$ which 
satisfies the identities
\[
w(y,x,x,\ldots,x) \approx w(x,y,x,\ldots,x) \approx w(x,x,y,\ldots,x) \approx \cdots \approx
w(x,\ldots,x,y).
\]
Another important example of a Taylor term is a \emph{Maltsev term}; this is a ternary
term $p(x,y,z)$ satisfying the identities
\begin{equation} \label{eq:maltsev}
p(x,x,y) \approx y \approx p(y,x,x).
\end{equation}
The identities \eqref{eq:maltsev} are called the \emph{Maltsev identities}.  Any 
ternary operation
(whether a term or not) satisfying them is called a \emph{Maltsev operation}.

A \emph{variety} is a class of algebras (in a common signature) which is closed under 
subalgebras, homomorphic images, and direct products of arbitrary (including
infinite) families of algebras.  $\HSP(\m a)$ denotes the smallest variety containing $\m a$.
A term is a Taylor term, or a WNU term, for a variety if it is such for
every algebra in the variety.  Because the definitions of Taylor terms and WNU terms 
are given in terms of satisfied identities, a Taylor or WNU term for an algebra $\m a$ is 
automatically a Taylor or WNU term for the variety $\HSP(\m a)$.

Before defining ``weak difference term," we recall the ternary centralizer relation on
congruences and the notion of
abelian congruences.
Given a \nonempty\ set $A$, let $A^{2\times 2}$ denote the set of all
$2\times 2$ matrics over $A$.  If $\m a$ is an algebra, let 
$\m a^{2\times 2}$ denote the algebra with universe $A^{2\times 2}$ which is 
isomorphic to $\m a^4$ via the bijection
\[
\begin{pmatrix} a_1&a_3\\a_2&a_4\end{pmatrix} \mapsto (a_1,a_2,a_3,a_4).
\]

\begin{df} \label{df:M}
Suppose $\m a$ is an algebra and $\theta,\varphi \in \Con(\m a)$.
$M(\theta,\varphi)$ is the subuniverse of $\m a^{2\times 2}$ generated by the
set
\[
X(\theta,\varphi):=\left\{ \begin{pmatrix} c&c\\d&d\end{pmatrix} : (c,d) \in \theta\right\}
\cup
\left\{ \begin{pmatrix} a&b\\a&b\end{pmatrix} : (a,b) \in \varphi\right\}.
\]
The matrices in $M(\theta,\varphi)$ are called $(\theta,\varphi)$-\emph{matrices}.
\end{df}

\begin{df} \label{df:cent}
Suppose $\theta,\varphi,\delta \in \Con(\m a)$.  We say that 
$\varphi$ \emph{centralizes $\theta$ modulo $\delta$}, and write
$C(\varphi,\theta;\delta)$,
if any of the following equivalent conditions holds:
\begin{enumerate}
\item\label{dfcent:it1}
For every matrix in $M(\varphi,\theta)$, if one row is in $\delta$, then so is the other row.
\item \label{dfcent:it2}
For every matrix in $M(\theta,\varphi)$, if one column is in $\delta$, then so is the 
other column.
\item \label{dfcent:it3}
For every $(1+n)$-ary term $t(x,y_1,\ldots,y_n)$ and all $(a,b) \in \varphi$
and $(c_j,d_j) \in \theta$ ($j \in [n]$),
\[
\mbox{if 
$t(a,\tup{c})\stackrel{\delta}{\equiv} t(a,\tup{d})$, then
$t(b,\tup{c})\stackrel{\delta}{\equiv} t(b,\tup{d})$.}
\]
\end{enumerate}
\end{df}

Let $\theta,\delta$ be congruences of an algebra $\m a$.  
The \emph{centralizer} (or
\emph{annihilator}) \emph{of $\theta$ modulo $\delta$}, denoted
$(\delta:\theta)$, is the unique largest congruence
$\varphi$ for which $C(\varphi,\theta;\delta)$ holds.
We say that $\theta$ is \emph{abelian} if $C(\theta,\theta;0)$ holds; equivalently,
if $\theta \leq (0:\theta)$.  
We say that $\m a$ is \emph{abelian} if $1_A$ is abelian.
More generally, if $\theta,\delta \in 
\Con(\m a)$ with
$\delta\leq\theta$, then we say that $\theta$ is \emph{abelian modulo} $\delta$ if
any of the following equivalent conditions holds: (i) $\theta/\delta$ is an abelian
congruence of $\m a/\delta$; (ii) $C(\theta,\theta;\delta)$ holds; (iii) 
$\theta \leq
(\delta:\theta)$.  

\begin{df}
Let $\calV$ be a variety, $\m a\in \calV$, and $d(x,y,z)$ a ternary term in the
signature of $\calV$.
\begin{enumerate}
\item
$d$ is a \emph{weak difference term} for $\m a$ if $d$ is idempotent and for
every pair $\delta,\theta$ of congruences with $\delta\leq \theta$ and $\theta/\delta$
abelian, we have
\begin{equation} \label{eq:wdt}
\mbox{$d(a,a,b) \stackrel{\delta}{\equiv} b \stackrel{\delta}{\equiv} d(b,a,a)$
for all $(a,b) \in \theta$.}
\end{equation}
\item
$d$ is a \emph{weak difference term} for $\calV$ if it is a weak difference term for
every algebra in $\calV$.
\end{enumerate}
\end{df}

Note in particular that if $d$ is a weak difference term for $\m a$ and $\theta$ is
an abelian congruence,
then setting $\delta=0$ in \eqref{eq:wdt} gives that
the restriction of $d$ to any $\theta$-class is a Maltsev operation on that class.
In fact, $d$ induces an abelian group operation on each $\theta$-class in this case,
by the following result of Gumm \cite{gumm-maltsev} and Herrmann \cite{herrmann1979}.

\begin{df}
Suppose $\m a$ is an algebra having a weak difference term $d(x,y,z)$,
and $\theta$ is an abelian congruence of $\m a$.  
Given $e \in A$, let $\Grp a(\theta,e)$ denote the algebra $(e/\theta,+,e)$ whose
universe is the $\theta$-class containing $e$ and whose two operations are 
the binary operation $x+y:= d(x,e,y)$ and the constant $e$.
\end{df}

\begin{lm} [essentially \cite{gumm-maltsev,herrmann1979}; cf.\ \mbox{\cite[\Ponelmgumm]{similar}}] \label{lm:gumm}
Suppose $\m a,d,\theta$ are as in the previous definition and
$e \in A$.
Then $\Grp a(\theta,e)$ is an abelian group with zero element $e$.
Moreover, we have $-x=d(e,x,e)$ and $d(x,y,z)=x-y+z$ for all $x,y,z \in e/\theta$.
\end{lm}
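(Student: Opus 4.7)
The plan is to derive the abelian group structure from two ingredients: the Maltsev-like identities obtained from the weak difference term definition at $\delta=0$, and the term condition $C(\theta,\theta;0)$ expressing the abelianness of $\theta$.

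\textbf{Step 1 (Maltsev identities and identity element).} Since $\theta/0 = \theta$ is abelian by hypothesis, the WDT property applied with $\delta = 0$ gives
\[
d(a,a,b) = b = d(b,a,a) \quad \text{for all } (a,b) \in \theta.
\]
In particular, the restriction of $d$ to $e/\theta$ is a Maltsev operation. Specializing $a = e$ yields $x + e = d(x,e,e) = x$ and $e + x = d(e,e,x) = x$, so $e$ is a two-sided identity for $+$. Moreover, since $d$ is idempotent, $d(e,e,e) = e$; because $d$ preserves $\theta$, both $x+y = d(x,e,y)$ and the candidate inverse $-x = d(e,x,e)$ send $e/\theta$ into itself.

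\textbf{Step 2 (Group axioms via the term condition).} Each of associativity, commutativity, and the inverse law $x + d(e,x,e) = e$ is derived by a separate application of $C(\theta,\theta;0)$ in the equivalent form of Definition~\ref{df:cent}(\ref{dfcent:it3}): for every term $t(u,\bar v)$, every $(a,b) \in \theta$ and all $(c_j,d_j) \in \theta$,
\[
t(a,\bar c) = t(a,\bar d) \;\implies\; t(b,\bar c) = t(b,\bar d).
\]
For each target identity one chooses a polynomial $t$ so that, after substituting a ``diagonal'' value for the first argument, the Maltsev identities of Step 1 collapse both tuples $\bar c$ and $\bar d$ to a common value, verifying the hypothesis; the term condition then propagates the equality to the generic first argument, which is exactly the desired identity inside $e/\theta$. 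For example, commutativity $d(x,e,y) = d(y,e,x)$ comes from evaluating a carefully chosen four-argument $t$ at a point where both $d(x,e,y)$ and $d(y,e,x)$ reduce, via $d(z,z,w)=w$ and $d(w,z,z)=w$, to the same expression.

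\textbf{Step 3 (The formula $d(x,y,z) = x - y + z$).} Once $e/\theta$ is known to be an abelian group under $+$ with $-x = d(e,x,e)$, the ternary polynomial $d$ and the ternary polynomial $(x,y,z) \mapsto x - y + z$ agree whenever $y = z$ (both give $x$) and whenever $x = y$ (both give $z$), by Step 1. A final application of $C(\theta,\theta;0)$ together with the now-established group structure forces equality of these two polynomials on $e/\theta$.

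The main obstacle is the bookkeeping in Step 2: each group axiom requires a slightly different choice of polynomial and of $(\theta,\theta)$-matrix, and one must verify that the chosen matrix actually lies in $M(\theta,\theta)$ so that the term condition applies. These are the classical manipulations of Gumm \cite{gumm-maltsev} and Herrmann \cite{herrmann1979}, and no new difficulty is introduced by the passage to weak difference terms because Step 1 already delivers precisely the Maltsev identities on $e/\theta$ on which those manipulations rely; see \cite[Lemma 3.5]{similar} for a detailed execution.
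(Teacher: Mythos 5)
The paper does not prove this lemma; it is stated with a citation to Gumm, Herrmann, and \cite[Lemma 3.5]{similar}, so there is no in-paper proof to compare against. Your outline correctly recovers the classical strategy that those sources use: Step~1 (the Maltsev identities on each $\theta$-block and the identity element $e$) is exactly right, and Steps~2 and~3 correctly identify repeated application of the term condition $C(\theta,\theta;0)$ as the mechanism for obtaining associativity, commutativity, the inverse law, and the formula $d(x,y,z)=x-y+z$.

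That said, as a proof your Steps~2 and~3 remain at the level of a plan. The phrase ``one chooses a polynomial $t$ so that \dots'' hides all the actual work: for each group axiom one must exhibit a concrete polynomial and a concrete $(\theta,\theta)$-matrix, verify that the matrix really lies in $M(\theta,\theta)$, verify that the ``base'' row or column collapses via the identities from Step~1, and then read off the target identity from the other row or column. These instantiations are not trivial to guess (commutativity in particular requires a somewhat clever choice), and your commutativity example is only gestured at, not carried out. So the proposal is a correct roadmap, not a complete argument. Since the paper itself delegates the details to \cite{gumm-maltsev}, \cite{herrmann1979}, and \cite[Lemma 3.5]{similar}, your pointer to the same references is appropriate, but a self-contained proof would need to supply the specific polynomials and matrix calculations you have left implicit.
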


The next result is folklore.

\begin{lm} [cf.\ \mbox{\cite[\Ponelmmaltsev]{similar}}] \label{lm:maltsev}
Suppose $\m a$ is an algebra and $\rho$ is a reflexive subuniverse of $\m a^2$.
Suppose $\m a$ has a ternary 
term $d(x,y,z)$ such that for all $(a,b) \in \rho$ we have
$d(a,a,b)=b$ and $d(a,b,b)=a$.
Then $\rho\in\Con(\m a)$.
\end{lm}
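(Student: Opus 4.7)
The plan is to verify that $\rho$ is a congruence of $\m a$. Since $\rho$ is assumed reflexive and is a subuniverse of $\m a^2$ (so closed under all basic operations of $\m a$ applied componentwise), only symmetry and transitivity remain. Both will follow from one idea: apply $d$ componentwise to a judiciously chosen triple of pairs in $\rho$, where the triple is designed so that in each coordinate $d$ is evaluated by one of the two Maltsev-like identities supplied by the hypothesis, namely $d(x,x,y)=y$ and $d(x,y,y)=x$ for $(x,y)\in\rho$.

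For symmetry, given $(a,b)\in\rho$, reflexivity places $(a,a)$ and $(b,b)$ in $\rho$, so the triple $((a,a),(a,b),(b,b))$ lies in $\rho^3$. Applying $d$ componentwise produces $(d(a,a,b),\,d(a,b,b))$, and the hypothesis applied to the single pair $(a,b)\in\rho$ evaluates this to $(b,a)$, so $(b,a)\in\rho$. For transitivity, given $(a,b),(b,c)\in\rho$, I form the triple $((a,b),(b,b),(b,c))\in\rho^3$, again using reflexivity to place $(b,b)$ in $\rho$. Componentwise application of $d$ yields $(d(a,b,b),\,d(b,b,c))=(a,c)\in\rho$, where the first coordinate uses $(a,b)\in\rho$ and the second uses $(b,c)\in\rho$.

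There is no real obstacle here; the only subtlety is choosing triples whose componentwise projections have the property that, in each coordinate, the first and third arguments of $d$ already form a pair known to lie in $\rho$, since that is the only way to invoke the given identities. Both triples above are tailored exactly for this, and together with the trivial reflexivity and subuniverse hypotheses they deliver all four congruence properties.
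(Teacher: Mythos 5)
Your proof is correct, and it is exactly the standard folklore argument: the paper itself omits the proof (citing it as folklore from the companion paper), and the intended argument is precisely your componentwise application of $d$ to $((a,a),(a,b),(b,b))$ for symmetry and $((a,b),(b,b),(b,c))$ for transitivity, with reflexivity and the subuniverse hypothesis handling the rest.
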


The next lemma will be needed in Section~\ref{sec:central}.

\begin{lm} \label{lm:poly} \cite[\Ponelmpoly]{similar}
Suppose $\m a$ is an algebra having a weak difference term,
$\mu \in \Con(\m a)$ is an abelian minimal congruence, and $S\leq \m a$ is a
subuniverse of $\m a$ which is a transversal for $\mu$.  
Then $S$ is a maximal proper subuniverse of $\m a$.
\end{lm}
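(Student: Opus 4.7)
The plan is, given a subalgebra $T \supseteq S$ with $T \ne S$, to build a congruence $\theta$ of $\m a$ satisfying $0_A < \theta \leq \mu$, invoke minimality of $\mu$ to conclude $\theta = \mu$, and then deduce $T = A$.

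For each $a \in A$ let $s_a$ denote the unique element of $S \cap a/\mu$. Because $S$ is a subalgebra, $s_{t(\bar a)} = t(s_{a_1},\dots,s_{a_n})$ for every term $t$. By Lemma~\ref{lm:gumm} each $\mu$-class $c/\mu$ is an abelian group $(c/\mu,+,s_c)$ with $x+y := d(x,s_c,y)$, $-x := d(s_c,x,s_c)$, and $d(x,y,z) = x-y+z$. Since $T$ is a subalgebra containing $s_c$, the intersection $T \cap c/\mu$ is a subgroup. Define
\[
\theta \;:=\; \{(a,b) \in \mu : d(a,b,s_a) \in T\}.
\]
Then $\theta$ is reflexive, as $(a,s_a) \in \mu$ together with the weak difference term identity gives $d(a,a,s_a) = s_a \in T$; and $d$ is Mal'cev on $\theta \subseteq \mu$ since $\mu$ is abelian. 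By Lemma~\ref{lm:maltsev}, it remains to show that $\theta$ is a subuniverse of $\m a^2$.

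The main obstacle is this preservation step. For $n$-ary basic $f$ and pairs $(a_i,b_i) \in \theta$, set $e_i := d(a_i,b_i,s_{a_i}) \in T$ and $\bar s := (s_{a_1},\dots,s_{a_n})$; then $f(\bar e) \in T$ because $T$ is a subalgebra. Using $s_{f(\bar a)} = f(\bar s)$, the condition $(f(\bar a),f(\bar b)) \in \theta$ reduces to the coordinatewise commutation identity
\[
d\bigl(f(\bar a),f(\bar b),f(\bar s)\bigr) \;=\; f(\bar e),
\]
a standard consequence of $\mu$ abelian. One route uses a direct matrix calculation: each matrix $\begin{pmatrix} a_i & b_i \\ e_i & s_{a_i} \end{pmatrix}$ lies in $M(\mu,\mu)$, obtained as a $d$-combination of the row generator $\begin{pmatrix} a_i & b_i \\ a_i & b_i \end{pmatrix}$, the column generator $\begin{pmatrix} b_i & b_i \\ s_{a_i} & s_{a_i} \end{pmatrix}$, and the constant matrix on $b_i$ (the weak-difference-term identities compute the entries); applying $f$ coordinatewise yields $\begin{pmatrix} f(\bar a) & f(\bar b) \\ f(\bar e) & f(\bar s) \end{pmatrix} \in M(\mu,\mu)$; then a further $d$-combination with the column generator $\begin{pmatrix} f(\bar b) & f(\bar b) \\ f(\bar s) & f(\bar s) \end{pmatrix}$ and the constant matrix at $f(\bar s)$ produces a matrix in $M(\mu,\mu)$ whose right column is $(f(\bar s),f(\bar s))$ (lying in $0$) and whose left column is $(d(f(\bar a),f(\bar b),f(\bar s)),f(\bar e))$. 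Invoking $C(\mu,\mu;0)$ forces the desired identity.

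With $\theta$ established as a congruence below $\mu$, the conclusion is quick. Picking $t_0 \in T \setminus S$ and $s_0 := s_{t_0}$, we have $s_0 \ne t_0$ and $d(s_0,t_0,s_0) = -t_0 \in T$ (by the subgroup property applied to $t_0 \in T \cap s_0/\mu$), so $(s_0,t_0) \in \theta \setminus 0_A$; minimality of $\mu$ gives $\theta = \mu$. Hence for every $a \in A$, $(a,s_a) \in \theta$ yields $d(a,s_a,s_a) \in T$; and the weak difference term identity $d(b,a,a) = b$ applied to $(s_a,a) \in \mu$ gives $d(a,s_a,s_a) = a$. Therefore $a \in T$, so $T = A$, and $S$ is a maximal proper subuniverse of $\m a$.
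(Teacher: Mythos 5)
Your proof is correct, but it takes a genuinely different route from the paper's. The paper fixes a single element $a \in A\setminus S$ and shows directly that $\sg(S\cup\{a\})=A$: it forms the reflexive subuniverse of $\m a^2$ generated by $(a,\pi(a))$ together with the diagonal, identifies it with $\mu$ via Lemma~\ref{lm:maltsev} and minimality, writes an arbitrary $b$ as $t(a,\tup c)$ with $(b,\pi(b))=(t(a,\tup c),t(\pi(a),\tup c))$, and then applies the term condition (Definition~\ref{df:cent}\eqref{dfcent:it3}) once, to the single equation $t(\pi(a),\tup c)=t(\pi(a),\tup u)$, to replace $\pi(a)$ by $a$ and land $b$ in $\sg(S\cup\{a\})$. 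You instead start from an arbitrary subalgebra $T$ properly containing $S$ and build the ``relative difference'' relation $\theta=\{(a,b)\in\mu : d(a,b,s_a)\in T\}$; the technical heart of your argument is the commutation identity $d(f(\bar a),f(\bar b),f(\bar s))=f(\bar e)$, which you prove by exhibiting explicit $(\mu,\mu)$-matrices via $d$-combinations of generators and then invoking $C(\mu,\mu;0)$ in its column form --- in effect you establish the general fact that term operations act affinely on abelian $\mu$-blocks, rather than the one instance the paper needs. Both arguments rest on the same three pillars (minimality of $\mu$, Lemma~\ref{lm:maltsev}, and the term condition); the paper's is shorter because it needs only one ad hoc application of centrality, while yours isolates a reusable linearity statement at the cost of the matrix computation (and your step obtaining $d(s_0,t_0,s_0)\in T$ needs no subgroup argument at all, since $s_0,t_0\in T$ and $d$ is a term). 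One trivial point you should add for completeness: the conclusion ``maximal \emph{proper} subuniverse'' also requires $S\ne A$, which holds because $\mu\ne 0_A$, so some $\mu$-class meets $A\setminus S$.
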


Some proofs in Sections~\ref{sec:taylor} and~\ref{sec:central}
use tame congruence theory, of which we will
give only a cursory overview.  
Given a finite algebra $\m a$, to each pair $(\alpha,\beta)$ of congruences 
of $\m a$ with $\alpha\prec
\beta$, the theory assigns one of five ``types" from
$\{\tup 1,\tup 2,\tup 3,\tup 4,\tup 5\}$.
The theory also defines ``$(\alpha,\beta)$-minimal sets," which are special 
subsets of the universe $A$, and ``$(\alpha,\beta)$-traces," which are sets of
the form $U\cap C$ where $U$ is an $(\alpha,\beta)$-minimal set, $C$ is a $\beta$-class,
and $(U\cap C)^2\nsubseteq \alpha$.
Most of what we will need is contained in the following two results.

\begin{prp}[\cite{tct}] \label{prp:tct}
Suppose $\m a$ is a finite algebra and $\alpha,\beta \in \Con(\m a)$ with
$\alpha \prec \beta$.
\begin{enumerate}
\item \label{tct:it1}
There exists an $(\alpha,\beta)$-minimal set.
\item \label{tct:it2}
$\beta/\alpha$ is nonabelian if and only if $\typ(\alpha,\beta)\in \{\tup 3,\tup 4,\tup 5\}$.
\item \label{tct:it3}
Suppose $\beta/\alpha$ is nonabelian and $U$ is an $(\alpha,\beta)$-minimal set.
Then there exists a unique
$\beta$-class $C$ such that $U\cap C$ is an $(\alpha,\beta)$-trace.
Moreover, letting $N:=U\cap C$, there exist $1 \in N$,
a unary polynomial $e(x) \in \Pol_1(\m b)$, and a binary polynomial $p(x,y) \in
\Pol_2(\m a)$ satisfying:
\begin{enumerate}
\item
$(N\setminus \{1\})^2 \subseteq \alpha$.
\item
$e(A)=U$ and $e(x)=x$ for all $x \in U$.
\item
$p(x,1)=p(1,x)=p(x,x)=x$ for all $x \in U$.
\item
$p(x,o) \stackrel{\alpha}{\equiv} p(o,x) \stackrel\alpha\equiv x$ for all
$x \in U\setminus \{1\}$ and all $o \in N\setminus \{1\}$.
\end{enumerate}
\end{enumerate}
\end{prp}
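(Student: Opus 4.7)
The plan is to follow the standard development of tame congruence theory, building part~(1) first, then the type classification in~(2), and finally the structural conclusions in~(3). Since $\m a$ is finite and the argument is local to the interval $[\alpha,\beta]$, the hypothesis $\alpha\prec\beta$ will be used throughout to guarantee that nothing collapses to $\alpha$.

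For part~(\ref{tct:it1}), I would consider the family $\scrF$ of all $f \in \Pol_1(\m a)$ with $(f\times f)(\beta) \nsubseteq \alpha$, which is nonempty because the identity map belongs to it. Since $A$ is finite, I pick $f\in\scrF$ whose image $f(A)$ has minimum cardinality, then replace $f$ by a suitable iterate $f^n$ (every self-map of a finite set has an idempotent power in the monoid it generates) to arrange that $f\circ f=f$. Setting $U:=f(A)$ gives a set of minimum cardinality among polynomial images that separate $\beta$ from $\alpha$, which is precisely an $(\alpha,\beta)$-minimal set.

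For part~(\ref{tct:it2}), I pass to the induced algebra on an $(\alpha,\beta)$-trace $N$ modulo $\alpha$. By the minimality chosen in (\ref{tct:it1}), every non-constant unary polynomial of $\m a$ that fixes $N$ setwise acts as a permutation on $N/\alpha$, so the induced algebra on $N/\alpha$ is a \emph{minimal algebra} in the sense of Hobby--McKenzie. Pálfy's classification then shows that any minimal algebra is polynomially equivalent to one of five basic structures (unary $G$-set, one-dimensional vector space, two-element Boolean algebra, lattice, or semilattice), yielding the five types $\tup 1,\ldots,\tup 5$. To match this with the external condition on $\beta/\alpha$, I would verify that the two abelian-type cases ($\tup 1,\tup 2$) correspond exactly to $C(\beta,\beta;\alpha)$ holding in $\m a$, by chasing matrices from Definition~\ref{df:cent} into and out of the trace via the idempotent $e$ and the polynomial-equivalence of the quotient.

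For part~(\ref{tct:it3}), assume nonabelian type. Uniqueness of the $\beta$-class $C$ with $U\cap C$ a trace follows from a connectivity argument on the graph whose vertices are the $\beta$-classes meeting $U$ and whose edges come from non-trivial traces: in types $\tup 3,\tup 4,\tup 5$, the polynomial-minimality of $U$ forces this graph to have exactly one nontrivial edge inside a single $\beta$-class. The element $1$ and the polynomial $e$ are then the absorbing element of the two-element reduct and the idempotent from part~(\ref{tct:it1}), respectively. To produce $p$, I would take the meet or join operation on $\{1/\alpha, o/\alpha\}$ supplied by Pálfy's classification, realize it by some binary polynomial $q(x,y)$ of $\m a$, and then normalize: compose with $e$, substitute $1$ in appropriate places, and use the idempotence of the meet/join to enforce $p(x,1)=p(1,x)=p(x,x)=x$ on $U$ and the absorption condition modulo $\alpha$.

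The hard part will be part~(\ref{tct:it2}): Pálfy's classification of minimal algebras is a deep polynomial-algebra theorem and forms the technical heart of tame congruence theory, and aligning internal polynomial behaviour with the external centralizer condition $C(\beta,\beta;\alpha)$ requires careful bookkeeping through the matrix criterion in Definition~\ref{df:cent}\eqref{dfcent:it1}. The structural polishing in~(\ref{tct:it3}) is largely mechanical once the classification and minimal-set machinery are in hand.
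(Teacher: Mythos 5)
First, note that the paper does not actually prove this proposition: it is quoted verbatim from the Hobby--McKenzie monograph, and the ``proof'' in the paper is a list of citations (Theorem 5.7, Theorem 2.8(2), Lemmas 2.13(3), 4.15, 4.17 of \cite{tct}). So the relevant question is whether your sketch would stand on its own, and it would not. You explicitly defer the heart of part~(\ref{tct:it2}) -- P\'alfy's classification of minimal algebras -- and you also quietly assume the other deep ingredient of the theory: the tameness of prime quotients in finite algebras. Your construction of the idempotent $e$ in part~(\ref{tct:it1}) is where this surfaces: from $f(\beta)\nsubseteq\alpha$ with $|f(A)|$ minimal it does \emph{not} follow that an idempotent iterate $f^n$ still separates $\beta$ from $\alpha$; the standard argument that a minimal set is the range of an idempotent polynomial (needed for (3)(b)) is exactly \cite[Theorem 2.8]{tct} and rests on the nontrivial fact that $(\alpha,\beta)$ is a tame quotient. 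Existence of a minimal set itself is trivial by finiteness, but the statement you actually use later is the stronger one.

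Part~(\ref{tct:it3}) has the same character. The uniqueness of the $\beta$-class carrying a trace, the two-element (mod $\alpha$) structure of that trace, and above all the existence of a ``pseudo-meet'' $p$ satisfying the \emph{exact} equalities $p(x,1)=p(1,x)=p(x,x)=x$ on all of $U$ (tail included, not merely modulo $\alpha$) together with the absorption $p(x,o)\stackrel{\alpha}{\equiv}p(o,x)\stackrel{\alpha}{\equiv}x$ are precisely the content of \cite[Lemmas 2.13(3), 4.15, 4.17]{tct}. Your proposed route -- a connectivity argument on $\beta$-classes meeting $U$, then ``normalizing'' a meet/join realized on the two-element trace by composing with $e$ and substituting $1$ -- is a fair description of the spirit of those lemmas, but as written it does not deliver the exact identities on the whole minimal set; getting from a polynomial that behaves correctly on $N/\alpha$ to one satisfying (c) on all of $U$ is a genuine argument (iteration/twin-polynomial tricks specific to types $\tup 3,\tup 4,\tup 5$), not mere bookkeeping. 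In short: as a roadmap your outline is faithful to the standard development, but as a proof it has real gaps in (1)/(3) and defers (2) entirely; the honest move here, and the one the paper makes, is simply to cite the monograph.
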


\begin{proof}
\eqref{tct:it1} follows from \cite[Theorem 5.7(1) and Theorem 2.8(2)]{tct}.
\eqref{tct:it2} follows from \cite[Theorem 5.7(1,2)]{tct}.
\eqref{tct:it2} can be deduced from \cite[Theorem 5.7(1) and Lemmas 2.13(3), 
4.15 and 4.17]{tct}.
\end{proof}

A variety is \emph{locally finite} if its finitely generated algebras are
all finite.  
In particular, $\HSP(\m a)$ is locally finite whenever
$\m a$ is finite.  
A locally finite variety 
\emph{omits type $\tup i$} if no finite algebra in the variety has a
pair of congruences $\alpha\prec \beta$ with $\typ(\alpha,\beta)= \tup i$.

\begin{thm} \label{thm:locfin}
For a locally finite variety $\calV$, the following are equivalent:
\begin{enumerate}
\item \label{locfin:it1}
$\calV$ has a Taylor term.
\item \label{locfin:it2}
$\calV$ has a WNU term.
\item \label{locfin:it3}
$\calV$ has a weak difference term.
\item \label{locfin:it4}
$\calV$ omits type $\tup 1$.
\end{enumerate}
\end{thm}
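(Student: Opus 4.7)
My plan is to establish the chain $(2)\Rightarrow(1)\Rightarrow(4)\Rightarrow(2)$ and, separately, the equivalence $(3)\Leftrightarrow(4)$; together these give the four equivalences.

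The implication $(2)\Rightarrow(1)$ I would handle by direct unpacking. If $w(x_1,\ldots,x_n)$ is a WNU term with $n>1$, then for each $i\in[n]$ I can pair up two of the WNU identities to exhibit a Taylor identity at coordinate $i$: for $i\ge 2$ pair $w(y,x,\ldots,x)$ with the term in which $y$ occupies position $i$; for $i=1$ pair $w(y,x,\ldots,x)$ with $w(x,y,x,\ldots,x)$. In each case the variables in coordinate $i$ are $y$ and $x$, and all other coordinates are $x$ on both sides, as required.

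For $(1)\Rightarrow(4)$ I would appeal to the Hobby--McKenzie theorem \cite[Ch.~9]{tct}: in a locally finite variety, the existence of a Taylor term is equivalent to omission of type $\tup 1$. The reverse implication $(4)\Rightarrow(2)$ is the theorem of Mar\'oti and McKenzie, producing a uniform WNU term in any locally finite variety omitting type $\tup 1$; I expect this step to be the main obstacle, since it does not reduce to routine tame-congruence manipulations and relies on a delicate clone-theoretic/compactness argument on the free algebras of $\calV$.

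For $(3)\Leftrightarrow(4)$, due (in various forms) to Kearnes and Kiss, I would argue $(3)\Rightarrow(4)$ by contradiction: if $\calV$ has a weak difference term $d$ but some finite $\m a\in\calV$ carries a covering pair $\alpha\prec\beta$ of type $\tup 1$, then Proposition~\ref{prp:tct}\eqref{tct:it2} makes $\beta/\alpha$ abelian, so the weak difference property forces $d$ to behave as a Maltsev operation modulo $\alpha$ on every $(\alpha,\beta)$-trace $N$. Composing $d$ with the idempotent retraction onto an $(\alpha,\beta)$-minimal set yields a ternary polynomial of the induced minimal algebra, which by the essentially-unary character of type $\tup 1$ depends modulo $\alpha$ on at most one of its three arguments; together with idempotency this collapses $N$ into a single $\alpha$-class, contradicting the definition of a trace. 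For $(4)\Rightarrow(3)$ I would take the WNU $w$ produced by $(4)\Rightarrow(2)$ and combine it with the abelian-group structure on abelian quotients furnished by Lemma~\ref{lm:gumm} to manufacture a ternary term $d$ satisfying \eqref{eq:wdt} modulo every abelian congruence.
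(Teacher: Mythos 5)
Your overall architecture is reasonable, and most of it matches what the paper actually does, which is simply to combine citations: Hobby--McKenzie \cite[Theorem 9.6]{tct} for the Taylor/type-$\tup 1$ equivalence, Mar\'oti--McKenzie \cite[Theorem 2.2]{maroti-mckenzie} for producing the WNU, and Kearnes--Szendrei \cite[Theorem 4.8]{kearnes-szendrei} for the weak difference term. Your elementary verification that a WNU term is a Taylor term is fine, and your tame-congruence sketch of $(3)\Rightarrow(4)$ is essentially the standard argument (type $\tup 1$ gives an abelian prime quotient whose induced minimal algebra is essentially unary, which is incompatible with a polynomial acting as a Maltsev operation modulo $\alpha$ on a trace with at least two $\alpha$-classes).

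The genuine gap is your $(4)\Rightarrow(3)$ step. You propose to ``manufacture'' a weak difference term by combining the Mar\'oti--McKenzie WNU with the abelian-group structure furnished by Lemma~\ref{lm:gumm}, but that lemma \emph{presupposes} that the algebra already has a weak difference term $d$; using it to build $d$ is circular. There is no known routine construction of a weak difference term from a WNU term: the statement that a locally finite variety omitting type $\tup 1$ has a weak difference term is precisely the nontrivial content of \cite[Theorem 4.8]{kearnes-szendrei} (cited in the paper's proof), whose argument works with minimal sets, traces and pseudo-Maltsev polynomials rather than with any a priori group structure on abelian quotients. To repair your proof you should either cite that result for $(4)\Rightarrow(3)$, as the paper does, or reproduce its construction; the sentence you wrote does not supply it.
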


\begin{proof}
This follows by combining \cite[Theorem 9.6]{tct},
\cite[Corollary 5.3]{taylor}, \cite[Theorem 2.2]{maroti-mckenzie},
and \cite[Theorem 4.8]{kearnes-szendrei}.
\end{proof}

\section{Similarity in varieties with a weak difference term}
\label{sec:sim}

In this section we list the definitions and results about similarity from 
\cite{similar} which we will use in Section~\ref{sec:central}.

\begin{df} \label{df:A(mu)}
Suppose $\m a$ is an algebra and $\theta,\alpha \in \Con(\m a)$
with $\theta\leq\alpha$.
\begin{enumerate}
\item \label{A(mu):it1}
$\m a(\theta)$ denotes $\theta$ viewed as a subalgebra of $\m a^2$.
\item \label{A(mu):it3}
$\Delta_{\theta,\alpha}$ denotes
the congruence of $\m a(\theta)$ generated by $\set{((a,a),(b,b))}{$(a,b) \in 
\alpha$}$.
\end{enumerate}
\end{df}

\begin{lm} \label{lm:Deltaprop}
Let $\m a$ be an algebra and $\delta,\theta,\alpha \in \Con(\m a)$
with $\theta\leq\alpha$ and $C(\alpha,\theta;\delta)$.  Then for all
$((a,a'),(b,b')) \in \Delta_{\theta,\alpha}$,
$(a,a') \in \delta \iff (b,b')\in \delta$.
\end{lm}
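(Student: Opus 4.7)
The plan is to reduce the statement to a direct application of the centralizer hypothesis $C(\alpha,\theta;\delta)$ by unfolding $\Delta_{\theta,\alpha}$ via Maltsev's theorem on generation of congruences. Applying Maltsev's theorem inside $\m a(\theta)$ to the generating set $X:=\{((c,c),(d,d)):(c,d)\in\alpha\}$, any pair $((a,a'),(b,b'))\in \Delta_{\theta,\alpha}$ sits at the endpoints of a finite chain $(a,a')=S_0,S_1,\ldots,S_m=(b,b')$ where each consecutive pair has the form $\{S_i,S_{i+1}\}=\{Q_i((c_i,c_i)),\,Q_i((d_i,d_i))\}$ for some unary polynomial $Q_i$ of $\m a(\theta)$ and some $(c_i,d_i)\in\alpha$ (allowing the generator or its reverse). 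Induction on $i$ then reduces the lemma to: for each single step, $S_i\in\delta \iff S_{i+1}\in\delta$.

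To analyze one such step, I would observe that because $\m a(\theta)$ is the subalgebra of $\m a^2$ with universe $\theta$ and its basic operations act coordinatewise, every unary polynomial $Q$ of $\m a(\theta)$ has the form $Q((x,y))=(T(x,\tup e),T(y,\tup{e'}))$ for some $(1+k)$-ary term $T(z,\tup w)$ of $\m a$ and parameters $(e_j,e'_j)\in\theta$ $(j\in[k])$. Plugging the generator pair $(c,c)$ vs.\ $(d,d)$ with $(c,d)\in\alpha$ into this $Q_i$ yields
\[
S_i=(T(c,\tup e),T(c,\tup{e'})),\qquad S_{i+1}=(T(d,\tup e),T(d,\tup{e'})),
\]
so the conditions $S_i\in\delta$ and $S_{i+1}\in\delta$ translate into $T(c,\tup e)\stackequiv{\delta}T(c,\tup{e'})$ and $T(d,\tup e)\stackequiv{\delta}T(d,\tup{e'})$, respectively. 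The centralizer hypothesis, applied in the form of Definition~\ref{df:cent}\eqref{dfcent:it3} with the term $T$, the $\alpha$-pair $(c,d)$, and the $\theta$-pairs $(e_j,e'_j)$, gives the forward implication; the reverse follows by the same definition applied to $(d,c)\in\alpha$. Both directions together yield $S_i\in\delta \iff S_{i+1}\in\delta$, and iterating along the chain finishes the argument.

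The only subtle point will be correctly identifying the form of a unary polynomial of $\m a(\theta)$: the same underlying term of $\m a$ acts in both coordinates but possibly with distinct, necessarily $\theta$-related, parameters. Once this observation is in hand, the lemma is essentially a direct translation of the centrality condition, with Maltsev's theorem providing the bridge from a single-step instance to arbitrary pairs in $\Delta_{\theta,\alpha}$.
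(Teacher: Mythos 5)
Your proposal is correct and takes essentially the same route as the paper: the paper quotes \cite[Lemma 4.4(1)]{similar} to describe $\Delta_{\theta,\alpha}$ as the transitive closure of the pairs of columns of $(\theta,\alpha)$-matrices and then applies the column form of centrality (Definition~\ref{df:cent}\eqref{dfcent:it2}), whereas you inline that description by running Maltsev's congruence-generation lemma in $\m a(\theta)$ and apply the equivalent term-condition form (Definition~\ref{df:cent}\eqref{dfcent:it3}). Your identification of unary polynomials of $\m a(\theta)$ as acting by the same term in both coordinates with $\theta$-related parameters is exactly the point that makes each single step an instance of the centralizer hypothesis, so there is no gap.
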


\begin{proof}
$\Delta_{\theta,\alpha}$ is the transitive closure of
$\set{((a,a'),(b,b'))}{${\scriptsize{\begin{pmatrix}a\phantom{'}&b\phantom{'}\\a'&b'\end{pmatrix}}} \in M(\theta,
\alpha)$}$ (see e.g.\ \cite[\Ponelmfacesitone]{similar}).  Hence the result
follows by $C(\alpha,\theta;\delta)$ and 
Definition~\ref{df:cent}\eqref{dfcent:it2}.
\end{proof}

We are mainly interested in $\Delta_{\theta,\alpha}$ when 
$\theta$ is abelian and $\alpha=(0:\theta)$.  In this context we use the
following notation.

\begin{df} \label{df:D(A,theta)}
Suppose $\m a$ is an algebra,
$\theta \in \Con(\m a)$ is abelian, and $\alpha=(0:\theta)$.
\begin{enumerate}
\item
$\baralpha$ denotes the set 
$\set{((a,a'),(b,b')) \in A(\theta)^2}{$(a,b) \in \alpha$}$, which
is a congruence of $\m a(\theta)$ satisfying $\Delta_{\theta,\alpha}\leq \baralpha$.
\item
$D(\m a,\theta)$ denotes the quotient algebra $\m a(\theta)/\Delta_{\theta,\alpha}$.
\end{enumerate}
\end{df}

\begin{thm} [\mbox{\cite[\PonecorDA]{similar}}] \label{cor:D(A)}
Suppose $\m a$ belongs to a variety with a weak difference term and
$\theta$ is an abelian minimal congruence of $\m a$.  Let
$\alpha=(0:\theta)$ and $\Dmon = \baralpha/\Delta_{\theta,\alpha}$.
Then $D(\m a,\theta)$ is subdirectly irreducible with abelian monolith $\Dmon$.
Moreover, $(0:\Dmon)=\Dmon$, 
and there exist a surjective homomorphism
$h:\m a(\theta) \ra D(\m a,\theta)$, 
an isomorphism $h:\m a/\alpha \cong D(\m a,\theta)/\Dmon$,
and a subuniverse $\Do\leq D(\m a,\theta)$ such that:
\begin{enumerate}
\item \label{corD(A):it1}
$\Do$ is a transversal for $\Dmon$.
\item \label{corD(A):it2}
$h^{-1}(\Do) = 0_A$.
\item \label{corD(A):it3}
For all $(a,b) \in \theta$,
$h(a,b)/\Dmon = h^\ast(a/\alpha)$.
\end{enumerate}
\end{thm}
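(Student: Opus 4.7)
The plan is to construct $h$, $\Do$, and $h^\ast$ explicitly, then verify the listed conclusions in sequence, using the Second Isomorphism Theorem and Lemma~\ref{lm:Deltaprop} as the main tools. Throughout I use that $\theta \leq (0:\theta) = \alpha$ (since $\theta$ is abelian) and that $C(\alpha,\theta;0)$ holds by definition of $\alpha$, so Lemma~\ref{lm:Deltaprop} applies with $\delta=0$.

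I take $h: \m a(\theta) \to D(\m a,\theta)$ to be the canonical quotient map, automatically surjective. To obtain $h^\ast$, observe that $\pi: \m a(\theta) \to \m a/\alpha$ defined by $(a,a') \mapsto a/\alpha$ is a well-defined surjective homomorphism (well-defined because $\theta \leq \alpha$) whose kernel is precisely $\baralpha$. By the Second Isomorphism Theorem applied to $\Delta_{\theta,\alpha}\leq \baralpha$, I obtain $D(\m a,\theta)/\Dmon \cong \m a(\theta)/\baralpha \cong \m a/\alpha$; let $h^\ast$ be the inverse of this composition. Property~\eqref{corD(A):it3} is then immediate from the construction: $h(a,b)/\Dmon$ corresponds under the quotient map to $\pi(a,b) = a/\alpha$.

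Next I set $\Do := h(0_A)$; since $0_A$ is a subuniverse of $\m a(\theta)$, $\Do$ is a subuniverse of $D(\m a,\theta)$. For property~\eqref{corD(A):it2}, the inclusion $0_A \subseteq h^{-1}(\Do)$ is clear; conversely, if $h(a,a') = h(b,b)$ then $((a,a'),(b,b)) \in \Delta_{\theta,\alpha}$, and Lemma~\ref{lm:Deltaprop} forces $a = a'$. For property~\eqref{corD(A):it1}, the $\Dmon$-class of each $h(a,a')$ contains $h(a,a) \in \Do$ (since $((a,a),(a,a'))\in\baralpha$), and conversely if $h(a,a)\Dmon h(b,b)$, then $(a,b) \in \alpha$, which by the very definition of $\Delta_{\theta,\alpha}$ yields $((a,a),(b,b))\in \Delta_{\theta,\alpha}$ and hence $h(a,a) = h(b,b)$.

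The hardest part is establishing the first sentence of the theorem: that $D(\m a,\theta)$ is subdirectly irreducible with abelian monolith $\Dmon$ satisfying $(0:\Dmon)=\Dmon$. That $\Dmon > 0$ follows by choosing $(a,b) \in \theta$ with $a\ne b$ and noting that $((a,a),(a,b)) \in \baralpha\setminus \Delta_{\theta,\alpha}$ by Lemma~\ref{lm:Deltaprop}. For subdirect irreducibility, one must show every congruence of $\m a(\theta)$ strictly above $\Delta_{\theta,\alpha}$ contains $\baralpha$; this is where the minimality of $\theta$ in $\Con(\m a)$ must be invoked, together with the parameterization of $\Delta_{\theta,\alpha}$-classes by abelian-group ``differences'' within $\theta$-classes furnished by Lemma~\ref{lm:gumm}. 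Abelianness of $\Dmon$ should reduce, via a transfer argument, to the defining property $C(\alpha,\theta;0)$. Finally, $(0:\Dmon) \geq \Dmon$ by abelianness, and the reverse inequality follows because any congruence of $D(\m a,\theta)$ strictly above $\Dmon$ corresponds under $h^\ast$ to a congruence of $\m a$ strictly above $\alpha = (0:\theta)$, and so by definition of $\alpha$ cannot centralize $\theta$ modulo $0$ in $\m a$; transferring this failure back to $D(\m a,\theta)$ shows it cannot centralize $\Dmon$ modulo $0$ either. I expect this SI analysis, especially the careful handling of $\Delta_{\theta,\alpha}$-classes and the transfer of centralizer conditions, to be the main technical obstacle.
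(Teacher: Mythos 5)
Your construction of $h$, $h^\ast$ and $\Do$ is correct and is the natural one: taking $h$ to be the quotient map, getting $h^\ast$ from the Second Isomorphism Theorem applied to $\Delta_{\theta,\alpha}\leq\baralpha=\ker\bigl((a,a')\mapsto a/\alpha\bigr)$, setting $\Do=h(0_A)$, and verifying (1)--(3) and $\Dmon>0$ via Lemma~\ref{lm:Deltaprop} with $\delta=0$ (legitimate, since $C(\alpha,\theta;0)$ holds by definition of $\alpha=(0:\theta)$). Note, however, that the present paper does not prove this statement at all; it imports it from the companion paper as \cite[Corollary 4.8]{similar}, so what you are attempting is a reproof of that imported result.

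As a proof, though, there is a genuine gap: everything in the first sentence of the theorem --- that $D(\m a,\theta)$ is subdirectly irreducible, that its monolith is exactly $\Dmon$, that $\Dmon$ is abelian, and that $(0:\Dmon)=\Dmon$ --- is only announced, not proved. These are the substantive claims; the parts you verified are the routine bookkeeping around them. Concretely: (i) subdirect irreducibility requires showing that every congruence of $\m a(\theta)$ strictly above $\Delta_{\theta,\alpha}$ contains $\baralpha$, and saying that ``minimality of $\theta$ together with Lemma~\ref{lm:gumm} must be invoked'' is not an argument --- one has to show how an arbitrary pair in some congruence $\gamma>\Delta_{\theta,\alpha}$ can be moved, using the weak difference term and the group structure it induces on $\theta$-classes (and the centralizer condition $C(\alpha,\theta;0)$), to generate all of $\baralpha$ modulo $\Delta_{\theta,\alpha}$; this is where the real work lies and where minimality of $\theta$ actually enters. (ii) Abelianness of $\Dmon$ is likewise left as ``a transfer argument'' with no indication of how $(\Dmon,\Dmon)$-matrices in $D(\m a,\theta)$ are to be pulled back to $(\theta,\alpha)$-data in $\m a$. (iii) For $(0:\Dmon)\leq\Dmon$, your sketch takes a congruence $\beta$ of $D(\m a,\theta)$ strictly above $\Dmon$, passes to the corresponding $\beta'>\alpha$ in $\m a$, notes $C(\beta',\theta;0)$ fails, and then says ``transferring this failure back'' shows $C(\beta,\Dmon;0)$ fails; but this transfer is not automatic, since $\m a$ and $D(\m a,\theta)$ are different algebras --- one must convert a failing term instance $t(a,\tup c)=t(a,\tup d)$, $t(b,\tup c)\neq t(b,\tup d)$ in $\m a$ into a failing instance for $\Dmon$ in $D(\m a,\theta)$, which requires a precise description of how $\theta$-pairs $(c,d)$ map to $\Dmon$-pairs $(h(c,c),h(c,d))$ and why distinctness survives passage modulo $\Delta_{\theta,\alpha}$ (again Lemma~\ref{lm:Deltaprop}, but applied inside an argument you have not written). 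Until (i)--(iii) are carried out, the proposal establishes only the auxiliary clauses of the theorem, not its core.
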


\begin{df} \label{df:D(A)}
Suppose $\calV$ is a variety with a weak difference term, and
$\m a \in \calV$ is subdirectly irreducible with monolith $\mu$.  The algebra $D(\m a)$ is
defined as follows:
\begin{enumerate}
\item
If $\mu$ is nonabelian, then $D(\m a)=\m a$.
\item
If $\mu$ is abelian, then $D(\m a)=D(\m a,\mu)$ as defined in Definition~\ref{df:D(A,theta)}.
\end{enumerate}
\end{df}

\begin{df} \label{df:similar}
Suppose $\calV$ is a variety with a weak difference term, and
$\m a,\m b \in \calV$ are subdirectly irreducible.  We say that $\m a$ and $\m b$ are
\emph{similar}, and write $\m a\sim \m b$, if $D(\m a)\cong D(\m b)$.
\end{df}

The following definition from \cite{similar} was motivated by Zhuk's bridges
\cite{zhuk2020}.

\begin{df} \label{df:bridge}
Suppose $\m a,\m b$
are subdirectly irreducible algebras in a common signature with monoliths
$\mu,\montwo$ respectively.  A \emph{\proper\ bridge} from $\m a$ to $\m b$
is a subuniverse $T\leq\m a\times \m a\times \m b\times \m b$ satisfying
\begin{enumerate}[label=(B\arabic*)]
\item \label{bridge:it1}
$\proj_{1,2}(T)=\mu$ and $\proj_{3,4}(T)=\montwo$.
\item \label{bridge:it2}
For all $(a_1,a_2,b_1,b_2) \in T$ we have $a_1=a_2$ if and only if $b_1=b_2$.
\item \label{bridge:it3}
For all $(a_1,a_2,b_1,b_2)\in T$ we have $(a_i,a_i,b_i,b_i) \in T$ for $i=1,2$.
\end{enumerate}
\end{df}

\begin{thm} [\mbox{\cite[\Ponethmpersp]{similar}}] \label{thm:persp}
Suppose $\calV$ is a variety with a weak difference term, and
$\m a,\m b\in \calV$ are subdirectly irreducible.
The following are equivalent:
\begin{enumerate}
\item \label{persp:it1}
$\m a \sim \m b$.
\item \label{persp:it2}
There exist an algebra $\m c \in \calV$,
surjective homomorphisms $f_1:\m c\ra \m a$ and $f_2:\m c\ra \m b$,
 and congruences 
$\psi,\tau \in \Con(\m C)$ with $\psi<\tau$, such that, letting $\delta_i = \ker(f_i)$ and 
letting $\coverdelta_i$
denote the unique upper cover of $\delta_i$ in $\Con(\m c)$ for $i=1,2$, we have
$(\delta_1,\coverdelta_1) \searrow (\psi,\tau) \nearrow (\delta_2,\coverdelta_2)$.

\item \label{persp:it4}
There exists a \proper\ bridge from $\m a$ to $\m b$.
\end{enumerate}
\end{thm}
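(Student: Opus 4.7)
The plan is to prove the three implications in the cycle $(2) \Rightarrow (3) \Rightarrow (1) \Rightarrow (2)$. The first is a direct construction; the remaining two leverage the $D(\m a)$ machinery from Theorem~\ref{cor:D(A)}.

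For $(2) \Rightarrow (3)$, given the configuration $(\m c, f_1, f_2, \psi, \tau)$ I would set
\[
T = \{(f_1(c), f_1(c'), f_2(c), f_2(c')) : (c, c') \in \tau\},
\]
which is a subuniverse of $\m a^2 \times \m b^2$ as the image of the subuniverse $\tau \leq \m c^2$ under a coordinatewise homomorphism. Axiom (B1) follows from the Correspondence Theorem applied to the surjection $f_i$: the image $(f_i \times f_i)(\tau)$ corresponds to $(\tau \vee \delta_i)/\delta_i = \coverdelta_i/\delta_i$, which is the monolith of $\m a$ or $\m b$. Axiom (B2) uses $\tau \wedge \delta_1 = \tau \wedge \delta_2 = \psi$: for $(c, c') \in \tau$, $f_1(c) = f_1(c')$ iff $(c, c') \in \psi$ iff $f_2(c) = f_2(c')$. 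Axiom (B3) is immediate from reflexivity of $\tau$ applied to $c$ and $c'$ individually.

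For $(3) \Rightarrow (1)$ and $(1) \Rightarrow (2)$, I split into cases by the abelianness of the monoliths. In the nonabelian case, $D(\m a) = \m a$ and $D(\m b) = \m b$ by Definition~\ref{df:D(A)}, so similarity amounts to isomorphism: the bridge axioms together with tame congruence theory (Proposition~\ref{prp:tct}) applied to $(0, \mu)$- and $(0, \montwo)$-minimal sets force $\m a \cong \m b$, and the converse (from an isomorphism, produce a bridge) is direct. In the abelian case, view $T$ as a subuniverse of $\m a(\mu) \times \m b(\montwo)$ via (B1); condition (B2) combined with Lemma~\ref{lm:Deltaprop} applied to the centralizers $\alpha = (0:\mu)$ and $\beta = (0:\montwo)$ shows that $T$ descends to a well-defined bijection between the quotients $\m a(\mu)/\Delta_{\mu,\alpha} = D(\m a)$ and $\m b(\montwo)/\Delta_{\montwo,\beta} = D(\m b)$. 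Conversely, given an isomorphism $\phi: D(\m a) \to D(\m b)$, I would construct $\m c$ as a fiber product of $\m a(\mu)$ and $\m b(\montwo)$ over $\phi$ inside $\m a^2 \times \m b^2$, with $f_1 = \proj_1, f_2 = \proj_3$; the congruences $\psi, \tau$ come from the $\Delta$-structures on each side, and the transpose configuration $(\delta_1, \coverdelta_1) \searrow (\psi, \tau) \nearrow (\delta_2, \coverdelta_2)$ follows from the explicit structural data in Theorem~\ref{cor:D(A)}.

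The main obstacle is the abelian case in both $(3) \Rightarrow (1)$ and $(1) \Rightarrow (2)$, where one must translate between a bridge (a subuniverse of $\m a^2 \times \m b^2$) and an isomorphism of the $D$-quotients. Verifying that the descended correspondence is a bona fide isomorphism requires careful bookkeeping of the centralizer and the $\Delta$-classes; in particular, the abelian-group structure on $\mu$-classes provided by Lemma~\ref{lm:gumm} is needed to identify the $\Delta$-quotient with the expected algebra, and the centralizer compatibility from Lemma~\ref{lm:Deltaprop} is needed for well-definedness of the bijection at the $\Delta$-quotient level.
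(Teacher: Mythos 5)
The statement you were asked to prove is not actually proved in this paper --- it is imported from \cite[Theorem 7.8]{similar} --- so your proposal has to stand on its own, and as written it has genuine gaps. The most serious one: the theorem carries no finiteness hypothesis --- $\calV$ is an arbitrary variety with a weak difference term and $\m a,\m b$ are possibly infinite subdirectly irreducible algebras --- yet your nonabelian case of (3)$\,\Rightarrow\,$(1) rests on Proposition~\ref{prp:tct}, i.e.\ tame congruence theory, which is available only for finite algebras. (The TCT-based arguments in this paper, Theorem~\ref{lm:newsametype} and Lemma~\ref{lm:sametype}, are deliberately confined to finite algebras in locally finite Taylor varieties.) In the stated generality you would need a commutator-theoretic substitute concerning nonabelian minimal congruences in weak-difference-term varieties, and no such argument is sketched; this is a step that fails, not a detail to be filled in.

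There are further gaps. In (2)$\,\Rightarrow\,$(3), your verification of (B1) misapplies the Correspondence Theorem: $\tau$ need not contain $\delta_i$, so $(f_i\times f_i)(\tau)$ is only a reflexive, symmetric, compatible subrelation of the monolith whose \emph{transitive closure} is $\coverdelta_i/\delta_i$, whereas (B1) demands equality with the monolith. When the monolith is abelian this is repairable (the image is a congruence by Lemma~\ref{lm:maltsev} since the weak difference term is Maltsev on monolith classes, and then minimality of the monolith finishes), but your sketch gives no reason the image is transitively closed in the nonabelian case, so (B1) is unverified exactly where the argument is hardest. Finally, the abelian halves of (3)$\,\Rightarrow\,$(1) and (1)$\,\Rightarrow\,$(2) are asserted rather than proved: (B2) together with Lemma~\ref{lm:Deltaprop} only says that $\Delta$-related pairs agree on being diagonal; it does not show that $T$ descends to a well-defined bijective homomorphism $D(\m a)\to D(\m b)$. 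For that you must show the kernel of the relation induced on $\m a(\mu)$ is contained in $\Delta_{\mu,\alpha}$ and that the bridge matches $(0:\mu)$ with $(0:\kappa)$ --- compare the amount of work needed in Theorem~\ref{thm:samebridge} even in the finite setting. Likewise, in the fiber-product construction for (1)$\,\Rightarrow\,$(2), the perspectivities $(\delta_1,\coverdelta_1)\searrow(\psi,\tau)\nearrow(\delta_2,\coverdelta_2)$ are precisely the content of that implication and are nowhere verified.
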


\begin{cor} [\mbox{\cite[\Ponecorsimprop]{similar}}] \label{cor:simprop}
Suppose $\calV$ is a variety with a weak difference term, and
$\m a\in \calV$ is subdirectly irreducible with abelian monolith $\mu$.
Setting $\alpha = (0:\mu)$ and $\Delta = \Delta_{\mu,\alpha}$, the set 
\[
\TD a := \set{(a,b,(a,e)/\Delta,(b,e)/\Delta)}{$a\stackrel{\mu}{\equiv}
b \stackrel{\mu}{\equiv}e$}
\]
is a \proper\ bridge from $\m a$ to $D(\m a)$.
\end{cor}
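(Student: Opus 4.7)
The plan is to verify, in turn, that $\TD a$ is a subuniverse of
$\m a^2\times D(\m a)^2$ and that the three bridge
conditions~\ref{bridge:it1}--\ref{bridge:it3} hold, saving the
two substantive steps for last.  Closure under operations is
routine: applying any basic operation coordinatewise to tuples
in $\TD a$ produces a tuple of the same form, since $\mu$ is a
congruence of $\m a$ and $\Delta$ is a congruence of $\m a(\mu)$.
Condition~\ref{bridge:it3} follows by specialising $b=a$ (or
$a=b$) while keeping $e$.  The inclusion
$\mu\subseteq\proj_{1,2}(\TD a)$ is obtained by taking $e=a$,
while the opposite containment is immediate.  Similarly
$\proj_{3,4}(\TD a)\subseteq\Dmon$: from $a\mu b$ we get
$(a,b)\in\alpha$, so $((a,e),(b,e))\in\baralpha$ and its image
in $D(\m a)^2$ lies in $\baralpha/\Delta=\Dmon$.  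The easy
direction of~\ref{bridge:it2} is trivial.

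For the nontrivial direction of~\ref{bridge:it2}, assume
$(a,e)/\Delta=(b,e)/\Delta$ with $a\mu b\mu e$, and let $d$ be
the weak difference term for $\calV$.  Since $\Dmon$ is abelian,
Lemma~\ref{lm:gumm} gives $d^{D(\m a)}(x,x,y)=y$ for all
$(x,y)\in\Dmon$; setting $x=(a,e)/\Delta=(b,e)/\Delta$ and
$y=(e,e)/\Delta$ and computing $d$ coordinatewise gives
\[
(e,e)/\Delta
=d^{D(\m a)}\bigl((a,e)/\Delta,(b,e)/\Delta,(e,e)/\Delta\bigr)
=\bigl(d(a,b,e),e\bigr)/\Delta.
\]
Hence $(d(a,b,e),e)/\Delta\in\Do$, and
Theorem~\ref{cor:D(A)}\eqref{corD(A):it2} forces $d(a,b,e)=e$.
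Because $a,b,e$ all lie in the abelian $\mu$-class of $e$,
Lemma~\ref{lm:gumm} (applied to $\m a$) then gives
$d(a,b,e)=a-b+e$ in $\Grp a(\mu,e)$, so $a=b$.

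For the remaining inclusion $\Dmon\subseteq\proj_{3,4}(\TD a)$,
set $R:=\proj_{3,4}(\TD a)$.  Then $R$ is a reflexive subuniverse
of $D(\m a)^2$ contained in $\Dmon$.  Because $\Dmon$ is abelian,
the weak difference term of $D(\m a)$ satisfies $d(p,p,q)=q$ and
$d(p,q,q)=p$ for every $(p,q)\in R$, so Lemma~\ref{lm:maltsev}
promotes $R$ to a congruence of $D(\m a)$.  Since $\Dmon$ is the
monolith of $D(\m a)$ (Theorem~\ref{cor:D(A)}) we conclude
$R\in\{0_{D(\m a)},\Dmon\}$.  The alternative $R=0_{D(\m a)}$
would force $(a,e)/\Delta=(b,e)/\Delta$ for every $a\mu b\mu e$,
which by the injectivity from the previous paragraph collapses
every $\mu$-class to a singleton, contradicting $\mu\neq 0_A$.

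The hard part is this last paragraph: rather than exhibiting,
for each pair $(d_1,d_2)\in\Dmon$, a concrete triple $(a,b,e)$
with $a\mu b\mu e$ that realises it (which is awkward because
natural second-coordinate representatives of $d_1$ and $d_2$
need not share a common $\mu$-class), one observes that the
Maltsev behaviour of $d$ on abelian monolith pairs makes $R$ a
congruence inside $\Dmon$; the minimality of $\Dmon$ together
with the injectivity already proved for~\ref{bridge:it2} then
finishes the argument.
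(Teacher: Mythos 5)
Your verification is essentially correct, but note that this paper states Corollary~\ref{cor:simprop} without proof, importing it from \cite[Corollary 7.9]{similar}, so there is no in-paper argument to compare against; judged on its own, your proof is sound and uses only facts recorded here. One citation should be tightened: to pass from $(d(a,b,e),e)/\Delta=(e,e)/\Delta$ to $d(a,b,e)=e$ you invoke Theorem~\ref{cor:D(A)}\eqref{corD(A):it2}, but that step tacitly reads $h$ as the canonical quotient map $\m a(\mu)\ra D(\m a)$ and $\Do$ as the image of the diagonal, whereas the theorem as stated only asserts the existence of \emph{some} surjection $h$ and transversal $\Do$ with $h^{-1}(\Do)=0_A$; in particular neither $(e,e)/\Delta\in\Do$ nor $h(x,y)=(x,y)/\Delta$ is literally given. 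The fact you actually need --- that the diagonal $0_A$ is a union of $\Delta$-classes --- is exactly Lemma~\ref{lm:Deltaprop} with $\delta=0_A$, which applies because $C(\alpha,\mu;0_A)$ holds by the definition of $\alpha=(0:\mu)$; citing that lemma makes the step airtight without changing anything else. The rest goes through: the coordinatewise computation with the weak difference term (the needed hypothesis $((a,e)/\Delta,(e,e)/\Delta)\in\Dmon$ is immediate from $\mu\leq\alpha$), the Gumm-group conclusion $a=b$, and, notably, your final step, where instead of exhibiting representatives of an arbitrary pair in $\Dmon$ you observe that $\proj_{3,4}(\TD a)$ is a reflexive subuniverse contained in the abelian monolith, hence a congruence by Lemma~\ref{lm:maltsev}, hence $0$ or $\Dmon$ by minimality, and the alternative $0$ contradicts the injectivity already established for condition (B2). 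That is a clean and economical way to obtain $\proj_{3,4}(\TD a)=\Dmon$, and it sidesteps precisely the representative-matching difficulty you identify.
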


\section{Zhuk's bridges} \label{sec:bridge}

In his solution to the 
Constraint Satisfaction Problem Dichotomy Conjecture \cite{zhuk2020}, D. Zhuk
defined and used to great effect certain relations which he called ``bridges."
In this section we present Zhuk's bridges, in slightly greater generality
than Zhuk's original setting.

\begin{df} \label{df:saturated}
Let $\m a$ be an algebra and $\rho \in \Con(\m a)$.
\begin{enumerate}
\item
A subuniverse $R\leq \m a^2$ is said to be \emph{stable under}
$\rho$ \cite{zhuk2020}, or $\rho$-\emph{saturated} \cite{kearnes-szendrei-parallel}, 
or $\rho$-\emph{closed} \cite{tct},
if $R=\rho\circ R\circ \rho$.
\item
More generally, if $\m a_1,\ldots,\m a_n$ are algebras in the same signature
as $\m a$, $R\leq \m a_1\times \cdots \times \m a_n$, $i \in [n]$, and $\m a_i=\m a$, then $R$ is \emph{stable under $\rho$ in coordinate $i$} if
$(a_1,\ldots,a_i,\ldots,a_n) \in R$ and $(a_i,b_i) \in \rho$ imply
$(a_1,\ldots,b_i,\ldots,a_n) \in R$.
\end{enumerate}
\end{df}

\begin{df}
If $\m a$ is an algebra, then $\Conp(\m a)$ denotes
$\Con(\m a)\setminus \{1_A\}$.
\end{df}

\begin{df} \label{df:pbridge}
Let $\m a,\m b$ be finite algebras in a common signature,
$\rho \in \Conp(\m a)$, and $\sigma \in \Conp(\m b)$.
A \emph{bridge from $(\m a,\rho)$ to $(\m b,\sigma)$} is
a subuniverse $T\leq \m a\times \m a \times \m b \times \m b$
satisfying
\begin{enumerate}[label=(B\arabic*${}^\ast$)]
\setcounter{enumi}{-1}
\item \label{pbridge:it0}
$T$ is stable under $\rho$ in its first two coordinates and stable
under $\sigma$ in its last two coordinates.
\item \label{pbridge:it1}
$\rho \subset \proj_{1,2}(T)$ and $\sigma \subset \proj_{3,4}(T)$.
\item \label{pbridge:it2}
For all $(a_1,a_2,b_1,b_2) \in T$ we have 
$(a_1,a_2)\in \rho\iff (b_1,b_2) \in \sigma$.
\end{enumerate}
If $T$ is a bridge from $(\m a,\rho)$ to $(\m b,\sigma)$, then we call
$L:=\proj_{1,2}(T)$ and $R:=\proj_{3,4}(T)$ the \emph{left} and \emph{right anchors}
of $T$, and say that $T$ is a bridge \emph{from $(\m a,\rho,L)$ to
$(\m b,\sigma,R)$.}
\end{df}

Note that if $T$ is a bridge from $(\m a,\rho)$ to $(\m b,\sigma)$, then the
left and right anchors of $T$ are stable under $\rho$ and $\sigma$ respectively.
Also note that
condition~\ref{pbridge:it0} allows for 
``faithfully modding out by $\rho$ and $\sigma$."  That is, if
$T,\rho,\sigma$ satisfy
\ref{pbridge:it0}, then setting $\overline{\m a}:= \m a/\rho$, 
$\overline{\m b}:= \m b/\sigma$, and
$\overline{T} = \set{(a/\rho,\,a'/\rho,\,b/\sigma,\,b'/\sigma)}{$
(a,a',b,b') \in T$}$,
we get that
$T$ is a bridge from  $(\m a,\rho)$ to $(\m b,\sigma)$ if and only
if $\overline{T}$ is a bridge from $(\overline{\m a},0_{\overline{A}})$
to $(\overline{\m b},0_{\overline{B}})$.
Moreover, $T$ is recoverable from $\overline{T}$, namely, as
the pre-image under the natural map
$\m a^2\times \m b^2 \ra (\overline{\m a})^2\times(\overline{\m b})^2$.

\begin{df} \label{df:bridgeops}
Suppose $\m a,\m b,\m c$ are finite algebras in a common signature
and $\rho \in \Conp(\m a)$, $\sigma \in \Conp(\m b)$, and
$\upsilon \in \Conp(\m c)$.
Let $\rho \subset L\leq \m a^2$ with $L$ stable under $\rho$,
let $T$ be a bridge from $(\m a,\rho)$ to $(\m b,\sigma)$,
and let $T'$ be a bridge from $(\m b,\sigma)$ to $(\m c,\upsilon)$.
\begin{enumerate}
\item
The \emph{identity bridge} for $(\m a,\rho,L)$ is the
relation 
\[
I_{(\m a,\rho,L)} = \set{(a_1,a_2,b_1,b_2) \in A^4}{$(a_1,a_2),(b_1,b_2)\in L$ and
$(a_1,b_1),(a_2,b_2) \in \rho$}.
\]
\item
The \emph{converse} of $T$ is the subuniverse $T^\cup \leq
\m b\times \m b\times \m a\times \m a$ given by
\[
T^\cup = \set{(b_1,b_2,a_1,a_2)}{$(a_1,a_2,b_1,b_2) \in T$}.
\]
\item \label{bridgeops:it3}
The \emph{composition} $T\circ T'$ is the subuniverse
$T\circ T'\leq\m a\times \m a\times \m c\times \m c$ given by
\[
T\circ T' = \set{(a_1,a_2,c_1,c_2)}{$\exists b_1,b_2\in B$ with
$(a_1,a_2,b_1,b_2)\in T$ and $(b_1,b_2,c_1,c_2) \in T'$}.
\]
\end{enumerate}
\end{df}

It is easy to check that in the context of Definition~\ref{df:bridgeops},
the identity bridge $I_{(\m a,\rho,L)}$ is a bridge from $(\m a,\rho)$
to itself with both anchors equal to $L$, and
the converse $T^\cup$ is a bridge from $(\m b,\sigma)$ to $(\m a,\rho)$
whose left and right anchors are the right and left anchors, respectively,
of $T$.
The composition $T\circ T'$ however need not be a bridge from
$(\m a,\rho)$ to $(\m c,\upsilon)$, as it will satisfy~\ref{pbridge:it1}
only when the intersection of the right anchor of $T$ with the left
anchor of $T'$ properly contains $\sigma$.

An important invariant of a bridge is its ``trace."

\begin{df} \label{df:trace}
Suppose $T$ is a bridge from $(\m a,\rho)$ to $(\m b,\sigma)$.
\begin{enumerate}
\item
The \emph{trace} of $T$, denoted $\tr(T)$,
is the subuniverse of $\m a\times \m b$ defined by
$\tr(T)=\set{(a,b)}{$(a,a,b,b)\in T$}$.
\item
When $\m b=\m a$, we say that $T$ is \emph{reflexive} if
$0_A\subseteq \tr(T)$.
\end{enumerate}
\end{df}

Zhuk \cite{zhuk2020} denoted $\tr(T)$ by $\widetilde{T}$ (and did not
call it a ``trace").
One can check that
$\tr(I_{(\m a,\rho,L)})=\rho$, $\tr(T^\cup) =  \tr(T)^{-1}$,
and $\tr(T\circ T')=\tr(T)\circ \tr(T')$.

The next two definitions (not from \cite{zhuk2020}) will help us articulate
a key fact about bridges: they can be restricted to ``minimal" anchors without
affecting their trace.

\begin{df}
Let $\m a$ be a finite algebra and $\rho \in \Conp(\m a)$.
$\genCov(\rho)$ denotes the set of minimal (under inclusion) 
$\rho$-saturated subuniverses of $\m a^2$ properly containing $\rho$.
\end{df}

\begin{df}
A bridge from $(\m a,\rho)$ to $(\m b,\sigma)$ is \emph{compact}
if its left anchor is in $\genCov(\rho)$ and its right anchor is in
$\genCov(\sigma)$.
\end{df}

\begin{lm} 
\label{lm:goodbridge}
Suppose $\m a,\m b$ are finite algebras in a common signature,
$\rho \in \Conp(\m a)$, $\sigma \in \Conp(\m b)$, and $T$ is a bridge
from $(\m a,\rho)$ to $(\m b,\sigma)$.
For every $L' \in \genCov(\rho)$ with $L'\subseteq \proj_{1,2}(T)$ there exists
a compact bridge $T'\subseteq T$ from $(\m a,\rho)$ to $(\m b,\sigma)$ with
$\proj_{1,2}(T')=L'$ and $\tr(T')=\tr(T)$.
\end{lm}

\begin{proof} 
Define $T_1 = \set{(a_1,a_2,b_1,b_2)\in T}{$(a_1,a_2)\in L'$}$.
As $L'\subseteq \proj_{1,2}(T)$, we get
$L'=\proj_{1,2}(T_1)$.  In particular, there exists 
$(a_1,a_2,b_1,b_2) \in T_1$ with $(a_1,a_2)\not\in \rho$.
By property \ref{pbridge:it2}, this implies $(b_1,b_2)\not\in \sigma$.
It can be checked that
$T_1$ is a bridge from $(\m a,\rho)$ to
$(\m b,\sigma)$ with $\tr(T_1)=\tr(T)$ and $\proj_{1,2}(T_1)=L'$, and clearly
$T_1\subseteq T$.

Because $\proj_{3,4}(T_1)$ is $\sigma$-saturated and properly contains $\sigma$,
we can pick $R' \in \genCov(\sigma)$ with $R' \subseteq \proj_{34}(T_1)$.
Let $T'= \set{(a_1,a_2,b_1,b_2) \in T_1}{$(b_1,b_2) \in R'$}$.
An argument like the one in the previous paragraph shows 
that $T'$ is a bridge from $(\m a,\rho)$ to $(\m b,\sigma)$
with $\tr(T')=\tr(T_1)$, $\proj_{3,4}(T')=R'$, and $T'\subseteq T_1$.
Since $\rho \subset \proj_{1,2}(T')\subseteq \proj_{1,2}(T_1)=L'$, 
$\proj_{1,2}(T')$ is $\rho$-saturated, and
$L' \in \genCov(\rho)$, we get $\proj_{1,2}(T')=L'$
and $T'$ is compact.
\end{proof}

\begin{df} \label{df:ref.tr}
Given a finite algebra $\m a$, $\rho \in \Conp(\m a)$,
and $L \in \genCov(\rho)$, we let $\reftr(\rho,L)$ denote the set 
\[
\reftr(\rho,L)=\set{\tr(T)}{$T$ is a reflexive bridge from $(\m a,\rho,L)$ to
$(\m a,\rho,L)$}.
\]
\end{df}

Note that each member of $\reftr(\rho,L)$ is a subuniverse of $\m a^2$ containing $\rho$,
by \ref{pbridge:it0}.  
In addition, $\reftr(\rho,L)$ is nonempty and closed under inversion and composition by the comments 
following Definitions~\ref{df:bridgeops} and~\ref{df:trace}.
It follows from this and finiteness that $\reftr(\rho,R)$ contains a unique maximal member;
and this unique maximal member is a congruence containing $\rho$.
The following definition and lemma record this observation.

\begin{df}\label{df:opt}
If $\m a$ is a finite algebra, $\rho \in \Conp(\m a)$, and 
$L \in \genCov(\rho)$, then
 $\Opt(\rho,L)$ denotes the unique maximal member of $\reftr(\rho,L)$.
\end{df}

\begin{lm}  \label{lm:opt}
For $\m a,\rho,L$ as in Definition~\ref{df:opt},
we have $\rho\leq \Opt(\rho,L) \in \Con(\m a)$.
\end{lm}

In his proof of the CSP Dichotomy Theorem \cite{zhuk2020},
Zhuk only needed to consider bridges between congruences
$\rho,\sigma$ for which $|\genCov(\rho)|=|\genCov(\sigma)|=1$.

\begin{df} [Zhuk \cite{zhuk2020}] \label{df:irred}
Let $\m a$ be a finite algebra and $\rho \in \Conp(\m a)$.
\begin{enumerate}
\item
$\rho$ is \emph{irreducible} if $|\genCov(\rho)|=1$.
\item
If $\rho$ is irreducible, then $\rho^\ast$ denotes the unique 
member of $\genCov(\rho)$.
\end{enumerate}
\end{df}

The following fact is easily proved.

\begin{lm} \label{lm:meetirred}
If $\m a$ is finite and $\rho \in \Con(\m a)$ is irreducible, then $\rho$ is meet-irreducible
in $\Con(\m a)$; its unique upper cover $\coverrho$ in $\Con(\m a)$ is the 
transitive closure of $\rho^\ast$.
\end{lm}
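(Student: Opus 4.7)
My plan is to handle the two claims in sequence, exploiting a key observation: every congruence $\theta \in \Con(\m a)$ with $\theta \geq \rho$ is automatically $\rho$-saturated, because reflexivity of $\rho$ gives $\theta \subseteq \rho \circ \theta \circ \rho$ and transitivity of $\theta$ gives $\rho \circ \theta \circ \rho \subseteq \theta \circ \theta \circ \theta = \theta$.

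Given this observation, meet-irreducibility is immediate: if $\rho = \alpha \wedge \beta$ for congruences $\alpha,\beta \in \Con(\m a)$ with $\alpha,\beta > \rho$, then $\alpha$ and $\beta$ would be two $\rho$-saturated subuniverses of $\m a^2$, each properly containing $\rho$, whose intersection is $\rho$, contradicting irreducibility of $\rho$.

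To identify $\coverrho$ with the transitive closure $\tau$ of $\btrho$, I would first establish that $\btrho$ is symmetric. The relation $(\btrho)^{-1}$ is a subuniverse of $\m a^2$, remains $\rho$-saturated (since $\rho$ itself is symmetric), and properly contains $\rho = \rho^{-1}$; minimality of $\btrho$ then forces $\btrho \subseteq (\btrho)^{-1}$, and taking converses yields equality. Consequently $\tau$ is reflexive, symmetric, transitive, and a subuniverse of $\m a^2$ (being the finite union of the relational compositions $\btrho, \btrho \circ \btrho, \ldots$, each of which is a subuniverse of $\m a^2$), hence a congruence. Since $\btrho \supsetneq \rho$, I have $\tau > \rho$, so $\tau \geq \coverrho$ by definition of the upper cover. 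For the reverse inclusion, note that $\coverrho$ is itself a $\rho$-saturated subuniverse of $\m a^2$ properly containing $\rho$, so minimality of $\btrho$ gives $\btrho \subseteq \coverrho$; transitivity of $\coverrho$ then yields $\tau \subseteq \coverrho$, and so $\tau = \coverrho$.

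No step looks like a serious obstacle: the entire argument is a matter of unpacking the definitions of $\rho$-saturation, irreducibility, and $\btrho$, combined with finiteness of $\m a$ (which guarantees the upper cover $\coverrho$ exists and that the transitive closure of $\btrho$ stabilizes in finitely many steps).
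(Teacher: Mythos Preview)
Your proof is correct. The paper does not actually supply a proof of this lemma, merely stating that ``the following fact is easily proved,'' so there is nothing to compare against; your argument fills the gap cleanly. One small point of exposition: when you say $\tau$ is a subuniverse ``being the finite union of the relational compositions $\btrho, \btrho \circ \btrho, \ldots$,'' note that a union of subuniverses is not generally a subuniverse---the real reason this works is that reflexivity of $\btrho$ makes the chain $\btrho \subseteq \btrho \circ \btrho \subseteq \cdots$ increasing, so by finiteness it stabilizes at some $(\btrho)^{\circ k}$, which is itself a subuniverse. You clearly have this in mind, but it is worth stating explicitly.
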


\begin{df} \label{df:optimal}
[Zhuk \mbox{\cite{zhuk2020}}]
Suppose $\m a$ is a finite algebra and $\rho\in \Con(\m a)$ is irreducible.
\begin{enumerate}
\item
 $\Opt(\rho)$ denotes $\Opt(\rho,\rho^\ast)$.
\item
A bridge $T$ from $(\m a,\rho)$ to $(\m a,\rho)$ is 
\emph{optimal} if $\tr(T)=\Opt(\rho)$.
\end{enumerate}
\end{df}

Zhuk defined $\Opt(\rho)$ in a slightly different way: he let $\Opt(\rho)$ be the
unique maximal member of the set 
\[
\set{\tr(T)}{$T$ is a reflexive bridge from $(\m a,\rho)$ to
$(\m a,\rho)$}.
\]
In fact, this set is identical to $\reftr(\rho,\rho^\ast)$ by Lemma~\ref{lm:goodbridge}, so 
Zhuk's and our definitions of $\Opt(\rho)$ are equivalent.

The following fact can be extracted from Zhuk's proof of \cite[Corollary 7.24.1]{zhuk2020}.

\begin{lm}  \label{lm:modiso}
Suppose $T$ is a compact bridge from $(\m a,\rho)$ to $(\m b,\sigma)$.
Let $L=\proj_{1,2}(T)$, $R=\proj_{3,4}(T)$, 
$\alpha = \Opt(\rho,L)$, and $\beta = \Opt(\sigma,R)$.
Then $\tr(T)$ induces an isomorphism $\gamma:{\m a/\alpha} \cong
\m b/\beta$ defined by
\[
\gamma(a/\alpha)=b/\beta \iff 
(a,b) \in \alpha \circ \tr(T) \circ \beta.
\]
\end{lm}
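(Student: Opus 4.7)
The plan is to recognize $U := \alpha \circ \tr(T) \circ \beta$ itself as the trace of a single bridge from $(\m a,\rho,R)$ to $(\m b,\sigma,S)$, and then to read the desired isomorphism $\gamma$ off from $U$. Since $\alpha = \Opt(\rho,R) \in \reftr(\rho,R)$, I can fix a reflexive bridge $T_\alpha$ from $(\m a,\rho,R)$ to itself with $\tr(T_\alpha)=\alpha$, and similarly a reflexive bridge $T_\beta$ from $(\m b,\sigma,S)$ to itself with $\tr(T_\beta)=\beta$. The composite $T' := T_\alpha \circ T \circ T_\beta$ is then a bridge from $(\m a,\rho,R)$ to $(\m b,\sigma,S)$, and the trace identity $\tr(T\circ T')=\tr(T)\circ \tr(T')$ noted after Definition~\ref{df:trace} gives $\tr(T') = U$.

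Before the main step I will need the totality facts $\proj_1(\tr(T)) = A$ and $\proj_2(\tr(T)) = B$, which then transfer to $U$ because $\alpha$ and $\beta$ are reflexive. For the first: given $a \in A$, use $(a,a) \in 0_A \subseteq R = \proj_{1,2}(T)$ to pick a tuple $(a,a,b_1,b_2) \in T$; property \ref{pbridge:it2} forces $(b_1,b_2) \in \sigma$, and then \ref{pbridge:it0} allows me to replace $b_2$ by $b_1$ while staying in $T$, yielding $(a,b_1) \in \tr(T)$. The second is dual, via $\proj_{3,4}(T) = S \supseteq 0_B$.

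The heart of the argument---and the step I expect to be the main obstacle---is the pair of inclusions $U^{-1}\circ U \subseteq \beta$ and $U\circ U^{-1} \subseteq \alpha$. The key observation is that $(T')^\cup \circ T'$ is a bridge from $(\m b,\sigma,S)$ to itself with trace $U^{-1} \circ U$, and it is reflexive precisely because $\proj_2(U) = B$; hence $U^{-1} \circ U \in \reftr(\sigma,S)$, and maximality of $\Opt(\sigma,S)$ then forces $U^{-1} \circ U \subseteq \beta$. The symmetric argument applied to $T' \circ (T')^\cup$ yields $U \circ U^{-1} \subseteq \alpha$. Together with $\alpha \circ U = U = U \circ \beta$ and totality on both sides, these inclusions imply that $U$ descends to the graph of a bijection $\gamma : \m a/\alpha \to \m b/\beta$ characterized exactly as in the statement; since $U$ is a subuniverse of $\m a \times \m b$ and $\alpha,\beta$ are congruences, $\graph(\gamma)$ is a subuniverse of $(\m a/\alpha)\times(\m b/\beta)$, so $\gamma$ is a homomorphism, and applying the same reasoning to $U^{-1}$ shows $\gamma^{-1}$ is a homomorphism, completing the isomorphism.
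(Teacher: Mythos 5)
Your proof is correct and uses essentially the same mechanism as the paper's: compose $T$ with optimal reflexive bridges and with its converse, apply the trace-composition identity, and invoke maximality of $\Opt$ to bound the resulting reflexive traces by $\alpha$ and $\beta$. The only difference is one of packaging — you work globally with $U=\alpha\circ\tr(T)\circ\beta$ and spell out totality and the homomorphism property, which the paper's terser proof (proving only $(a,a')\in\alpha\iff(b,b')\in\beta$ for pairs in $\tr(T)$) leaves implicit.
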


\begin{proof}
Suppose $(a,b),(a',b') \in \tr(T)$.  It suffices to show
$(a,a')\in \alpha\iff(b,b')\in \beta$.
Assume $(b,b')\in \beta$.  
Let $T_0$ be a reflexive bridge from $(\m b,\sigma,R)$ to itself satisfying
$\tr(T_0)=\beta$.
Let $T_1 = T \circ T_0$ and $T_2 = T_1\circ T_1^\cup$.
Then $(a,b'),(a',b') \in \tr(T)\circ\beta = \tr(T_1)$ and thus $(a,a') \in 
\tr(T_1)\circ \tr(T_1)^{-1}=\tr(T_2)$.
Since $T_2$ is a reflexive bridge from $(\m a,\rho,L)$
to itself, we get $\tr(T_2)\subseteq \Opt(\rho,L)=\alpha$,
so $(a,a') \in \alpha$.  
Thus we have proved $(b,b')\in \beta$ implies $(a,a') \in \alpha$.
A similar proof shows the opposite implication.
\end{proof}

\begin{lm} \label{lm:sameopt-gen}
Suppose $\m a$ is a finite algebra, $\rho,\sigma \in \Conp(\m a)$,
$L \in \genCov(\rho)$, and $R \in \genCov(\sigma)$.
If there exists a reflexive bridge from $(\m a,\rho,L)$ to $(\m a,\sigma,R)$,
then $\Opt(\rho,L)=\Opt(\sigma,R)$.
\end{lm}

\begin{proof}
Let $T_1$ be a reflexive bridge from $(\m a,\rho,L)$ to $(\m a,\sigma,R)$.  
Let $T_2$ be a reflexive bridge from $(\m a,\rho,L)$ to itself satisfying $\tr(T_2)=\Opt(\rho,L)$.  
Let $T_3=T_1^\cup \circ T_2\circ T_1$.  We have
\begin{align*}
\Opt(\rho,L) &= 0_A \circ \Opt(\rho,L)\circ 0_A\\
&\subseteq \tr(T_1^\cup) \circ \tr(T_2) \circ \tr(T_1)\\
&= \tr(T_3)\\
&\subseteq \Opt(\sigma,R)
\end{align*}
where the last inclusion is because $T_3$ is a reflexive bridge from $(\m a,\sigma,R)$
to itself.  A symmetric argument shows $\Opt(\sigma,R)\subseteq \Opt(\rho,L)$.
\end{proof}

\begin{df}[cf.\ Zhuk \cite{zhuk2020}] \label{df:adjacent}
Suppose $\m a$ is a finite algebra and $\rho,\sigma \in \Con(\m a)$ are irreducible.
We say that $\rho$ and $\sigma$ are \emph{adjacent} if there exists a reflexive
bridge from $(\m a,\rho)$ to $(\m a,\sigma)$.
\end{df}

\begin{cor}[Zhuk {\cite[Lemma 6.4]{zhuk2020}}] \label{lm:sameopt}
Suppose $\m a$ is a finite algebra and $\rho,\sigma \in \Con(\m a)$ are
irreducible and adjacent.  Then $\Opt(\rho)=\Opt(\sigma)$.
\end{cor}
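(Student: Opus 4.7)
The plan is to reduce the corollary to the more general Lemma~\ref{lm:sameopt-gen} by upgrading an arbitrary reflexive bridge from $(\m a,\rho)$ to $(\m a,\sigma)$ into a reflexive bridge from $(\m a,\rho,\rho^\ast)$ to $(\m a,\sigma,\sigma^\ast)$, using the canonical choice of ``starred'' relation guaranteed by irreducibility.

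First, unpack the assumption that $\rho$ and $\sigma$ are adjacent: by definition this provides a reflexive bridge $T$ from $(\m a,\rho)$ to $(\m a,\sigma)$. By Definition~\ref{df:Zhuk-bridge} there exist subuniverses $R\leq\m a^2$ and $S\leq\m a^2$ such that $T$ is a bridge from $(\m a,\rho,R)$ to $(\m a,\sigma,S)$. Next, because both $\rho$ and $\sigma$ are irreducible, Lemma~\ref{lm:goodbridge} applies and produces a bridge $T'$ from $(\m a,\rho,\rho^\ast)$ to $(\m a,\sigma,\sigma^\ast)$ with $T'\subseteq T$ and $\tr(T')=\tr(T)$.

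The key observation is that $T'$ inherits reflexivity from $T$: since $T$ is reflexive, $0_A\subseteq\tr(T)=\tr(T')$, so $T'$ is a reflexive bridge from $(\m a,\rho,\rho^\ast)$ to $(\m a,\sigma,\sigma^\ast)$. Now Lemma~\ref{lm:sameopt-gen} applied to the anchors $(\m a,\rho,\rho^\ast)$ and $(\m a,\sigma,\sigma^\ast)$ yields
\[
\Opt(\rho)=\Opt(\rho,\rho^\ast)=\Opt(\sigma,\sigma^\ast)=\Opt(\sigma),
\]
where the outer equalities are just the definition of $\Opt(\rho)$ and $\Opt(\sigma)$ for irreducible congruences.

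There is essentially no main obstacle here: all the conceptual work is already done in Lemmas~\ref{lm:goodbridge} and~\ref{lm:sameopt-gen}. The only mild point to verify is that the trace-preserving sub-bridge supplied by Lemma~\ref{lm:goodbridge} carries reflexivity over, but this is immediate from $\tr(T')=\tr(T)$ and the definition of reflexive bridge in Definition~\ref{df:trace}. The proof is therefore a short, direct two-step invocation of the previously established lemmas.
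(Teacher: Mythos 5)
Your proof is correct and follows exactly the same route as the paper: invoke Lemma~\ref{lm:goodbridge} to replace the given reflexive bridge by one from $(\m a,\rho,\rho^\ast)$ to $(\m a,\sigma,\sigma^\ast)$ (reflexivity carries over since the trace is preserved), then apply Lemma~\ref{lm:sameopt-gen}. Your explicit remark that $\tr(T')=\tr(T)$ preserves reflexivity is the only detail the paper leaves implicit.
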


\begin{proof}
Let $T$ be a reflexive bridge from $(\m a,\rho)$ to $(\m a,\sigma)$.  
By Lemma~\ref{lm:goodbridge}, there exists a reflexive bridge
$T'$ from
$(\m a,\rho,\rho^\ast)$ to $(\m a,\sigma,\sigma^\ast)$.
Now apply Lemma~\ref{lm:sameopt-gen}.
\end{proof}

This completes our development of the basic terminology and results concerning bridges
from \cite{zhuk2020}.

\section{\Rooted\ bridges in locally finite Taylor varieties} \label{sec:taylor}

In this short section we use tame congruence theory, first to characterize
irreducible congruences in finite Taylor algebras, and secondly to extend 
Definitions~\ref{df:optimal} and~\ref{df:adjacent} and Corollary~\ref{lm:sameopt} from irreducible congruences to meet-irreducible congruences in
finite Taylor algebras.
One tool we use is a restriction of the concept of ``bridge," which we call
``\rooted\ bridge."

\begin{df} \label{df:Cov}
Suppose $\m a$ is a finite algebra, $\rho \in \Con(\m a)$ is meet-irreducible, and
$\coverrho$ is its unique upper cover.  
$\Cov(\rho)$ denotes $\set{\tau \in \genCov(\rho)}{$\tau \subseteq \coverrho$}$.
\end{df}

If $\rho$ is irreducible, then clearly $\Cov(\rho)=\{\rho^\ast\}$.
However, it can happen that $\rho$ is meet-irreducible and $|\Cov(\rho)|=1$,
yet $\rho$ is not irreducible; see Lemma~\ref{lm:irred}.

\begin{df} \label{df:rooted}
Suppose $\m a$ and $\m b$ are finite algebras in a common signature,
$\rho \in \Con(\m a)$, $\sigma \in \Con(\m b)$,
and $\rho$ and $\sigma$ are meet-irreducible.  A bridge $T$ from $(\m a,\rho)$
to $(\m b,\sigma)$ is \emph{\rooted}\ if there exist $\tau \in \Cov(\rho)$
and $\tau' \in \Cov(\sigma)$ such that $T$ contains a bridge $T'$ from $(\m a,\rho,\tau)$ to
$(\m b,\sigma,\tau')$ with $\tr(T')=\tr(T)$.
\end{df}

Equivalently, a bridge $T$ from $(\m a,\rho)$ to $(\m b,\sigma)$
is \rooted\ if and only if the set 
\[
T_0:=T\cap \set{(a,a',b,b')}{$(a,a')\in\coverrho$ and
$(b,b')\in \coversigma$}
\]
satisfies $\proj_{1,2}(T_0)\ne \rho$,
where $\coverrho,\coversigma$ are the unique upper covers of $\rho,\sigma$
respectively.

Observe that if $\rho\in \Con(\m a)$ and $\sigma \in \Con(\m b)$ are irreducible, 
then every bridge
from $(\m a,\rho)$ to $(\m b,\sigma)$ is \rooted\ by Lemma~\ref{lm:goodbridge}.

Next, we introduce some notation and record two facts about
$\Cov(\rho)$ given by tame congruence theory.

\begin{df} \label{df:barrho}
Suppose $\m a$ is a finite algebra, $\rho \in \Con(\m a)$ is meet-irreducible, and
$\coverrho$ is its unique upper cover.  
Let $\barrho$ denote the following subset of $\coverrho$:
\[
\barrho = \rho \circ
\left(0_A \cup \bigcup\set{N^2}{$N$ is a $(\rho,\coverrho)$-trace}\right)\circ \rho.
\]
\end{df}

\begin{prp}[\mbox{\cite[Lemma 5.24]{tct}}] \label{prp:basictol}
Suppose $\m a$ is a finite algebra, $\rho \in \Con(\m a)$ is meet-irreducible, and
$\coverrho$ is its unique upper cover.  
\begin{enumerate}
\item \label{basictol:it1}
If $\typ(\rho,\coverrho)\in \{\tup 2,\tup 3\}$, then $\barrho \leq \m a^2$
and $\Cov(\rho) = \{\barrho\}$.
\item \label{basictol:it2}
If $\typ(\rho,\coverrho)\in \{\tup 4,\tup 5\}$, then 
$|\Cov(\rho)|=2$, say $\Cov(\rho)=\{\tau_0,\tau_1\}$.  Moreover,
$\taurho_1=\taurho_0^{-1}$, $\taurho_0\cap \taurho_1=\rho$, and $\taurho_0\cup \taurho_1=\barrho$.
 \end{enumerate}
\end{prp}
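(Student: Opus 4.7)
The plan is to reduce to the case $\rho = 0_A$ via the Correspondence Theorem: after replacing $\m a$ by $\m a/\rho$, the $\rho$-saturated subuniverses of $\m a^2$ lying between $\rho$ and $\coverrho$ correspond bijectively to subuniverses of $(\m a/\rho)^2$ lying between $0$ and the monolith $\mu := \coverrho/\rho$, and both the TCT type and the collection of traces are preserved. Under this reduction, it suffices to find the minimal subuniverses $\tau \leq \m a^2$ with $0_A < \tau \leq \mu$, where $\m a$ is subdirectly irreducible with monolith $\mu$ of the given type, and $\barrho$ becomes $0_A \cup \bigcup\{N^2 : N \text{ is a } (0,\mu)\text{-trace}\}$.

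Any such $\tau$ is reflexive and contains an off-diagonal pair $(a,b) \in \mu$. Applying a unary polynomial that collapses $\m a$ onto a $(0,\mu)$-minimal set, I would move $(a,b)$ into a pair $(a',b') \in N \times N$ with $a' \ne b'$, for some trace $N$. For types $\tup 2$ and $\tup 3$, the trace carries a polynomial structure---an affine module in type $\tup 2$ and a polynomially Boolean algebra in type $\tup 3$---symmetric enough that $(a', b')$ generates all of $N^2$ inside $\tau$. A standard polynomial-connectedness argument then propagates this to every trace inside every $\mu$-class, showing $\tau \supseteq \barrho$. Since one verifies directly that $\barrho$ is closed under the basic operations of $\m a$ (this is where the symmetry of the trace structure is essential), this gives $\barrho \in \Cov(\rho)$ and hence $\Cov(\rho) = \{\barrho\}$, proving part~\eqref{basictol:it1}.

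For types $\tup 4$ and $\tup 5$, the polynomial $p$ from Proposition~\ref{prp:tct}\eqref{tct:it3} restricts on each two-element trace $N = \{0_N, 1_N\}$ to a meet-semilattice operation, imposing an intrinsic order on $N$. I would set $\tau_0$ to be the subuniverse of $\m a^2$ generated by $0_A$ together with the oriented pairs $(0_N, 1_N)$ as $N$ ranges over the traces, and set $\tau_1 := \tau_0^{-1}$. Polynomial-connectedness again yields $\tau_0, \tau_1 \subseteq \barrho$ and $\tau_0 \cup \tau_1 = \barrho$. The main obstacle is showing $\tau_0 \cap \tau_1 = 0_A$---equivalently, that no term of $\m a$ applied to a batch of order-preserving pairs produces an order-reversing pair. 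This is exactly the content of the TCT classification of minimal algebras of types $\tup 4$ and $\tup 5$, which excludes any Maltsev-type polynomial on $N$; the cleanest reference is the structure theorem in \cite[Chapter 5]{tct}. Once $\tau_0 \ne \tau_1$ is secured, the minimality of each forces $\Cov(\rho) = \{\tau_0, \tau_1\}$, and the remaining equalities in part~\eqref{basictol:it2} follow immediately.
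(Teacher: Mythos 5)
The paper does not prove this proposition; it is cited directly to Hobby and McKenzie, and your task here is really to reconstruct \cite[Lemma 5.24]{tct}. Your reduction to $\rho=0_A$ via the Correspondence Theorem is correct, and the outline (push any off-diagonal pair into a trace, use the structure of minimal algebras of each type, propagate by polynomial isomorphisms of minimal sets) is the right shape. The problem is that the sketch folds the genuine content of the lemma into two asides that are presented as routine.

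The first is the assertion, in the type $\tup 2/\tup 3$ case, that ``one verifies directly that $\barrho$ is closed under the basic operations of $\m a$.'' This is precisely the hard step, and it is not a quick consequence of ``symmetry of the trace structure.'' If $(a_i,b_i)\in\barrho$ for all $i$ and $f$ is an operation with $f(\tup a)\neq f(\tup b)$, all you get for free is $(f(\tup a),f(\tup b))\in\coverrho\setminus\rho$; concluding that $f(\tup a),f(\tup b)$ lie together in some trace requires the machinery of twin polynomials and the detailed structure theory of $(\rho,\coverrho)$-minimal sets, which is the bulk of the Hobby--McKenzie argument. The same gap reappears in disguise in the type $\tup 4/\tup 5$ case: your $\tau_0$ is defined as a \emph{generated} subuniverse of $\m a^2$, so the claims $\tau_0\subseteq\barrho$ and $\tau_0\cup\tau_1=\barrho$ amount to showing that generation does not spill outside the (non-subuniverse) set $\barrho$; ``polynomial-connectedness'' is not a closure property that delivers this. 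You also need to establish that the semilattice orientations you chose on the individual traces are mutually consistent under the polynomial isomorphisms between minimal sets --- without that, $\tau_0$ as you defined it would already contain an order-reversing pair and collapse into $\barrho$.

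A smaller point: in type $\tup 2$ a trace may have more than two elements, so a single off-diagonal pair $(a',b')\in N^2$ together with the pseudo-Maltsev polynomial only produces pairs whose difference is an integer multiple of $b'-a'$; to reach all of $N^2$ inside $\tau$ you must additionally use that scalar multiplications on $N$ are restrictions of unary polynomials of $\m a$. This is true and fixable, but should be said. For $\tau_0\cap\tau_1=0_A$ your appeal to the nonexistence of a Maltsev-type polynomial on the trace is the correct intuition, but the precise statement you need --- that every polynomial of $\m a$ carrying traces to traces does so monotonically and with a globally consistent orientation --- is itself the nontrivial content you would have to extract from the minimal-algebra structure theory, not a one-line citation.
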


Now we can characterize irreducible congruences in finite Taylor algebras.

\begin{lm} \label{lm:irred}
Suppose $\m a$ is a finite Taylor algebra, 
$\rho \in \Con(\m a)$
is meet-irreducible, and $\coverrho$ is its unique upper cover.  
\begin{enumerate}
\item \label{irred:it2}
The following are equivalent:
\begin{enumerate}
\item \label{irred:it2a}
$\rho$ is irreducible.
\item \label{irred:it2b}
$\typ(\rho,\coverrho)\in \{\tup 2,\tup 3\}$, and for all $(a,b) \in A^2\setminus
\coverrho$ there exists a unary polynomial $f(x) \in \Pol_1(\m a)$ with
$(f(a),f(b)) \in \coverrho\setminus \rho$.
\end{enumerate}

\item\label{irred:it1}
If $\rho$ is irreducible, then $\rho^\ast=\barrho$.

\end{enumerate}
\end{lm}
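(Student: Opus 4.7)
The plan is to prove (a)$\Leftrightarrow$(b), obtaining part (2) as a byproduct of the forward direction. Assume (a) holds. Since $\m a$ is Taylor, Theorem~\ref{thm:locfin} excludes $\typ(\rho,\coverrho) = \tup 1$; if the type were $\tup 4$ or $\tup 5$, Proposition~\ref{prp:basictol}\eqref{basictol:it2} would supply distinct $\rho$-saturated subuniverses $\tau_0, \tau_1 \supsetneq \rho$ with $\tau_0 \cap \tau_1 = \rho$, directly contradicting the irreducibility of $\rho$. Hence $\typ(\rho,\coverrho) \in \{\tup 2, \tup 3\}$ and Proposition~\ref{prp:basictol}\eqref{basictol:it1} gives $\Cov(\rho) = \{\barrho\}$. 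Since $\barrho$ is a $\rho$-saturated subuniverse strictly containing $\rho$ and contained in $\coverrho$, the minimality of $\rho^\ast$ yields $\rho^\ast \subseteq \barrho \subseteq \coverrho$; conversely, $\rho^\ast$ is itself a $\rho$-saturated subuniverse in $(\rho, \coverrho]$, and finiteness together with $\Cov(\rho) = \{\barrho\}$ forces $\barrho \subseteq \rho^\ast$. Thus $\rho^\ast = \barrho$, proving part (2).

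To finish (a)$\Rightarrow$(b), fix $(a,b) \in A^2 \setminus \coverrho$ and set $\tau := \rho \circ \sg^{\m a^2}(\rho \cup \{(a,b)\}) \circ \rho$. A routine check shows $\tau$ is a $\rho$-saturated subuniverse of $\m a^2$ strictly containing $\rho$, so $\barrho = \rho^\ast \subseteq \tau$ by irreducibility. Pick any $(c,d) \in \barrho \setminus \rho$; then $(c,d) \in \coverrho \setminus \rho$, and the membership $(c,d) \in \tau$ supplies $c', d'$ with $(c,c'), (d,d') \in \rho$ and $(c',d') = t^{\m a^2}((p_1,q_1), \ldots, (p_k,q_k))$ where each $(p_i,q_i) \in \rho \cup \{(a,b)\}$. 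Let $I := \{i : (p_i,q_i) = (a,b)\}$; $I$ must be nonempty, for otherwise $(c',d') \in \rho$ and hence $(c,d) \in \rho$. Define the unary polynomial $f(x) := t(u_1, \ldots, u_k)$ with $u_i = x$ for $i \in I$ and $u_i = p_i$ otherwise. A direct computation gives $f(a) = c'$ and $(f(b), d') \in \rho$, so $f(a) \equiv_\rho c$ and $f(b) \equiv_\rho d$; since $(c,d) \in \coverrho \setminus \rho$, this forces $(f(a), f(b)) \in \coverrho \setminus \rho$, as required.

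For (b)$\Rightarrow$(a), it suffices to show every $\rho$-saturated subuniverse $\tau \supsetneq \rho$ of $\m a^2$ contains $\barrho$. If $\tau \subseteq \coverrho$, then finiteness and $\Cov(\rho) = \{\barrho\}$ (Proposition~\ref{prp:basictol}\eqref{basictol:it1}, whose type hypothesis is supplied by (b)) give $\barrho \subseteq \tau$. Otherwise, pick $(a,b) \in \tau \setminus \coverrho$ and apply (b) to produce a unary polynomial $f$ with $(f(a), f(b)) \in \coverrho \setminus \rho$; since $\tau$ is a reflexive subuniverse of $\m a^2$, $(f(a), f(b)) \in \tau$, so $\tau \cap \coverrho$ is a $\rho$-saturated subuniverse strictly between $\rho$ and $\coverrho$, reducing to the first case. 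The main obstacle is the polynomial-extraction step in the previous paragraph: one must carefully split the generators of $(c',d')$ into the $\rho$- and $(a,b)$-kinds and build a single unary polynomial whose evaluations at $a$ and $b$ are $\rho$-related to $c$ and $d$ respectively; the remainder of the argument is bookkeeping via Proposition~\ref{prp:basictol} and Zhuk's definition of irreducibility.
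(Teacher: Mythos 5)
Your proposal is correct and follows essentially the same route as the paper: the type classification via Proposition~\ref{prp:basictol} (with $\Cov(\rho)=\{\barrho\}$ in types $\tup 2,\tup 3$), and the reduction of irreducibility to showing that any $\rho$-saturated subuniverse properly containing $\rho$ meets $\coverrho\setminus\rho$ via the polynomial condition, match the paper's argument. The only differences are cosmetic: you prove the polynomial condition in (a)$\Rightarrow$(b) directly, by extracting a term representation of an element of $\barrho\setminus\rho$ inside the saturation of $\sg^{\m a^2}(\rho\cup\{(a,b)\})$, where the paper argues the contrapositive by intersecting with $\coverrho$ to contradict irreducibility, and you establish $\rho^\ast=\barrho$ in the forward direction rather than as part of the converse.
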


\begin{proof}
Assume $\rho$ is irreducible.  
We have $\typ(\rho,\coverrho)\ne \tup 1$ by Theorem~\ref{thm:locfin}.
Irreducibility of $\rho$ forces $|\Cov(\rho)|=1$, so
$\typ(\rho,\coverrho)\in \{\tup 2,\tup 3\}$ by
Proposition~\ref{prp:basictol}\eqref{basictol:it2}.
Assume next that there exists $(a,b) \in A^2\setminus \coverrho$ such that
$(f(a),f(b)) \not\in \coverrho\setminus \rho$ for all $f \in \Pol_1(\m b)$.
Let $\sigma = \sg^{\m a^2}(\{(a,b)\} \cup 0_A)$.  
The assumption implies $\sigma \cap \coverrho \subseteq
\rho$, so $(\rho \circ \sigma \circ \rho) \cap \coverrho = \rho$, which 
would contradict irreducibility of $\rho$.  Thus if $\rho$ is irreducible,
then the conditions in item~\eqref{irred:it2b} hold.

Conversely, assume that the conditions in item~\eqref{irred:it2b} hold.
Since $\typ(\rho,\coverrho) \in \{\tup 2,\tup 3\}$, we have $\Cov(\rho)=\{\barrho\}$
by Proposition~\ref{prp:basictol}\eqref{basictol:it1}.
We will show that $\rho$ is irreducible with $\rho^\ast=\barrho$, which will
also establish item~\eqref{irred:it1}.
Let $R$ be a $\rho$-saturated subuniverse  of $\m a^2$
which properly contains $\rho$; we must show $\barrho\subseteq R$.
It will suffice to prove $R\cap \coverrho\ne \rho$, as then $R\cap \coverrho$ will be
a $\rho$-saturated subuniverse of $\m a^2$ satisfying $\rho \subset R\cap \coverrho\subseteq 
\coverrho$, so $\barrho\subseteq R\cap \coverrho$ as $\Cov(\rho)=\{\barrho\}$
by Proposition~\ref{prp:basictol}\eqref{basictol:it1}.
To prove $R\cap \coverrho \ne \rho$, pick $(a,b) \in R
\setminus \rho$.  If $(a,b) \in \coverrho$ then 
we are done, so assume $(a,b)\not\in \coverrho$.
By condition~\eqref{irred:it2b}, 
there exists $f \in \Pol_1(\m a)$ with $(a',b') :=(f(a),f(b)) \in \coverrho\setminus \rho$.  
Then $(a',b') \in \sg^{\m a^2}(\{(a,b)\}\cup 0_A) \subseteq R$,
so $(a',b')$ witnesses $R\cap \coverrho\ne \rho$.
\end{proof}

The following easy lemma will help us extend the notions of $\Opt(\rho)$ and adjacency from
irreducible congruences to meet-irreducible congruences.  

\begin{lm} \label{lm:type45}
Suppose $\m a$ is a finite Taylor algebra, 
$\rho\in \Con(\m a)$ is meet-irreducible, and
$\tau,\tau' \in \Cov(\rho)$. Then there exists a reflexive bridge from $(\m a,\rho,\tau)$
to $(\m a,\rho,\tau')$.
Hence $\Opt(\rho,\tau)=\Opt(\rho,\tau')$.
\end{lm}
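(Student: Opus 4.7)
The plan is to reduce to constructing a reflexive bridge from $(\m a,\rho,\tau)$ to $(\m a,\rho,\tau')$, after which Lemma~\ref{lm:sameopt-gen} immediately yields $\Opt(\rho,\tau)=\Opt(\rho,\tau')$. Since $\m a$ is a finite Taylor algebra, Theorem~\ref{thm:locfin} gives $\typ(\rho,\coverrho)\in\{\tup 2,\tup 3,\tup 4,\tup 5\}$. If $\typ(\rho,\coverrho)\in\{\tup 2,\tup 3\}$ then Proposition~\ref{prp:basictol}\eqref{basictol:it1} forces $\tau=\tau'=\barrho$ and the identity bridge $I_{(\m a,\rho,\barrho)}$ does the job; likewise, if $\tau=\tau'$ in any type, the identity bridge suffices. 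So I may assume $\typ(\rho,\coverrho)\in\{\tup 4,\tup 5\}$ with $\tau\ne\tau'$, and by Proposition~\ref{prp:basictol}\eqref{basictol:it2} I may further assume $\tau=\tau_0$ and $\tau'=\tau_1=\tau_0^{-1}$ (the reverse orientation is handled by taking the converse bridge).

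For this core case I propose
\[
T := \set{(a_1,a_2,b_1,b_2)\in A^4}{$(a_1,a_2)\in\tau_0$, $(a_1,b_2)\in\rho$, and $(a_2,b_1)\in\rho$}.
\]
Since $T$ is cut out by subuniverse conditions on three pairs of coordinates ($\tau_0,\rho,\rho\leq\m a^2$), it is automatically a subuniverse of $\m a^4$. The bridge axioms then follow by routine verification: \ref{pbridge:it0} uses only $\rho$-stability of $\tau_0$ together with transitivity of $\rho$; for \ref{pbridge:it1}, $\proj_{1,2}(T)=\tau_0$ is witnessed by $(b_1,b_2)=(a_2,a_1)$, and $\proj_{3,4}(T)=\tau_1$ follows from $(b_2,b_1)\in\rho\circ\tau_0\circ\rho=\tau_0$ (with the reverse inclusion via $(a_1,a_2)=(b_2,b_1)$); and for \ref{pbridge:it2}, transitivity of $\rho$ combined with the linkages $(a_1,b_2),(a_2,b_1)\in\rho$ shows $(a_1,a_2)\in\rho\iff(b_1,b_2)\in\rho$. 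Reflexivity is immediate because $0_A\subseteq\rho\subseteq\tau_0$ forces $(a,a,a,a)\in T$ for every $a\in A$.

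The main obstacle is guessing the shape of $T$. The guiding intuition is that, since $\tau_1=\tau_0^{-1}$, the bridge should essentially ``swap'' the first and last coordinate pairs; the two $\rho$-linkages $(a_1,b_2),(a_2,b_1)\in\rho$ encode exactly this swap up to $\rho$-perturbation while forcing $T$ to be $\rho$-stable. Once $T$ is written down, the verifications above are routine, and the second conclusion of the lemma follows at once from Lemma~\ref{lm:sameopt-gen}.
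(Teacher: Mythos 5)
Your proposal is correct and matches the paper's proof in essence: the paper handles $\tau=\tau'$ with the identity bridge and, for types $\tup 4$/$\tup 5$, uses $T=\{(a_1,a_2,b_1,b_2):(a_1,a_2,b_2,b_1)\in I_{(\m a,\rho,\tau)}\}$, which (since $(b_2,b_1)\in\rho\circ\tau\circ\rho=\tau$ is automatic) is exactly your explicitly defined relation, followed by the same appeal to Lemma~\ref{lm:sameopt-gen}.
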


\begin{proof}
If $\tau'=\tau$, then we can use the identity bridge $I_{(\m a,\rho,\tau)}$.  Otherwise,
by Proposition~\ref{prp:basictol} we must have $\typ(\rho,\coverrho)\in \{\tup 4,\tup 5\}$
and $\tau'=\tau^{-1}$.
Then the set $T=\set{(a_1,a_2,b_1,b_2)}{$(a_1,a_2,b_2,b_1) \in I_{(\m a,\rho,\tau)}$}$
is the required reflexive bridge.  The last claim follows by Lemma~\ref{lm:sameopt-gen}.
\end{proof}

Lemma~\ref{lm:type45} justifies the 
the following extension of the notation $\Opt(\rho)$ from irreducible congruences to
meet-irreducible congruences.

\begin{df}
Suppose $\m a$ is a finite Taylor algebra 
and $\rho\in \Con(\m a)$ is meet-irreducible.
\begin{enumerate}
\item
$\Opt(\rho)$ denotes the (unique) congruence $\Opt(\rho,\tau)$ where $\tau \in \Cov(\rho)$.
\item
A bridge $T$ from $(\m a,\rho)$ to $(\m a,\rho)$ is \emph{optimal} if
it is \rooted\ and $\tr(T)=\Opt(\rho)$.
\end{enumerate}
\end{df}

We also extend the adjacency relation to meet-irreducible congruences, as follows.

\begin{df} \label{df:adjacent2}
Suppose $\m a$ is a finite Taylor algebra 
and $\rho,\sigma\in \Con(\m a)$ are meet-irreducible.
Say that $\rho$ and $\sigma$ are \emph{adjacent} if there exists a reflexive 
\rooted\ bridge 
from $(\m a,\rho)$ to $(\m a,\sigma)$.
\end{df}

\begin{rem}
Definition~\ref{df:adjacent2} extends Definition~\ref{df:adjacent}, since
every bridge between irreducible congruences is \rooted.
Definition~\ref{df:adjacent2} disagrees with Zhuk's definition in 
\cite{zhuk2020} when $\rho,\sigma$ are not irreducible, as
Zhuk does not require the bridge to be \rooted.  
\end{rem}

Corollary~\ref{lm:sameopt} extends to meet-irreducible congruences.  The proof is a simple
application of Lemmas~\ref{lm:sameopt-gen} and~\ref{lm:type45}.

\begin{cor} \label{cor:sameopt}
Suppose $\m a$ is a finite Taylor algebra 
and $\rho,\sigma \in \Con(\m a)$ are meet-irreducible.  If $\rho$ and $\sigma$ are
adjacent, then $\Opt(\rho)=\Opt(\sigma)$.
\end{cor}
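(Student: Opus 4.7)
The plan is to unpack the definitions of adjacency and $\Opt(\rho)$ for meet-irreducible congruences, and then string together Lemmas~\ref{lm:sameopt-gen} and~\ref{lm:type45}. Concretely, assuming $\rho$ and $\sigma$ are adjacent, I would first invoke the definition to pick a reflexive rooted bridge $T$ from $(\m a,\rho)$ to $(\m a,\sigma)$.

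Next I would apply the definition of ``rooted'' to choose $\tau \in \Cov(\rho)$ and $\tau' \in \Cov(\sigma)$ together with a sub-bridge $T' \subseteq T$ which is a bridge from $(\m a,\rho,\tau)$ to $(\m a,\sigma,\tau')$ satisfying $\tr(T') = \tr(T)$. Since $T$ is reflexive, $0_A \subseteq \tr(T) = \tr(T')$, so $T'$ itself is a reflexive bridge between the two anchors $(\m a,\rho,\tau)$ and $(\m a,\sigma,\tau')$. This allows me to invoke Lemma~\ref{lm:sameopt-gen} and conclude $\Opt(\rho,\tau) = \Opt(\sigma,\tau')$.

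Finally, by the extended definition following Lemma~\ref{lm:type45}, $\Opt(\rho) = \Opt(\rho,\tau)$ and $\Opt(\sigma) = \Opt(\sigma,\tau')$ regardless of the choice of covers in $\Cov(\rho)$ and $\Cov(\sigma)$, so combining these identifications with the previous equality yields $\Opt(\rho) = \Opt(\sigma)$, as required.

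There is no real obstacle here: rooting the bridge is precisely what is needed to convert a statement about bridges between pairs $(\m a,\rho)$ and $(\m a,\sigma)$ into a statement about bridges between concrete anchors, and Lemma~\ref{lm:type45} is exactly what lets me ignore which element of $\Cov(\rho)$ or $\Cov(\sigma)$ was chosen. The only thing to verify carefully is that the sub-bridge $T'$ inherits reflexivity from $T$, which is immediate because $\tr(T')=\tr(T)$.
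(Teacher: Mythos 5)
Your proof is correct and follows essentially the same route the paper indicates: unwind adjacency to a reflexive rooted bridge, pass to the contained bridge between the anchors $(\m a,\rho,\tau)$ and $(\m a,\sigma,\tau')$ (noting reflexivity is preserved since the traces agree), and apply Lemma~\ref{lm:sameopt-gen}, with Lemma~\ref{lm:type45} entering only through the well-definedness of $\Opt(\rho)$ and $\Opt(\sigma)$. No gaps.
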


\section{Connecting bridges to centrality and similarity}
\label{sec:central}

In this final section we give our main results.  We prove that, 
in the context developed in the previous section, the $\Opt(\rho)$ construction
is simply the centralizer $(\rho:\coverrho)$ (Lemma~\ref{lm:opt2}); 
we apply this and results from \cite{similar} to extend two important
results from \cite{zhuk2020} (Theorem~\ref{thm:zeta} and Lemma~\ref{lm:adjacent});
and we prove that the ``there exists a \rooted\ bridge"
relation between pairs $(\m a,\rho)$ and $(\m b,\sigma)$
is exactly the similarity relation between the respective 
quotient algebras $\m a/\rho$ and $\m b/\sigma$ (Corollary~\ref{cor:zhuk-equiv}).

\begin{df} \label{df:TArho}
Suppose $\m a$ is a finite Taylor algebra 
and $\rho\in \Con(\m a)$ is meet-irreducible with unique upper cover $\coverrho$.
If $\coverrho/\rho$ is abelian, let $\alpha=(\rho:\coverrho)$, define
\[
\Delta^\flat_{\coverrho,\alpha} = \set{(a_1,a_2,b_1,b_2) \in A^4}{$
((a_1,a_2),(b_1,b_2)) \in \Delta_{\coverrho,\alpha}$},
\]
and set $\Topt_{(\m a,\rho)} = I_{(\m a,\rho,\coverrho)} \circ \Delta_{\coverrho,\alpha}^\flat
\circ I_{(\m a,\rho,\coverrho)}$.
\end{df}

\begin{lm} \label{lm:opt2}
Suppose $\m a$ is a finite Taylor algebra 
and $\rho \in \Con(\m a)$ is meet-irreducible with unique upper cover 
$\coverrho$.  
\begin{enumerate}
\item \label{opt2:it1}
$\Opt(\rho)=(\rho:\coverrho)$.  
\item \label{opt2:it2}
Hence $\coverrho/\rho$ is
abelian if and only if $\Opt(\rho)> \rho$.
\item \label{opt2:it3}
If $\coverrho/\rho$ is abelian, then $\barrho=\coverrho$, so $\Cov(\rho) = \{\coverrho\}$, and
$\Topt_{(\m a,\rho)}$ is an optimal bridge
from $(\m a,\rho)$ to itself.
\end{enumerate}
\end{lm}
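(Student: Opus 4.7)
The plan is to prove the three parts in the order (3), (1), (2), since (2) is a direct consequence of (1) and the construction in (3) provides the lower bound needed for (1) in the abelian case. For part (3), I first note that the assumption that $\coverrho/\rho$ is abelian, combined with the fact that $\m a$ is Taylor (so $\HSP(\m a)$ omits type $\tup 1$ by Theorem~\ref{thm:locfin}), forces $\typ(\rho,\coverrho)=\tup 2$. Proposition~\ref{prp:basictol}\eqref{basictol:it1} then gives $\Cov(\rho)=\{\barrho\}$, and the affine (module-like) structure of a type-$\tup 2$ cover---each $\coverrho$-class is a union of $\rho$-classes forming an abelian group generated by its $(\rho,\coverrho)$-traces---yields $\barrho=\coverrho$. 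To show $\Topt_{(\m a,\rho)}=I\circ\Delta^\flat_{\coverrho,\alpha}\circ I$ (where $I=I_{(\m a,\rho,\coverrho)}$ and $\alpha=(\rho:\coverrho)$) is a bridge from $(\m a,\rho,\coverrho)$ to itself, condition \ref{pbridge:it0} is immediate from the outer $I$'s, \ref{pbridge:it1} follows from the full projections of $\Delta^\flat_{\coverrho,\alpha}$, and the crucial \ref{pbridge:it2} reduces via Lemma~\ref{lm:Deltaprop} (with $\theta=\coverrho$ and $\delta=\rho$) to the definitional fact $C(\alpha,\coverrho;\rho)$. Reflexivity and rootedness are clear, and the trace computation $\tr(\Topt_{(\m a,\rho)})=\alpha$ uses the generators $\{((a,a),(c,c)):(a,c)\in\alpha\}$ of $\Delta_{\coverrho,\alpha}$ in one direction and the compatibility $h(a,a)/\Dmon=h^{\ast}(a/\alpha)$ of Theorem~\ref{cor:D(A)}, applied to the SI quotient $\m a/\rho$, in the other.

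For part (1), the inclusion $(\rho:\coverrho)\subseteq\Opt(\rho)$ is given by (3) in the abelian case and is trivial in the non-abelian case (where $(\rho:\coverrho)=\rho$). For the reverse inclusion, I pass to the quotient: any reflexive rooted bridge $T$ from $(\m a,\rho,\tau)$ to itself descends to a reflexive rooted bridge $\overline T$ from $(\m a/\rho,0,\overline\tau)$ to itself, and $\tr(\overline T)$ is a congruence of the SI algebra $\m a/\rho$ (with monolith $\mu=\coverrho/\rho$), so either $\tr(\overline T)=0$, giving $\tr(T)=\rho$, or $\mu\leq\tr(\overline T)$. In the non-abelian case I rule out $\mu\leq\tr(\overline T)$ using Proposition~\ref{prp:tct}\eqref{tct:it3}: for a $(0,\mu)$-trace $N=\{1,o\}\subseteq U$ with pseudo-Maltsev polynomial $p(x,y)$, the elements $(1,1,o,o)$ and $(o,o,1,1)$ of $\overline T$ (from $\mu\leq\tr(\overline T)$) combined with a witness $(1,o,a,b)\in\overline T$ for $\proj_{1,2}(\overline T)=\overline\tau$---where the unary polynomial of Proposition~\ref{prp:tct}\eqref{tct:it3} lets us assume $a,b\in U$, and then the identities on $p$ together with \ref{pbridge:it2} force $(a,b)\in\{(1,o),(o,1)\}$---yield under $p$ a tuple of the form $(1,o,o,o)\in\overline T$, which violates \ref{pbridge:it2}. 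In the abelian case I view $\overline T\subseteq(\m a/\rho)(\mu)^2$ and push it through $h\times h$ to $S'\leq D(\m a/\rho,\mu)^2$; condition \ref{pbridge:it2} together with $h^{-1}(\Do)=0$ gives that $S'$ preserves $\Do$-membership, and pairing this with the maximality of $\Do$ in $D(\m a/\rho,\mu)$ (Lemma~\ref{lm:poly}) yields $S'\cap\Do^2\subseteq 0_{\Do}$, which via $h^{\ast}$ translates to $\tr(\overline T)\subseteq\overline\alpha=(0:\mu)$ and so to $\tr(T)\subseteq\alpha$.

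Part (2) follows at once from (1): $\Opt(\rho)>\rho$ iff $(\rho:\coverrho)>\rho$, which in the SI quotient $\m a/\rho$ is equivalent to $\mu\leq(0:\mu)$, i.e.\ to $\mu=\coverrho/\rho$ being abelian. The main obstacle will be the abelian-case direction of the reverse inclusion in part (1), specifically proving $S'\cap\Do^2\subseteq 0_{\Do}$: a bare subuniverse-closure argument will not suffice, and one must show that any hypothetical non-diagonal $(x,y)\in S'\cap\Do^2$ propagates, under the closures of $S'$ in $D(\m a/\rho,\mu)^2$, to a pair crossing the $\Do/(D\setminus\Do)$ boundary, contradicting the $\Do$-preservation of $S'$.
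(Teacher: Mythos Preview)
Your treatment of part~(3), the nonabelian case of~(1), and part~(2) is essentially the same as the paper's. The only substantive divergence is in the abelian upper bound $\Opt(\rho)\leq(\rho:\coverrho)$, and here there is a genuine gap.

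Your plan is to push $\overline T$ through $h\times h$ to $S'\leq D^2$ (where $D=D(\m a/\rho,\mu)$), observe that $S'$ preserves $\Do$-membership, and then argue that $S'\cap(\Do)^2\subseteq 0_{\Do}$ by ``propagation.'' But the propagation step needs $S'$ to be closed under the unary \emph{polynomials} of $D$, i.e.\ you need $(c,c)\in S'$ for every parameter $c\in D$. From reflexivity of $\overline T$ you only get $(h(a,a),h(a,a))\in S'$, which gives $0_{\Do}\subseteq S'$, not $0_D\subseteq S'$. Without the full diagonal you cannot push a hypothetical $(x,y)\in S'\cap(\Do)^2$ along a Maltsev chain for $\cg^D(x,y)\supseteq\Dmon$ to a boundary-crossing pair. (This is repairable: first replace $T$ by $T\circ T$, so that $(a,b,a,b)\in\overline T$ for all $(a,b)\in\mu$, giving $0_D\subseteq S'$; then the propagation argument goes through. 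You should also restrict to an optimal $T$, since for an arbitrary reflexive rooted bridge the claim that $\tr(\overline T)$ is a congruence is not justified.)

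The paper avoids all of this with a much shorter direct argument. After replacing $T$ by $T\circ T$ one has $(a,b,a,b)\in T$ for all $(a,b)\in\coverrho$. Then for any term $t(x,\tup y)$, any $(a,b)\in\tr(T)$, and any $(c_i,d_i)\in\coverrho$, applying $t$ coordinatewise to $(a,a,b,b)$ and the tuples $(c_i,d_i,c_i,d_i)$ yields
\[
(t(a,\tup c),\,t(a,\tup d),\,t(b,\tup c),\,t(b,\tup d))\in T,
\]
and bridge property~\ref{pbridge:it2} immediately gives $t(a,\tup c)\stackequiv{\rho}t(a,\tup d)\iff t(b,\tup c)\stackequiv{\rho}t(b,\tup d)$, which is exactly $C(\tr(T),\coverrho;\rho)$. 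This two-line verification replaces your entire detour through $D(\m a/\rho,\mu)$.
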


\begin{proof}
We first show if $\coverrho/\rho$ is abelian, i.e., $\typ(\rho,\coverrho)=\tup 2$,
then $\barrho=\coverrho$.
Note that $\barrho/\rho$ is a reflexive subuniverse of $(\m a/\rho)^2$ contained in the abelian
minimal congruence $\coverrho/\rho$ and properly containing $0_{A/\rho}$.
$\m a/\rho$ has a weak difference term by Theorem~\ref{thm:locfin}, which is a Maltsev
operation when restricted to each block of $\coverrho/\rho$.
Hence $\barrho/\rho =\coverrho/\rho$ by Lemma~\ref{lm:maltsev}.  
Since $\barrho$ is $\rho$-saturated, it follows that $\barrho = \coverrho$.
Hence $\Cov(\rho)=\{\coverrho\}$ by Proposition~\ref{prp:basictol}\eqref{basictol:it1}.

Next we show that if $\coverrho/\rho$ is abelian, then $\Topt_{(\m a,\rho)}$ 
is a 
bridge from $(\m a,\rho,\coverrho)$ to itself with trace $\alpha:=(\rho:\coverrho)$.  
Note that $\Delta^\flat_{\coverrho,\alpha}$ is not necessarily a bridge
from $(\m a,\rho,\coverrho)$ to itself, 
because it may fail to satisfy \ref{pbridge:it0}.  
However it does satisfy \ref{pbridge:it1} and \ref{pbridge:it2},
the latter by Lemma~\ref{lm:Deltaprop} using $C(\alpha,\coverrho;\rho)$.
Pre- and post-composing $\Delta^\flat_{\coverrho,\alpha}$
with $I_{(\m a,\rho,\coverrho)}$ preserves
\ref{pbridge:it1} and \ref{pbridge:it2} and also guarantees \ref{pbridge:it0}.
Thus $\Topt_{(\m a,\rho)}$ is indeed a bridge from $(\m a,\rho,\coverrho)$ to itself.
Finally, 
\[
\tr(\Topt_{(\m a,\rho)}) = 
\tr(I_{(\m a,\rho,\coverrho)}) \circ
\tr(\Delta^\flat_{\coverrho,\alpha}) \circ
\tr(I_{(\m a,\rho,\coverrho)})  = \rho \circ \alpha \circ \rho = \alpha
\]
as required.  Clearly $\Topt_{(\m a,\rho)}$ is \rooted.

Next, we will prove \eqref{opt2:it1}.
Assume first that $\coverrho/\rho$ is nonabelian.  Because $\rho$ is 
meet-irreducible and $\coverrho$ is its unique upper cover, we get
$(\rho:\coverrho)=\rho$.
As $\Opt(\rho)\geq \rho$ by Lemma~\ref{lm:opt}, it will be enough in this case to
show that $\Opt(\rho)\ngeq \coverrho$.
Assume instead that $\Opt(\rho)\geq \coverrho$.
Choose $\tau \in \Cov(\rho)$ and let
$T_0$ be an optimal bridge from $(\m a,\rho,\tau)$ to itself.
Let $T=T_0\circ T_0$; then $T$ is also an optimal bridge from
$(\m a,\rho,\tau)$ to itself. Hence $\tr(T)=\Opt(\rho)$, and moreover
$(a,b,a,b) \in T$ for all $(a,b)\in \tau$.

Pick a $(\rho,\coverrho)$-trace $N$.
By tame congruence theory,
 i.e., Proposition~\ref{prp:tct},
there exists $(0,1) \in N^2\setminus \rho$ 
and a binary polynomial
$p(x,y) \in \Pol_2(\m a)$ such that $p(0,0)=p(0,1)=p(1,0)=0$ and $p(1,1)=1$.  
Then $(0,1) \in \barrho$ (see Definition~\ref{df:barrho}), and since $\barrho =\tau\cup \tau^{-1}$
by Proposition~\ref{prp:basictol}, we have either $(0,1) \in \tau$ or $(1,0)\in \tau$.
Assume with no loss of generality that $(0,1) \in \tau$.

Choose a
$(2+n)$-ary term and a tuple $\tup c \in A^n$ so that $p(x,y) = t(x,y,\tup c)$.
Then we have the following tuples in $T$:
\[
\begin{array}{ccl}
(~0,~1,~0,~1\,) \in T&  \rule{.5in}{0in} &\mbox{as $(0,1)\in\tau$}\\[.05in]
(~0,~0,~1,~1\,) \in T& & \mbox{as $(0,1) \in \coverrho \leq \Opt(\rho)=\tr(T)$}\\[.05in]
\begin{array}{c}
(c_1,c_1,c_1,c_1) \in T\\\vdots\\(c_n,c_n,c_n,c_n) \in T
\end{array} & \left. \rule{0in}{.35in}\right\}  & \mbox{as $T$ is reflexive} 
\end{array}
\]
Applying $t$ coordinate-wise to these tuples gives $(0,0,0,1)\in T$,
contradicting \ref{pbridge:it2} since $(0,1)\not\in \rho$.  This contradiction
proves $\Opt(\rho)\ngeq \coverrho$ and hence $\Opt(\rho)=\rho$ when $\coverrho/\rho$ is
nonabelian.

Assume next that $\coverrho/\rho$ is abelian.
We will first show $\Opt(\rho)\leq (\rho:\coverrho)$.
As shown earlier, $\Cov(\rho)=\{\coverrho\}$.
Fix an optimal bridge $T$ from $(\m a,\rho,\coverrho)$ to itself.
By replacing $T$ with $T\circ T$, we may assume that $(a,b,a,b) \in T$ for all $(a,b) \in
\coverrho$.  To prove $\Opt(\rho) \leq (\rho:\coverrho)$, we will simply show that
$C(\tr(T),\coverrho;\rho)$ holds by verifying the condition in
Definition~\ref{df:cent}\eqref{dfcent:it3}.

Let $t(x,\tup y)$ be a $(1+n)$-ary term, let $(a,b) \in \tr(T)$, and let
$(c_i,d_i) \in \coverrho$ for $i=1,\ldots,n$.
Then we have the following tuples in $T$:
\begin{eqnarray*}
&(~a,~a,~b,~b~)\in T&\\
&(c_1,d_1,c_1,d_1)\in T&\\
&\vdots&\\
&(c_n,d_n,c_n,d_n)\in T&
\end{eqnarray*}
Applying $t$ coordinatewise gives
\[
(t(a,\tup c),t(a,\tup d),t(b,\tup c),t(b,\tup d)) \in T.
\]
Then by \ref{pbridge:it2},
\[
t(a,\tup c) \stackrel{\rho}{\equiv} t(a,\tup d) \iff
t(b,\tup c) \stackrel{\rho}{\equiv} t(b,\tup d).
\]
This proves that $C(\tr(T),\coverrho;\rho)$ holds
and hence $\Opt(\rho)=\tr(T)\leq (\rho:\coverrho)$.

On the other hand, we have already shown that $\Topt_{(\m a,\rho)}$ is a 
\rooted\ bridge
from $(\m a,\rho)$ to itself with trace $(\rho:\coverrho)$.  
Hence $(\rho:\coverrho)\leq \Opt(\rho)$, which proves $\Opt(\rho)=(\rho:\coverrho)$ in the
abelian case and completes
the proof of \eqref{opt2:it1} and \eqref{opt2:it3}. \eqref{opt2:it2} follows
from \eqref{opt2:it1}.
\end{proof}

Using our results about similarity,
we can now easily obtain (and generalize) one of the
key results in Zhuk \cite{zhuk2020}.

\begin{df}
An algebra is \emph{affine} if it is abelian and has a Maltsev term.
\end{df}

\begin{thm}[Cf.\ Zhuk {\cite[Corollary 8.17.1]{zhuk2020}}] \label{thm:zeta}
Suppose $\m a$ is a finite Taylor algebra, 
$\rho \in \Con(\m a)$ is irreducible, and $\Opt(\rho)=1_A$.
Then there exists a simple affine algebra $\m z \in \HS(\m a^2)$ having a 1-element
subuniverse $\{0\}\leq \m z$, and there exists a subdirect subuniverse
$\zeta \leq_{sd} \m a\times \m a\times \m z$ with $\proj_{1,2}(\zeta)=\rho^\ast$,
such that for all $(a,a',b) \in \zeta$,
\[
(a,a') \in \rho \iff b=0.
\]
\end{thm}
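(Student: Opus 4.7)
My plan is to reduce to the quotient $\m a' := \m a/\rho$ and apply the $D(\m a',\mu)$ construction from Theorem~\ref{cor:D(A)}. First, Lemma~\ref{lm:opt2} gives $(\rho:\coverrho) = \Opt(\rho) = 1_A$, so $\coverrho/\rho$ is abelian, and by part~\eqref{opt2:it3} we also have $\rho^\ast = \coverrho$. In $\m a'$, the congruence $\mu := \coverrho/\rho$ is the monolith (minimal and abelian), and the centralizer $\alpha := (0_{A'}:\mu)$ equals $1_{A'}$.

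Next I set $\m z := D(\m a',\mu)$ and unpack Theorem~\ref{cor:D(A)}. Because $\alpha = 1_{A'}$, the isomorphism $h^\ast: \m a'/\alpha \cong \m z/\Dmon$ has trivial domain, forcing $\Dmon = 1_Z$. Since $\Dmon$ is the monolith and also the top congruence, $\m z$ has only two congruences and is therefore simple; furthermore $1_Z = \Dmon$ is abelian, so $\m z$ is abelian. The weak difference term of the Taylor variety (available by Theorem~\ref{thm:locfin}) is Maltsev on the abelian algebra $\m z$, so $\m z$ is affine. The transversal $\Do$ from Theorem~\ref{cor:D(A)}\eqref{corD(A):it1} collapses to a single element, yielding a one-element subuniverse $\{0\} \leq \m z$. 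To see $\m z \in \HS(\m a^2)$, chain the natural maps $\m a^2 \twoheadrightarrow (\m a')^2 \geq \m a'(\mu) \twoheadrightarrow \m z$.

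Finally I construct $\zeta$ as the graph of the composite homomorphism $\phi: \coverrho \to \m z$ defined by $\phi(a,a') = h(a/\rho,\, a'/\rho)$; namely, $\zeta := \{(a,a',\phi(a,a')) : (a,a') \in \coverrho\}$, which is a subuniverse of $\m a \times \m a \times \m z$ as the image of $\coverrho$ under the homomorphism $(a,a') \mapsto (a,a',\phi(a,a'))$. One then checks $\proj_{1,2}(\zeta) = \coverrho = \rho^\ast$ and subdirectness (since $h$, and hence $\phi$, is surjective). The required biconditional is immediate from Theorem~\ref{cor:D(A)}\eqref{corD(A):it2}: $(a,a') \in \rho$ iff $(a/\rho,a'/\rho) \in 0_{A'} = h^{-1}(\Do)$ iff $\phi(a,a') \in \Do = \{0\}$. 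I expect the main conceptual step to be recognizing that $\Opt(\rho) = 1_A$ translates, via $\Opt(\rho) = (\rho:\coverrho)$, into maximality of $\alpha$ in $\m a'$, and that this collapse is precisely what forces $D(\m a',\mu)$ to be simple; once this is observed, the construction of $\zeta$ and verification of its properties are routine.
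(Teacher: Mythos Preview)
Your proof is correct and follows essentially the same route as the paper: pass to $\m a/\rho$, invoke Lemma~\ref{lm:opt2} to get $(\rho:\coverrho)=1_A$ and $\rho^\ast=\coverrho$, set $\m z = D(\m a/\rho)$, and define $\zeta$ via the surjection $h$ from Theorem~\ref{cor:D(A)}. Your write-up is actually more explicit than the paper's in justifying simplicity, affineness, the one-element subuniverse, and membership in $\HS(\m a^2)$; the only cosmetic point is that your chain $\m a^2 \twoheadrightarrow (\m a')^2 \geq \m a'(\mu) \twoheadrightarrow \m z$ is literally $\mathsf{HSH}$, but this collapses to $\HS$ by pulling $\m a'(\mu)$ back to the subalgebra $\coverrho \leq \m a^2$.
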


\begin{proof}
Let $\coverrho$ be the unique upper cover of $\rho$.
By Lemma~\ref{lm:opt2}, we have $(\rho:\coverrho)=1_A$ and $\coverrho/\rho$ is abelian.
Thus $\rho^\ast=\coverrho$ by Lemmas~\ref{lm:irred}\eqref{irred:it1} 
and~\ref{lm:opt2}\eqref{opt2:it3}.  Let
$\overline{\m a} = \m a/\rho$ and $\mu = \coverrho/\rho$; thus $\overline{\m a}$ is
subdirectly irreducible with abelian monolith $\mu$ satisfying
$(0:\mu)=1$.  Let $\m z = D(\overline{\m a})$.
By Theorem~\ref{cor:D(A)}, $\m z$ is simple and abelian, so is affine, and has
a 1-element subuniverse $\Do = \{0\}$. 
Also by Theorem~\ref{cor:D(A)},
there exists a surjective homomorphism $h:\overline{\m a}(\mu) \ra \m z$ such that
$h^{-1}(0) = 0_{\barA}$.  Let
\[
\zeta = \set{(a,a',b) \in A\times A\times Z}{$(a,a') \in \coverrho$ and
$h((a/\rho,a'/\rho)) = b$}.
\]
$\zeta$ has the required properties.
\end{proof}

\begin{rem}
Zhuk \cite{zhuk2020} proved Theorem~\ref{thm:zeta} in the special
case where the signature of $\m a$ consists of just one operation, $w(x_1,\ldots,x_m)$,
which is an $m$-ary \emph{special WNU}, that is, an (idempotent) weak near-unanimity 
operation whose derived binary operation
$x\circ y := w(x,\ldots,x,y)$ satisfies $x\circ (x\circ y)=x\circ y$.
In this context, if $\m z \in \HS(\m a^2)$ then $w$ is also an $m$-ary special WNU
in $\m z$; if in addition $\m z$ is simple and affine, then it is not hard to show
(cf.\ \cite[Lemma 6.4]{zhuk-key}) that
$\m z\cong (\mathbb Z_p,x_1+\cdots +x_m \pmod p)$ for some prime $p$ which is a 
divisor of $m-1$.  Zhuk stated his \cite[Corollary~8.17.1]{zhuk2020} with this
stronger conclusion.
\end{rem}

Next we establish a simple invariant of \rooted\ bridges.

\begin{thm} \label{lm:newsametype}
Suppose $\m a,\m b$ are finite algebras in a locally finite Taylor variety,
and 
$\rho \in \Con(\m a)$, $\sigma \in \Con(\m b)$, where $\rho$ and $\sigma$
are meet-irreducible.  Let $\coverrho,\coversigma$ be the respective unique upper
covers of $\rho,\sigma$.  
Assume that there exists a \rooted\ bridge from $(\m a,\rho)$ to $(\m b,\sigma)$.
Then 
$\coverrho/\rho$ is abelian if and only if $\coversigma/\sigma$ is abelian.
\end{thm}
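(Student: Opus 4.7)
\begin{plan}
The plan is to argue the contrapositive. By replacing $T$ with its converse bridge $T^\cup$, which is itself a rooted bridge from $(\m b,\sigma)$ to $(\m a,\rho)$, we may assume $\coverrho/\rho$ is abelian and aim to show that $\coversigma/\sigma$ is likewise abelian. By Lemma~\ref{lm:opt2}\eqref{opt2:it3}, $\Cov(\rho) = \{\coverrho\}$, so the rooted bridge $T$ contains a sub-bridge $T_1$ from $(\m a,\rho,\coverrho)$ to $(\m b,\sigma,\tau')$ for some $\tau' \in \Cov(\sigma)$, with $\tr(T_1) = \tr(T)$.

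Suppose for contradiction $\coversigma/\sigma$ is nonabelian. Using tame congruence theory (Proposition~\ref{prp:tct}\eqref{tct:it3}), extract a $(\sigma,\coversigma)$-trace $N \subseteq B$, elements $0,1 \in N$ with $(0,1) \in N^2 \setminus \sigma$, and a binary polynomial $p(x,y) = s(x,y,d_1,\ldots,d_k) \in \Pol_2(\m b)$ satisfying $p(0,0) = p(0,1) = p(1,0) = 0$ and $p(1,1) = 1$. Since $(0,1) \in \barsigma$ by Proposition~\ref{prp:basictol}, and composing $T_1$ with the reflexive bridge provided by Lemma~\ref{lm:type45} when $\typ(\sigma,\coversigma) \in \{\tup 4,\tup 5\}$, we may arrange $(0,1) \in \tau'$.

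The plan then is to mimic the polynomial argument from the nonabelian case in the proof of Lemma~\ref{lm:opt2}\eqref{opt2:it1}, but now applied across the bridge $T_1$. Using \ref{pbridge:it1} and \ref{pbridge:it2}, extract tuples $(x_1,x_2,0,1) \in T_1$ with $(x_1,x_2) \in \coverrho \setminus \rho$ and $(e_j,e_j',d_j,d_j) \in T_1$ with $(e_j,e_j') \in \rho$. Simultaneously, leverage the optimal bridge $\Topt_{(\m a,\rho)}$ from $(\m a,\rho)$ to itself provided by Lemma~\ref{lm:opt2}\eqref{opt2:it3}, which, via the abelianness of $\coverrho/\rho$, has trace $(\rho:\coverrho) \supseteq \coverrho$ and so places tuples such as $(x_1,x_1,x_2,x_2)$ in $\Topt_{(\m a,\rho)}$. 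The key step is to apply the term $s$ coordinate-wise to a carefully chosen list of tuples drawn from $T_1$ and from the composition $\Topt_{(\m a,\rho)} \circ T_1$, so as to produce a 4-tuple in $\Topt_{(\m a,\rho)} \circ T_1$ whose $\m a$-side entries are forced to be $\rho$-equivalent (via the trace structure of $\Topt_{(\m a,\rho)}$, which collapses the $\coverrho$-pair $(x_1,x_2)$ into a diagonal one) and whose $\m b$-side entries are $(0,1) \notin \sigma$ (via the nonabelian polynomial behavior of $p$). This violates \ref{pbridge:it2} for the composite bridge, yielding the desired contradiction.

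The main obstacle is the orchestration of this final coordinate-wise application: unlike Lemma~\ref{lm:opt2}\eqref{opt2:it1}, where the bridge $T$ operates on a single algebra $\m a$ (allowing the crucial off-diagonal tuple $(0,0,1,1)$ to arise directly from the inclusion $\coverrho \subseteq \tr(T)$), here the bridge straddles two algebras, and the analogous \emph{diagonal-on-$\m a$, off-diagonal-on-$\m b$} tuples are forbidden in $T_1$ by \ref{pbridge:it2}. This forces us to route the needed $\coverrho$-collapsing through the separate optimal bridge $\Topt_{(\m a,\rho)}$ (which lives entirely on the $\m a$-side) and compose it with $T_1$, at the cost of carefully tracking which composed-bridge tuples are available. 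The two-algebra setting therefore requires a genuinely new coordination of bridge compositions, and this is where the bulk of the proof effort will lie.
\end{plan}
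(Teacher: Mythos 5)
Your setup is sound and matches the paper's: assume (after replacing $T$ by $T^\cup$ if necessary) that $\coverrho/\rho$ is abelian and $\coversigma/\sigma$ is not, pass to a sub-bridge from $(\m a,\rho,\coverrho)$ to $(\m b,\sigma,\tau')$, extract the pseudo-meet polynomial $p$ on the $\m b$-side from tame congruence theory, and you correctly diagnose the central obstruction: the tuple that drives the contradiction in the nonabelian case of Lemma~\ref{lm:opt2} (diagonal on one side, $(0,1)$ on the other) cannot simply be written down here, because \ref{pbridge:it2} forbids it. The problem is that your plan stops exactly there. You assert that one can "apply the term $s$ coordinate-wise to a carefully chosen list of tuples drawn from $T_1$ and from $\Topt_{(\m a,\rho)}\circ T_1$" so as to manufacture a forbidden tuple, and you concede that "this is where the bulk of the proof effort will lie." That orchestration is the entire mathematical content of the theorem, and it is not supplied; moreover, merely composing with $\Topt_{(\m a,\rho)}$ cannot by itself do the collapsing you want, since the composite is again a bridge and hence itself satisfies \ref{pbridge:it2} -- the collapse has to be produced by applying operations to specific member tuples, and which tuples and which operations is the hard part.

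For comparison, the paper resolves the obstacle with several ingredients absent from your plan: (i) it replaces $T$ by $\Topt_{(\m a,\rho)}\circ T$ and invokes Lemma~\ref{lm:modiso} (using $\Opt(\sigma)=\sigma$ in the nonabelian case) to see that $\tr(T)$ is the graph of a surjective homomorphism $h:\m a\to\m b$ with kernel $(\rho:\coverrho)$; (ii) it proves a lifting claim producing, for each polynomial $f$ of $\m b$, a polynomial $f_{\m a}$ of $\m a$ compatible with $h$ and preserving $T$ coordinatewise; (iii) it applies the lifted idempotent $e_{\m a}$ to a witness $(a',b',0,1)\in T$ so that the resulting pair $(a,b)$ satisfies $h(a)=h(b)=u$ inside the $(\sigma,\coversigma)$-minimal set; and (iv) it splits into the cases $u\ne 1$ and $u=1$, using the pseudo-meet identities of $p$ in the first case and an application of the weak difference term to three tuples sharing the $\m b$-side pair $(0,1)$ (exploiting abelianness of $\coverrho/\rho$) in the second, in both cases producing a tuple in $T$ violating \ref{pbridge:it2}. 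None of the homomorphism $h$, the polynomial lifting, the normalization by $e$, the case analysis, or the weak-difference-term step appears in your proposal, so as it stands the argument has a genuine gap at its decisive step.
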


\begin{proof}
Let $T$ be a \rooted\ bridge from $(\m a,\rho)$ to $(\m b,\sigma)$.  
We may assume that $T$ is from $(\m a,\rho,\tau)$ to $(\m b,\sigma,\tau')$
where $\tau \in \Cov(\rho)$ and $\tau' \in \Cov(\sigma)$.
Assume for the sake of contradiction that $\coverrho/\rho$ is abelian while
$\coversigma/\sigma$ is not.  Then by tame congruence theory,
$\typ(\rho,\coverrho)=\tup 2$ while $\typ(\sigma,\coversigma)\in \{\tup 3,\tup 4,
\tup 5\}$.  Hence $\tau = \coverrho$ by Lemma~\ref{lm:opt2}
and $\tau' \cup (\tau')^{-1} = \barsigma$ by Proposition~\ref{prp:basictol}.

By passing to $\overline{\m a}:=\m a/\rho$ and $\overline{\m b}:= \m b/\sigma$,
we may assume that $\rho=0_A$ and $\sigma=0_B$.
Rename $\coverrho$ and $\coversigma$ as $\mu_{\m a}$ and $\mu_{\m b}$ respectively.
Let $\alpha = (0_A:\mu_{\m a})$ and observe that $(0_B:\mu_{\m b})= 0_B$.  Replace $T$
with $\Topt_{(\m a,0)} \circ T$; 
then by Lemma~\ref{lm:modiso}, the rule
\[
h(a)=b \iff (a,b) \in \tr(T)
\]
defines a surjective homomorphism $h:\m a\ra \m b$ with kernel $\alpha$.

\begin{clm} \label{clm:rBtoA}
For all $f \in \Pol_k(\m b)$ there exists $f_{\m a} \in
\Pol_k(\m a)$ such that 
\begin{enumerate}
\item
$h(f_{\m a}(x_1,\ldots,x_k)) = f(h(x_1),\ldots,h(x_k))$ for all $x_1,\ldots,x_k \in A$.
\item \label{rBtoA:it2}
For all $(x_1,y_1,u_1,v_1),\ldots,(x_k,y_k,u_k,v_k) \in T$ we have
\[
(f_{\m a}(\tup x),f_{\m a}(\tup y),f(\tup u),f(\tup v)) \in T.
\]
\end{enumerate}
\end{clm}

\begin{proof}[Proof of Claim~\ref{clm:rBtoA}]
Indeed, if we select a term $t(x_1,\ldots,x_k,y_1,\ldots,y_n)$ and $\tup{b} \in B^n$
so that $f(\tup{x})=t^{\m b}(\tup{x},\tup{b})$, then we simply need to select
$\tup a \in A^n$ with $h(a_i)=b_i$ for each $i \in [n]$ and then define
$f_{\m a}(\tup{x}) = t^{\m a}(\tup{x},\tup{a})$.  Item (1) then follows immediately, 
and item (2) follows from the fact that $(a_i,a_i,b_i,b_i) \in T$ for all $i \in [n]$.
\end{proof}

By tame congruence theory, i.e., Proposition~\ref{prp:tct},
there exists a $(0_B,\mu_{\m b})$-minimal set $U$ with unique
$(0_B,\mu_{\m b})$-trace $N=\{0,1\}= U\cap C$, 
a unary polynomial $e(x) \in \Pol_1(\m b)$, and
a binary polynomial $p(x,y) \in \Pol_2(\m b)$, satisfying:
\begin{enumerate}
\item
$e(A)=U$ and $e(e(x))=e(x)$ for all $x \in A$.
\item
$p(x,x)=p(x,1)=p(1,x) = x$ for all $x \in U$.
\item
$p(x,0)=p(0,x)=x$ for all $x \in U\setminus \{1\}$.
\end{enumerate}

Because $T$ is \rooted, we have $(0,1) \in \proj_{3,4}(T)$ or $(1,0)\in \proj_{3,4}(T)$.
Assume with no loss of generality that $(0,1) \in \proj_{3,4}(T)$.
Pick $(a',b')\in \mu_{\m a}$ with $(a',b',0,1) \in T$.
Let $e_{\m a} \in \Pol_1(\m a)$ be a polynomial given by Claim~\ref{clm:rBtoA} for $e(x)$.
Let $a = e_{\m a}(a')$ and $b = e_{\m a}(b')$.
Then $(a,b,0,1) \in T$ and $h(a)=e(h(a'))\in U$ by Claim~\ref{clm:rBtoA}.
Since $(a,b) \in \mu_{\m a} \subseteq\ker(h)$ we get $h(b)=h(a)$.  
Let $u = h(a) \in U$.  

Now let $p_{\m a} \in \Pol_2(\m a)$ be a polynomial given by Claim~\ref{clm:rBtoA}
for $p(x,y)$.  In calculations that follow, we will denote both
$p(x,y)$ and $p_{\m a}(x,y)$ by $xy$.

We have the following elements of $T$:
\begin{align*}
\tau_1=(x_1,y_1,u_1,v_1) :={}& (a,b,0,1)\\
\tau_2=(x_2,y_2,u_2,v_2) :={}& (a,a,u,u)\\
\tau_3=(x_3,y_3,u_3,v_3) :={}& (b,b,u,u).
\end{align*}

\noindent\textsc{Case 1}: $u\ne 1$.

Applying Claim~\ref{clm:rBtoA}\eqref{rBtoA:it2} to $p(x,y)$ and the pairs $(\tau_1,\tau_1)$, $(\tau_2,\tau_1)$ and
$(\tau_1,\tau_3)$ respectively gives
\begin{align}
(aa,bb,0,1)&\in T \label{clm:inT1}\\
(aa,ab,u,u) &\in T\label{clm:inT2}\\
(ab,bb,u,u) &\in T. \label{clm:inT3}
\end{align}
\eqref{clm:inT1} with \ref{pbridge:it2} gives
$aa\ne bb$, but \eqref{clm:inT2} and \eqref{clm:inT3} with \ref{pbridge:it2}
give $aa=ab=bb$, contradiction.  

\medskip
\noindent\textsc{Case 2}: $u=1$.

Applying Claim~\ref{clm:rBtoA}\eqref{rBtoA:it2} to $p(x,y)$ and the pairs $(\tau_1,\tau_1)$, $(\tau_2,\tau_1)$ and
$(\tau_1,\tau_3)$ respectively gives
\begin{align}
\sigma_1:= (aa,bb,0,1)&\in T \label{clm:inT4}\\
\sigma_2:=(aa,ab,0,1) &\in T\label{clm:inT5}\\
\sigma_3:=(ab,bb,0,1) &\in T. \label{clm:inT6}
\end{align}
Let $d(x,y,z)$ be a weak difference term for the locally finite Taylor variety
containing $\m a$ and $\m b$.
Recall that $\mu_{\m a}$ is abelian and observe that
$aa,ab,bb$ all belong to a common $\mu_{\m a}$-class.
Applying $d$ coordinate-wise to the tuples $\sigma_2,\sigma_1,\sigma_3$ (in that order)
and using the defining property of weak difference terms, 
we get
$(ab,ab,0,1) \in T$, which again contradicts \ref{pbridge:it2}.

As we have found a contradiction in both cases, the theorem is proved.
\end{proof}

Now we can extend and give a relatively short proof of
an important result about adjacent congruences in \cite{zhuk2020}.

\begin{lm} [cf.\ Zhuk {\cite[Lemma 8.18]{zhuk2020}}] \label{lm:adjacent}
Suppose $\m a$ is a finite Taylor algebra 
and $\rho,\sigma \in \Con(\m a)$ are meet-irreducible and adjacent.
If $\rho\ne \sigma$, then $\Opt(\sigma)>\sigma$.
\end{lm}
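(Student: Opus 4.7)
The plan is to proceed by contradiction, leveraging three tools established in this section: Corollary~\ref{cor:sameopt} (adjacent meet-irreducibles share the same $\Opt$), Lemma~\ref{lm:opt2}\eqref{opt2:it2} (which characterizes when $\Opt(\rho)$ strictly exceeds $\rho$), and Theorem~\ref{lm:newsametype} (that the abelian/nonabelian character of $\coverrho/\rho$ versus $\coversigma/\sigma$ is transferred across rooted bridges).

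Suppose for contradiction that $\Opt(\sigma)=\sigma$. The first step is to invoke Lemma~\ref{lm:opt2}\eqref{opt2:it2}, which gives that $\coversigma/\sigma$ is nonabelian (since the content of that item is: $\coversigma/\sigma$ is abelian iff $\Opt(\sigma)>\sigma$). Next, the adjacency hypothesis supplies a reflexive rooted bridge from $(\m a,\rho)$ to $(\m a,\sigma)$, so Theorem~\ref{lm:newsametype} applies and transfers the nonabelian property: $\coverrho/\rho$ must also be nonabelian. Applying Lemma~\ref{lm:opt2}\eqref{opt2:it2} a second time (now in the reverse direction) yields $\Opt(\rho)=\rho$.

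Finally, Corollary~\ref{cor:sameopt} (adjacency of $\rho,\sigma$ forces $\Opt(\rho)=\Opt(\sigma)$) gives $\rho=\Opt(\rho)=\Opt(\sigma)=\sigma$, contradicting the hypothesis $\rho\ne\sigma$. Hence $\Opt(\sigma)>\sigma$ as claimed.

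There is no real obstacle here once Theorem~\ref{lm:newsametype} and Lemma~\ref{lm:opt2} are in hand; essentially all the work has been done in those two results, and the present lemma is a clean three-line consequence. The one point that merits a sentence of care is that Corollary~\ref{cor:sameopt} as stated applies to meet-irreducible congruences of a finite Taylor algebra, which is exactly the hypothesis here, so no extra justification is needed to pair it with Theorem~\ref{lm:newsametype} (in the special case $\m b=\m a$).
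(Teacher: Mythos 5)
Your proof is correct and is essentially identical to the paper's: assume $\Opt(\sigma)=\sigma$, deduce $\coversigma/\sigma$ nonabelian via Lemma~\ref{lm:opt2}\eqref{opt2:it2}, transfer nonabelianness across the rooted bridge via Theorem~\ref{lm:newsametype}, and conclude $\rho=\Opt(\rho)=\Opt(\sigma)=\sigma$, contradicting $\rho\ne\sigma$. (If anything, your explicit appeal to Corollary~\ref{cor:sameopt}, the meet-irreducible version, is the more precise citation.)
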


\begin{proof}
Let $\coverrho,\coversigma$ be the unique upper covers of $\rho,\sigma$ respectively.
Assume $\Opt(\sigma)=\sigma$.  Then $\coversigma/\sigma$ is nonabelian by 
Lemma~\ref{lm:opt2}\eqref{opt2:it2}.  Hence $\coverrho/\rho$ is nonabelian by 
Theorem~\ref{lm:newsametype}, so $\rho=\Opt(\rho) = \Opt(\sigma)=\sigma$ by
Lemma~\ref{lm:opt2}\eqref{opt2:it2} and Corollary~\ref{lm:sameopt}, 
contradicting $\rho\ne\sigma$.
\end{proof}

For the remainder of this section, we work to characterize the ``there exists a 
\rooted\ bridge"
relation between meet-irreducible congruences.  The next lemma handles the nonabelian case.

\begin{lm} \label{lm:sametype}
Suppose $\m a,\m b$ are finite algebras in a locally finite Taylor variety
and 
$\rho \in \Con(\m a)$, $\sigma \in \Con(\m b)$, where $\rho$ and $\sigma$
are meet-irreducible.  Let $\coverrho,\coversigma$ be the respective unique upper
covers of $\rho,\sigma$.  
\begin{enumerate}
\item \label{sametype:it2}
If $\m a/\rho\cong \m b/\sigma$, then there exists a \rooted\ bridge from $(\m a,\rho)$ to
$(\m b,\sigma)$.
\item \label{sametype:it1}
Conversely, if
$\coverrho/\rho$ and $\coversigma/\sigma$ are both nonabelian and there
exists a \rooted\ bridge from $(\m a,\rho)$ to $(\m b,\sigma)$, then
$\m a/\rho\cong \m b/\sigma$.
\end{enumerate}
\end{lm}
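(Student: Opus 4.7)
The plan is to prove the two directions separately, each by a direct application of the machinery already set up.

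For part \eqref{sametype:it2}, given an isomorphism $\varphi : \m a/\rho \to \m b/\sigma$, the idea is to transport a distinguished cover $\tau \in \Cov(\rho)$ across $\varphi$ to obtain a matching $\tau' \in \Cov(\sigma)$, and then stitch these together into a bridge via the ``graph'' of $\varphi$. Concretely, I would fix any $\tau \in \Cov(\rho)$ and define
\[
T := \set{(a_1,a_2,b_1,b_2) \in A^2\times B^2}{$(a_1,a_2)\in \tau$ and $\varphi(a_i/\rho)=b_i/\sigma$ for $i=1,2$}.
\]
This is a subuniverse of $\m a\times \m a\times \m b\times \m b$, being the intersection of $\tau\times \m b^2$ with the preimage of the ``equality diagonal'' under the natural homomorphism $\m a^2\times \m b^2\to (\m b/\sigma)^4$ that sends $(a_1,a_2,b_1,b_2)$ to $(\varphi(a_1/\rho),\varphi(a_2/\rho),b_1/\sigma,b_2/\sigma)$. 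The axiom \ref{pbridge:it0} holds by construction, and \ref{pbridge:it2} follows from the chain $(a_1,a_2)\in \rho \Leftrightarrow a_1/\rho=a_2/\rho\Leftrightarrow \varphi(a_1/\rho)=\varphi(a_2/\rho)\Leftrightarrow (b_1,b_2)\in\sigma$. For \ref{pbridge:it1}, clearly $\proj_{1,2}(T)=\tau$; and the image $\tau':=\proj_{3,4}(T)$ lies in $\Cov(\sigma)$ because $\varphi$ induces a lattice isomorphism between the $0$-saturated subuniverses of $(\m a/\rho)^2$ and those of $(\m b/\sigma)^2$, sending $\Cov(0_{A/\rho})$ bijectively onto $\Cov(0_{B/\sigma})$ and thus $\Cov(\rho)$ bijectively onto $\Cov(\sigma)$ by the Correspondence Theorem. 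Hence $T$ is a bridge from $(\m a,\rho,\tau)$ to $(\m b,\sigma,\tau')$, which is a rooted bridge from $(\m a,\rho)$ to $(\m b,\sigma)$ by definition.

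For part \eqref{sametype:it1}, suppose $T$ is a rooted bridge from $(\m a,\rho)$ to $(\m b,\sigma)$. By Definition~\ref{df:rooted}, there exist $\tau \in \Cov(\rho)$ and $\tau' \in \Cov(\sigma)$ and a sub-bridge $T'\subseteq T$ from $(\m a,\rho,\tau)$ to $(\m b,\sigma,\tau')$ with $\tr(T')=\tr(T)$. Applying Lemma~\ref{lm:modiso} to $T'$ yields an isomorphism $\gamma:\m a/\Opt(\rho,\tau)\cong \m b/\Opt(\sigma,\tau')$. By the extended definition of $\Opt$ for meet-irreducible congruences, $\Opt(\rho,\tau)=\Opt(\rho)$ and $\Opt(\sigma,\tau')=\Opt(\sigma)$. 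Now the nonabelianness of $\coverrho/\rho$ and $\coversigma/\sigma$, combined with Lemma~\ref{lm:opt2}\eqref{opt2:it2}, forces $\Opt(\rho)=\rho$ and $\Opt(\sigma)=\sigma$. Hence $\gamma$ is an isomorphism $\m a/\rho\cong\m b/\sigma$, as required.

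I do not expect a substantive obstacle. Both parts essentially unpack definitions against Lemmas~\ref{lm:modiso} and~\ref{lm:opt2}. The mildest subtlety is in part \eqref{sametype:it2}: verifying that transporting a member of $\Cov(\rho)$ through $\varphi$ genuinely produces a member of $\Cov(\sigma)$. This is a routine consequence of the fact that an isomorphism of quotient algebras lifts to an isomorphism between lattices of saturated subuniverses of their squares, preserving the ``minimal proper saturated'' property that characterizes $\Cov$.
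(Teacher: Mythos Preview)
Your proposal is correct and matches the paper's argument closely. Part~\eqref{sametype:it1} is identical to the paper's proof. For part~\eqref{sametype:it2} there is a minor cosmetic difference: the paper defines the larger bridge
\[
T = \set{(a,a',b,b')}{$\gamma(a/\rho)=b/\sigma$ and $\gamma(a'/\rho)=b'/\sigma$}
\]
(so $\proj_{1,2}(T)=A^2$) and leaves rootedness implicit via the characterization following Definition~\ref{df:rooted}, whereas you restrict to $\tau$ from the outset and verify directly that $\proj_{3,4}(T)\in\Cov(\sigma)$. Both constructions work; yours requires the extra observation that the isomorphism $\varphi$ carries $\Cov(\rho)$ to $\Cov(\sigma)$, which you justify correctly, while the paper's bridge avoids this at the cost of leaving the rootedness check to the reader.
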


\begin{proof}
\eqref{sametype:it2}
Suppose $\gamma:\m a/\rho\cong \m b/\sigma$ is an isomorphism.
Then
\[
T = \set{(a,a',b,b')\in A\times A\times B\times B}{$\gamma(a/\rho)=b/\sigma$ and
$\gamma(a'/\rho)=b'/\sigma$}
\]
is a \rooted\ bridge from $(\m a,\rho)$ to $(\m b,\sigma)$.

\eqref{sametype:it1}
We have $\Opt(\rho)=\rho$ and $\Opt(\sigma)=\sigma$ by Lemma~\ref{lm:opt2}\eqref{opt2:it2}.
Let $T$ be a \rooted\ bridge from $(\m a,\rho)$ to $(\m b,\sigma)$.  We can assume 
that $T$ is a bridge from $(\m a,\rho,\tau)$ to $(\m b,\sigma,\tau')$ for some
$\tau \in \Cov(\rho)$ and $\tau' \in \Cov(\sigma)$.
Thus $\rho=\Opt(\rho,\tau)$ and $\sigma = \Opt(\sigma,\tau')$.
Now the claim follows from Lemma~\ref{lm:modiso}.
\end{proof}

It remains to characterize the ``there exists a \rooted\ bridge" relation between 
meet-irreducible
congruences $\rho$ and $\sigma$ when $\coverrho/\rho$ and $\coversigma/\sigma$ are both
abelian.
We will see that there is a tight relationship to similarity and
\proper\ bridges as defined in \cite{similar}.
The main difficulty is that,
although Zhuk's definition of bridges is similar to the definition
of \proper\ bridges in \cite{similar}, the definitions 
differ in one essential way: we required \proper\ bridges
to satisfy
\begin{enumerate}[label=(B\arabic*)]
\setcounter{enumi}{2}
\item \label{pbridge:it3}
For all $(a_1,a_2,b_1,b_2) \in T$ we have $(a_i,b_i)\in \tr(T)$ for $i=1,2$,
\end{enumerate}
while Zhuk's bridges are not required to satisfy this condition.  Happily,
\rooted\ bridges between meet-irreducible congruences with abelian upper covers
can be assumed without loss of generality to satisfy \ref{pbridge:it3},
as we will prove in Theorem~\ref{thm:samebridge}.
First, we need the following result about the optimal bridges from Lemma~\ref{lm:opt2}\eqref{opt2:it3}.

\begin{lm} \label{lm:TDTDinv}
Suppose $\m a$ is a finite subdirectly Taylor algebra with abelian monolith $\mu$.
Let $\alpha = (0:\mu)$ and 
define $\Delta^\flat_{\mu,\alpha}$ as in Definition~\ref{df:TArho} (setting
$\rho:=0$, so $\coverrho=\mu$).
Recall the \proper\ bridge
$\TD a$ from $\m a$ to $D(\m a)$ 
defined in Corollary~\ref{cor:simprop},
and the optimal bridge $\Topt_{(\m a,0)}$ from $(\m a,0)$ to itself
defined in Definition~\ref{df:TArho}.  Then
$\Topt_{(\m a,0)} = \Delta^\flat_{\mu,\alpha} = \TD a \circ (\TD a)^\cup$.
\end{lm}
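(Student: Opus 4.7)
My plan is to prove the two equalities in order, beginning with the (more formal) left one. Unwinding Definition~\ref{df:TArho} with $\rho=0$ and $\coverrho=\mu$, we have $\Topt_{(\m a,0)} = I_{(\m a,0,\mu)}\circ \Delta^\flat_{\mu,\alpha}\circ I_{(\m a,0,\mu)}$, and the identity bridge collapses to $I_{(\m a,0,\mu)} = \{(a_1,a_2,a_1,a_2):(a_1,a_2)\in \mu\}$. Since $\Delta_{\mu,\alpha}$ is a congruence of $\m a(\mu)$, every tuple $(a_1,a_2,c_1,c_2)\in \Delta^\flat_{\mu,\alpha}$ automatically has $(a_1,a_2),(c_1,c_2)\in \mu$, so both compositions with the identity bridge impose no new conditions and $\Topt_{(\m a,0)} = \Delta^\flat_{\mu,\alpha}$.

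For the main equality $\Delta^\flat_{\mu,\alpha} = \TD a \circ (\TD a)^\cup$, abbreviate $\Delta := \Delta_{\mu,\alpha}$, and record the basic auxiliary fact that each coordinate projection $\pi_i:\m a(\mu)\to \m a$ is a homomorphism carrying the generating pairs $((a,a),(b,b))$ of $\Delta$ to $(a,b)\in \alpha$, hence $\pi_i(\Delta)\subseteq \alpha$. For $\supseteq$: given $(a_1,a_2,c_1,c_2)\in \Delta^\flat_{\mu,\alpha}$, I will exhibit the composition by taking $e := a_2$, $e' := c_2$, and $b_i := (a_i,a_2)/\Delta$ for $i=1,2$. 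Then $(a_1,a_2,b_1,b_2)\in \TD a$ is built in, and $(b_1,b_2,c_1,c_2)\in (\TD a)^\cup$ reduces to the two equalities $b_1 = (c_1,c_2)/\Delta$ (which is the hypothesis) and $b_2 = (c_2,c_2)/\Delta$, the latter being equivalent to $(a_2,c_2)\in \alpha$, which I extract by applying $\pi_2$ to the $\Delta$-related pair $((a_1,a_2),(c_1,c_2))$.

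For $\subseteq$: starting from witnesses $e,e',b_1,b_2$ giving $((a_i,e),(c_i,e'))\in \Delta$ for $i=1,2$ (so $a_1,a_2,e$ lie in one common $\mu$-class and $c_1,c_2,e'$ in another), my plan is to ``differentiate'' these two equations in $D(\m a)$ using the weak difference term $d$ supplied by Theorem~\ref{thm:locfin}. First, $\pi_1$ applied to $((a_2,e),(c_2,e'))\in\Delta$ yields $(a_2,c_2)\in \alpha$, hence $((a_2,a_2),(c_2,c_2))\in \Delta$. Then I evaluate $d$ in $D(\m a)$ on the triple $(b_1,b_2,(a_2,a_2)/\Delta)$ in two ways: the ``$a$-side'' representatives $(a_1,e),(a_2,e),(a_2,a_2)$ in $\m a(\mu)$ produce $(d(a_1,a_2,a_2),d(e,e,a_2)) = (a_1,a_2)$ because $\mu$ is abelian and $d$ satisfies the weak-difference identities on $\mu$-classes, while the ``$c$-side'' representatives $(c_1,e'),(c_2,e'),(c_2,c_2)$ similarly produce $(c_1,c_2)$. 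Since the three inputs agree in $D(\m a)$, so do the two outputs, yielding $((a_1,a_2),(c_1,c_2))\in \Delta$ as required.

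The main step requiring care is this last computation: I must verify that each triple fed into $d^{\m a(\mu)}$ lies coordinatewise inside a common $\mu$-class of $\m a$ so that the identities $d(x,y,y)=x$ and $d(y,y,x)=x$ apply. This is exactly why I deliberately put $a_2$ and $e$ (or their $c$-counterparts) in the middle slot, since the outer entries in each coordinate are then $\mu$-related to the middle one; everything else in the argument is essentially bookkeeping.
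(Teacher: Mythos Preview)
Your argument is correct and follows essentially the same route as the paper's proof: the first equality via the trivialization of $I_{(\m a,0,\mu)}$, and the second via the choice $e=a_2$, $e'=c_2$ in one direction and the weak difference term applied to the three $\Delta$-related pairs $(a_1,e)\equiv_\Delta(c_1,e')$, $(a_2,e)\equiv_\Delta(c_2,e')$, $(a_2,a_2)\equiv_\Delta(c_2,c_2)$ in the other. Your explicit verification that $\pi_i(\Delta)\subseteq\alpha$ (so that the common-$\alpha$-class condition is met) is a detail the paper glosses over.

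One purely cosmetic slip: your ``$\supseteq$'' and ``$\subseteq$'' labels are swapped. Under ``For $\supseteq$'' you start with an element of $\Delta^\flat_{\mu,\alpha}$ and place it in the composition, which proves $\Delta^\flat_{\mu,\alpha}\subseteq \TD a\circ(\TD a)^\cup$; the paragraph labeled ``For $\subseteq$'' proves the reverse containment. The mathematics is unaffected.
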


\begin{proof}
The first equality follows from the fact that
$I_{(\m a,0,\mu)} = \set{(a,b,a,b)}{$(a,b) \in \mu$}$.  
Let $\Delta = \Delta_{\mu,\alpha}$.
The second
equality will follow if we can show that for all $(a_1,a_2)$, $(b_1,b_2) \in
\mu$  with $a_1,a_2,b_1,b_2$ belonging to a common $\alpha$-class,
\[
(a_1,a_2)\stackrel{\Delta}{\equiv} (b_1,b_2) \iff 
\exists e \stackrel{\mu}{\equiv} a_1,
\exists e' \stackrel{\mu}{\equiv} b_1
\left( 
(a_1,e) \stackrel{\Delta}{\equiv} (b_1,e') ~\&~
(a_2,e) \stackrel{\Delta}{\equiv} (b_2,e')\right).
\]
The forward implication is easy: choose $e=a_2$ and $e' = b_2$.
For the reverse implication, 
apply a weak difference term component-wise to
\[
(a_1,e) \stackrel\Delta\equiv (b_1,e'),\quad
(a_2,e) \stackrel\Delta\equiv (b_2,e'),\quad
(a_2,a_2) \stackrel\Delta\equiv (b_2,b_2)
\]
to get $(a_1,a_2)\stackrel\Delta\equiv (b_1,b_2)$.
\end{proof}

\begin{thm} \label{thm:samebridge}
Suppose $\m a,\m b$ are finite algebras in a locally finite Taylor variety,
$\rho \in \Con(\m a)$ and $\sigma \in \Con(\m b)$ where both $\rho$ and $\sigma$
are meet-irreducible, and $\coverrho,\coversigma$ are their respective 
unique upper covers.  Assume that $\coverrho/\rho$ and $\coversigma/\sigma$
are abelian.
Then for every bridge $T$ from $(\m a,\rho,\coverrho)$ to
$(\m b,\sigma,\coversigma)$, the bridge
$T':= \Topt_{(\m a,\rho)}\circ T \circ \Topt_{(\m b,\sigma)}$
contains a bridge from $(\m a,\rho,\coverrho)$ to
$(\m b,\sigma,\coversigma)$ with the same trace as $T'$ and satisfying \ref{pbridge:it3}.
\end{thm}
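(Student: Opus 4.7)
The plan is to define $T'' := T' \cap \{(a_1,a_2,b_1,b_2) : (a_i,b_i)\in \tr(T')\text{ for } i=1,2\}$ and show this is the sub-bridge required. Since $\tr(T')$ is a subuniverse of $\m a\times \m b$, $T''$ is a subuniverse of $\m a^2\times \m b^2$. One easily checks that $\tr(T'')=\tr(T')$ (the added constraint is vacuous on diagonal tuples), that property \ref{pbridge:it3} holds by construction, that property \ref{pbridge:it2} is inherited from $T'$, and that property \ref{pbridge:it0} follows once one observes that $\tr(T')$ is ``$\rho$-on-the-left, $\sigma$-on-the-right'' saturated, a direct consequence of applying \ref{pbridge:it0} for $T'$ to diagonal tuples $(a,a,b,b)\in T'$.

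The substance of the argument will be verifying property \ref{pbridge:it1}: $\proj_{1,2}(T'')=\coverrho$ and $\proj_{3,4}(T'')=\coversigma$. The key tool is Lemma~\ref{lm:opt2}\eqref{opt2:it3}, which under the abelianness hypothesis yields $\Cov(\rho)=\{\coverrho\}$; that is, $\coverrho$ is the unique minimal $\rho$-saturated subuniverse of $\m a^2$ properly containing $\rho$. Since $\proj_{1,2}(T'')$ is $\rho$-saturated (from \ref{pbridge:it0} for $T''$) and contains $\rho$---the inclusion $0_A\subseteq \proj_{1,2}(T'')$ follows from $\proj_1(\tr(T'))=A$, which itself uses $\proj_{1,2}(T')=\coverrho\supseteq 0_A$ together with \ref{pbridge:it2}---it suffices to exhibit a single element of $T''$ with first two coordinates outside $\rho$; the $\Cov(\rho)=\{\coverrho\}$ dichotomy will then force $\proj_{1,2}(T'')\supseteq \coverrho$. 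A symmetric argument will establish $\proj_{3,4}(T'')=\coversigma$.

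To produce such an element, take any $(a_1,a_2,b_1^0,b_2^0)\in T'$ with $(a_1,a_2)\in\coverrho\setminus \rho$ (exists as $\coverrho\ne \rho$), set $\alpha:=(\rho:\coverrho)$ and $\beta:=(\sigma:\coversigma)$, and invoke Lemma~\ref{lm:modiso} applied to $T$ to obtain the isomorphism $\gamma:\m a/\alpha\cong\m b/\beta$ such that $\tr(T')=\alpha\circ\tr(T)\circ\beta$ is the $(\alpha,\beta)$-preimage of $\graph(\gamma)$. Decomposing through $T'=\Topt_{(\m a,\rho)}\circ T\circ \Topt_{(\m b,\sigma)}$ yields intermediates $(a_1^*,a_2^*,b_1^*,b_2^*)\in T$ with $(a_i,a_i^*)\in\alpha$ and $(b_i^*,b_i^0)\in\beta$, and using $\proj_1(\tr(T))=A$ I pick $\tilde b_i\in B$ with $(a_i^*,\tilde b_i)\in \tr(T)$, forcing $\tilde b_i/\beta=\gamma(a_i/\alpha)$. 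The plan is then to apply the weak difference term $d$ coordinatewise to three $T$-tuples built from $(a_1^*,a_2^*,b_1^*,b_2^*)$ together with the two diagonals $(a_i^*,a_i^*,\tilde b_i,\tilde b_i)\in T$, using Lemma~\ref{lm:gumm} and the abelianness of both $\coverrho/\rho$ and $\coversigma/\sigma$ to guarantee Maltsev-like behavior of $d$ on the relevant congruence classes; the output should be a $T$-tuple whose third and fourth coordinates $\hat b_1,\hat b_2$ lie in the correct $\beta$-classes $\gamma(a_i/\alpha)$, so that after composing with identity-like pieces of $\Topt_{(\m a,\rho)}$ and $\Topt_{(\m b,\sigma)}$ we obtain the required element of $T''$. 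The hard part will be the precise coordinatewise calculation with $d$---verifying the target $\beta$-classes and handling any coordinate swap that $d$ introduces will probably require replacing one of the tuples with a converse tuple such as $(a_2^*,a_1^*,c_1,c_2)\in T$, which we may choose since $\proj_{1,2}(T)=\coverrho$ is symmetric.
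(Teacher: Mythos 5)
Your reduction framework is fine, and in fact slightly cleaner than the paper's: intersecting $T'$ with the aligned set $W=\{(a_1,a_2,b_1,b_2):(a_1,b_1),(a_2,b_2)\in\tr(T')\}$ does yield a subuniverse satisfying \ref{pbridge:it0}, \ref{pbridge:it2}, \ref{pbridge:it3} with the same trace, and your use of $\Cov(\rho)=\{\coverrho\}$ from Lemma~\ref{lm:opt2}\eqref{opt2:it3} correctly reduces \ref{pbridge:it1} to exhibiting a single tuple of $T'$ whose first pair is outside $\rho$ and whose coordinates are trace-aligned. (The paper does the analogous reduction in its Claim 6.11.1, generating a sub-bridge from the diagonal plus one aligned off-diagonal tuple and using Lemma~\ref{lm:maltsev}.) But that single tuple is the entire content of the theorem, and your plan for producing it does not work. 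The obstruction is that a tuple $(a_1^*,a_2^*,b_1^*,b_2^*)\in T$ with $a_1^*\ne a_2^*$ may be ``shifted'': $b_1^*/\beta$ need not equal $\gamma(a_1^*/\alpha)$, and no application of the weak difference term $d$ to such a tuple together with diagonal tuples $(a_i^*,a_i^*,\tilde b_i,\tilde b_i)$ can repair this. The elements $b_i^*$ and $\tilde b_i$ are not known to lie in a common class of any abelian congruence ($\beta=(\sigma:\coversigma)$ is a centralizer and is in general not abelian), so Lemma~\ref{lm:gumm} gives you no Maltsev-type control over $d$ on them, not even modulo $\beta$; in $\m b/\beta$ the term $d$ is just an idempotent Taylor-variety term and $d(x,y,y)\approx x$ is unavailable. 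Likewise the $\Topt$-saturation cannot fix a shift, since $\Delta_{\coversigma,\beta}\leq\overline\beta$ means composing with $\Topt_{(\m b,\sigma)}$ never changes the $\beta$-class of the third coordinate. So ruling out a globally shifted bridge requires a genuinely different argument, and your sketch (``the precise coordinatewise calculation with $d$ \ldots will probably require replacing one of the tuples with a converse tuple'') is a placeholder exactly where the proof has to happen.

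For comparison, the paper gets the aligned off-diagonal tuple only after a further reduction, via the proper bridges $\TD a$, $\TD b$ and Lemma~\ref{lm:TDTDinv}, to the case where $\m a$ and $\m b$ are of the form $D(\m a_1)$, $D(\m b_1)$; there the centralizer of the monolith equals the monolith and, crucially, there are \emph{subuniverse} transversals $S_{\m a},S_{\m b}$ of the monoliths (Theorem~\ref{cor:D(A)}). It then proves, through Claims 6.11.2--6.11.8, that $R=\proj_{1,3}\bigl(T\cap(A\times S_{\m a}\times B\times S_{\m b})\bigr)$ is the graph of an isomorphism $g:\m a\cong\m b$ contained in $\tr(T)$, using Lemma~\ref{lm:poly} (maximality of the transversal subuniverse) to get $\proj_1(R)=A$, plus the Maltsev-pair and congruence-$\theta$ analysis; any $a\notin S_{\m a}$ then yields the required tuple $(a,\pi_{\m a}(a),g(a),\pi_{\m b}(g(a)))$. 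Your proposal contains no counterpart to this machinery, so as it stands the proof has a genuine gap at its central step.
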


\begin{proof}
First, we can assume with no loss of generality that $T'=T$.
Second, we can assume that $\rho=0_A$ and
$\sigma=0_B$.  For we can let $\overline{\m a}=\m a/\rho$,
$\overline{\m b}=\m b/\sigma$, $\mu=\coverrho/\rho$, $\montwo=\coversigma/\sigma$,
and 
\[
\barT = \set{(a_1/\rho,a_2/\rho,b_1/\sigma,b_2/\sigma)}{$(a_1,a_2,b_1,b_2) \in T$}
\]
and $\barT$ will be a bridge from $(\overline{\m a},0,\mu)$ to
$(\overline{\m b},0,\montwo)$ satisfying $\barT=T_{(\overline{\m a},0)}\circ \barT
\circ T_{(\overline{\m b},0)}$.
If there exists a bridge $T_1$ from $(\overline{\m a},0,\mu)$ to
$(\overline{\m b},0,\montwo)$ satisfying $T_1 \subseteq \barT$, 
$\tr(T_1)=\tr(\barT)$ and \ref{pbridge:it3}, then 
$T_0 := \set{(a_1,a_2,b_1,b_2)}{$(a_1/\rho,a_2/\rho,b_1/\sigma,b_2/\sigma)
\in T_1$}$ will be a bridge from $(\m a,\rho,\coverrho)$ to
$(\m b,\sigma,\coversigma)$ satisfying $T_0\subseteq T$, $\tr(T_0)=\tr(T)$ and \ref{pbridge:it3}.

So for the remainder of this proof assume that $T'=T$, $\rho=0_A$, and
$\sigma=0_B$.  For readability, rename $\coverrho$ as $\mu$ and $\coversigma$
as $\montwo$.  
Let $\alpha = (0:\mu)$, $\Delta_{\m a}=\Delta_{\mu,\alpha}$,
and $\Dmon_{\m a}=\baralpha/\Delta_{\m a}$.
Recall from
Corollary~\ref{cor:simprop} that the set
\[
\TD a = \set{(a_1,a_2,(a_1,e)/\Delta_{\m a},(a_2,e)/\Delta_{\m a})
\in A\times A\times D(\m a) \times D(\m a)}
{$a_1\stackrel{\mu}{\equiv} a_2\stackrel{\mu}{\equiv} e$}
\]
is a \proper\ bridge from $\m a$ to $D(\m a)$, and hence is
a bridge from $(\m a,0,\mu)$ to $(D(\m a),0,\Dmon_{\m a})$ satisfying \ref{pbridge:it3}.

Similarly define $\beta=(0:\montwo)$, $\Delta_{\m b}=\Delta_{\montwo,\beta}$, and
$\Dmon_{\m b}=\barbeta/\Delta_{\m b}$; then
\[
\TD b = \set{(b_1,b_2,(b_1,u)/\Delta_{\m b},(b_2,u)/\Delta_{\m b})
\in B\times B\times D(\m b) \times D(\m b)}
{$b_1\stackrel{\mu}{\equiv} b_2\stackrel{\mu}{\equiv} u$}
\]
is a bridge from $(\m b,0,\montwo)$ to $(D(\m b),0,\Dmon_{\m b})$ satisfying \ref{pbridge:it3}.
Thus by composing, we get the bridge
$T^\ast := (\TD a)^\cup \circ T \circ \TD b$
from $(D(\m a),0,\Dmon_{\m a})$ to $(D(\m b),0,\Dmon_{\m b})$.

Suppose there exists a bridge $T_1^\ast$ 
from $(D(\m a),0,\Dmon_{\m a})$ to $(D(\m b),0,\Dmon_{\m b})$ satisfying $T_1^\ast
\subseteq T^\ast$, $\tr(T_1^\ast)=\tr(T^\ast)$ and \ref{pbridge:it3}.
In this case we could define $T_1 = \TD a \circ T_1^\ast
\cup (\TD b)^\cup$.  Then $T_1$ will be a bridge
from $(\m a,0,\mu)$ to $(\m b,0,\montwo)$.
We will have
\begin{align*}
T_1 &\subseteq \TD a \circ T^\ast \circ (\TD b)^\cup\\
& = \TD a \circ ((\TD a)^\cup \circ T \circ \TD b) \circ (\TD b)^\cup\\
&= (\TD a\circ (\TD a)^\cup) \circ T \circ (\TD b\circ (\TD b)^\cup)\\
&= \Topt_{(\m a,0)} \circ T \circ \Topt_{(\m b,0)} &\mbox{by Lemma~\ref{lm:TDTDinv}}\\
&= T &\mbox{as we've assumed $T'=T$.}
\end{align*}
Hence $\tr(T_1)\subseteq \tr(T)$.  Similarly,
\begin{align*}
\tr(T_1) &= 
\tr(\TD a) \circ \tr(T_1^\ast) \circ \tr((\TD b)^\cup)\\
&=\tr(\TD a) \circ \tr(T^\ast) \circ \tr((\TD b)^\cup) &\tr(T_1^\ast)=\tr(T^\ast)\\
&= \tr(\Topt_{(\m a,0)})\circ \tr(T)\circ \tr(\Topt_{(\m b,0)})\\
&= \alpha \circ \tr(T) \circ \beta,
\end{align*} 
which proves $\tr(T)\subseteq \tr(T_1)$.  Hence $\tr(T_1)=\tr(T)$.
Finally, it is easy to check that $T_1$ satisfies \ref{pbridge:it3}, since each
of $\TD a,T_1^\ast,\TD b$ satisfies \ref{pbridge:it3}.

The remarks in the previous paragraph
serve to further reduce the proof of Theorem~\ref{thm:samebridge} to the case
where $\m a$ and $\m b$ are replaced by $D(\m a)$ and $D(\m b)$ respectively
(and $\rho=0_{D(A)}$ and $\sigma=0_{D(B)}$ and $T'=T$).
Put differently, in proving Theorem~\ref{thm:samebridge}, 
we can further assume with no loss of generality that
$\m a \cong D(\m a_1)$ and $\m b\cong D(\m b_1)$ for some subdirectly
irreducible algebras $\m a_1,\m b_1$ with abelian monoliths.  
It follows from this assumption and Theorem~\ref{cor:D(A)} that
$\alpha=\mu$, $\beta=\montwo$, and there exist subuniverses
$S_{\m a}\leq \m a$ and $S_{\m b}\leq \m b$ which are transversals for
$\mu$ and $\montwo$ respectively.

Recall that we are assuming $T=\Topt_{(\m a,0)}\circ T\circ \Topt_{(\m b,0)}$.  
In this context
this means $T=\Delta^\flat_{\mu,\mu} \circ T\circ \Delta^\flat_{\montwo,\montwo}$.
Hence $\tr(T) = \tr(\Delta^\flat_{\mu,\mu}) \circ \tr(T) \circ \tr(\Delta^\flat_{\montwo,\montwo})
= \mu\circ\tr(T)\circ\montwo$.

Recall from Lemma~\ref{lm:modiso} that $\tr(T)$ induces
an isomorphism $\gamma:\m a/\mu\cong \m b/\montwo$ defined by
\[
\gamma(a/\mu)=b/\montwo \iff (a,b) \in \mu\circ \tr(T) \circ \montwo = \tr(T).
\]
We also have the homomorphism $\pi_{\m a}:\m a\ra \m s_{\m a}$ 
which sends each $a \in A$ to
the unique element of $S_{\m a} \cap a/\mu$.  Likewise we have $\pi_{\m b}:\m b\ra \m s_{\m b}$.
These retractions naturally induce isomorphisms $\barpi_{\m a}:\m a/\mu\cong \m s_{\m a}$ and
$\barpi_{\m b}:\m b/\montwo \cong \m s_{\m b}$ given by $\barpi_{\m a}(a/\mu)=\pi_{\m a}(a)$
and $\barpi_{\m b}(b/\montwo)=\pi_{\m b}(b)$.  Thus we get an isomorphism $\delta:\m s_{\m a}
\cong \m s_{\m b}$ given by $\delta = \barpi_{\m b} \circ \gamma \circ (\barpi_{\m a})^{-1}$.
Equivalently, 
\begin{equation} \label{eq:graphdelta}
\graph(\delta)= \tr(T)\cap (S_{\m a}\times S_{\m b}).
\end{equation}

\begin{clm} \label{samebridge:clm1}
If there exists $(a_1,a_2,b_1,b_2) \in T$ with $a_1\ne a_2$ and $(a_1,b_1)\in \tr(T)$,
then $T$ contains a bridge $T_1$ from $(\m a,0,\mu)$ to $(\m b,0,\montwo)$
satisfying $\tr(T_1)=\tr(T)$ and \ref{pbridge:it3}.
\end{clm}

\begin{proof}[Proof of Claim~\ref{samebridge:clm1}]
Let $(a_1,a_2,b_1,b_2)\in T$ satisfy $a_1\ne b_1$ and $(a_1,b_1)\in \tr(T)$.
Observe that we also have $(a_2,b_2)\in \tr(T)$ as $\tr(T) = \mu \circ \tr(T)\circ \montwo$,
and $b_1\ne b_2$ by \ref{pbridge:it1}.
Let $T_1$ be the subuniverse of $\m a\times \m a\times \m b\times \m b$ generated by
\[
\set{(a,a,b,b)}{$(a,b) \in \tr(T)$} \cup \{(a_1,a_2,b_1,b_2)\}.
\]
Then $T_1\subseteq T$.
We will verify that $T_1$ is a bridge from $(\m a,0,\mu)$ to $(\m b,0,\montwo)$.
Property \ref{pbridge:it0} is trivially true and \ref{pbridge:it2} is inherited from $T$, so what
must be shown is \ref{pbridge:it1}: that $\proj_{1,2}(T_1)=\mu$ and $\proj_{3,4}(T_1)=\montwo$.
By construction, $\proj_{1,2}(T_1)$ is a reflexive subuniverse of $\mu$ properly containing $0_A$;
hence $\proj_{1,2}(T_1)=\mu$ by Lemma~\ref{lm:maltsev}.  A similar argument gives
$\proj_{3,4}(T_1)=\montwo$.  Thus $T_1$ is a bridge from $(\m a,0,\mu)$ to $(\m b,0,\montwo)$
satisfying $T_1\subseteq T$, and clearly $\tr(T_1)=\tr(T)$ by construction.

Now let
\[
W = \set{(a,a',b,b') \in A\times A\times B\times B}{
$(a,b),(a',b') \in \tr(T)$}.
\]
Note that $W$ is a subuniverse of $\m a\times \m a\times \m b\times \m b$.
As the generators of $T_1$ are contained in $W$, we get $T_1\subseteq W$, which proves that
$T_1$ satisfies \ref{pbridge:it3}.
\end{proof}

The remainder of the proof of Theorem~\ref{thm:samebridge} will consist
of the construction of a tuple $(a_1,a_2,b_1,b_2)\in T$
satisfying the hypotheses of Claim~\ref{samebridge:clm1}.
Define $R\leq \m a\times \m b$ by
\[
R = 
\proj_{1,3}(T\cap (A\times S_{\m a} \times B\times S_{\m b})).
\]

\begin{clm} \label{samebridge:clm2}
{~}
\begin{enumerate}
\item \label{sb-clm2:it1}
$\graph(\delta)\subseteq R$.  Thus $S_{\m a}\subseteq \proj_1(R)$ and $S_{\m b}\subseteq \proj_2(R)$.
\item \label{sb-clm2:it2}
For all $(a,b)\in R$ we have $a \in S_{\m a} \iff b \in S_{\m b}\iff (a,b)\in \graph(\delta)$.
\end{enumerate}
\end{clm}

\begin{proof}
\eqref{sb-clm2:it1} Assume $(a,b) \in \graph(\delta)$. Then
 $(a,b)\in\tr(T)\cap (S_{\m a}\times S_{\m b})$ by \eqref{eq:graphdelta},
so $(a,a,b,b) \in T \cap (S_{\m a}\times S_{\m a}\times S_{\m b}\times
S_{\m b})$, proving $(a,b) \in R$.

\eqref{sb-clm2:it2}
Suppose we have $(a,b) \in R$ with $a \in S_{\m a}$.  Choose $x \in S_{\m a}$ and
$y \in S_{\m b}$ with $(a,x,y,b)\in T$.  As $(a,x)\in \mu$ and $a,x \in S_{\m a}$, we
get $a=x$, which forces $b=y$ by bridge property \ref{pbridge:it2}.  Hence 
$(a,b) \in \tr(T)\cap (S_{\m a}\times S_{\m b})=\graph(\delta)$ by
\eqref{eq:graphdelta}.
\end{proof}

\begin{clm} \label{samebridge:clm3}
$\proj_1(R)\ne S_{\m a}$ and $\proj_2(R)\ne S_{\m b}$.
\end{clm}

\begin{proof}[Proof of Claim~\ref{samebridge:clm3}]
Pick a $\mu$-class $C$ with $|C|>1$.  Let $u$ be the unique element in $C\cap S_{\m a}$ and
pick $a \in C\setminus \{u\}$.  As $T$ is a bridge, there exists $(b,c) \in \montwo$
with $b\ne c$ and $(a,u,b,c) \in T$.  Let $y = \pi_{\m b}(b) \in S_{\m b}$ and let $x = \delta^{-1}(y) \in S_{\m a}$.  Then $(x,y) \in \graph(\delta)\subseteq \tr(T)$, and hence $(x,c) \in \tr(T)$
as well (using $\tr(T)=\tr(T)\circ\montwo$).

Thus the following are elements of $T$:
\[
(x,x,y,y),\quad (x,x,c,c),\quad (a,u,b,c).
\]
Applying the weak difference term $d(x,y,z)$ to these elements of $T$ gives
\begin{equation} \label{samebridge:eq1}
(a',u',b',y) \in T
\end{equation}
where
$a' = d(x,x,a)$, 
$u' = d(x,x,u)$ and
$b' = d(y,c,b)$.
Observe that $u' \in S_{\m a}$ since $x,u \in S_{\m a}$.
Thus \eqref{samebridge:eq1} implies $(a',b') \in R$.
Also note that $b,c,y$ belong to a common $\montwo$-class, so 
$b' = y-c+b$ calculated in the abelian group
$\Grp b(\montwo,y)$ by Lemma~\ref{lm:gumm}, so
$b'\ne y$ (since $b\ne c$).
Thus $b'\not\in S_{\m b}$, proving $\proj_2(R)\ne S_{\m b}$.  A similar proof gives
$\proj_1(R)\ne S_{\m a}$.
\end{proof}

\begin{clm} \label{samebridge:clm4}
$\proj_1(R)=A$ and $\proj_2(R)=B$.
\end{clm}

\begin{proof}[Proof of Claim~\ref{samebridge:clm4}]
By Claim~\ref{samebridge:clm2}, $S_{\m a}\subseteq \proj_1(R)$.
By Claim~\ref{samebridge:clm3}, 
we can choose $a_0 \in \proj_1(R)$ with $a_0\not\in S_{\m a}$.  Let $a \in A$ be arbitrary.
By Lemma~\ref{lm:poly}, there exists a term $t(x,\tup y)$ and a tuple $\tup u$ of elements
from $S_{\m a}$ such that $t(a_0,\tup u) = a$.  As $\proj_1(R)$ is a subuniverse, this proves
$a \in \proj_1(R)$, and as $a$ was arbitrary, we have shown $\proj_1(R)=A$.  
A similar proof gives $\proj_2(R)=B$.
\end{proof}

Now define 
\[
\theta := \set{(u,u') \in (S_{\m a})^2}{$\exists (a,b) \in R$ with $\pi_{\m a}(a)=u$ and
$\pi_{\m b}(b) = \delta(u')$}.
\]
If $b,c$ are two elements from $A$ or two elements from $B$, we will say that
$(b,c)$ is a \emph{Maltsev pair} if $d(b,b,c)=c=d(c,b,b)$.

\begin{clm} \label{samebridge:clm5}
{~}
\begin{enumerate}
\item \label{sb-clm5:it0}
For all $(a,u,b,y) \in T\cap (A\times S_{\m a}\times B\times S_{\m b})$,
if $y' = \delta(u)$ and $u' = \delta^{-1}(y)$, then
$(u,u'),(a,u'),(b,y'),(y,y')$ are Maltsev pairs.

\item \label{sb-clm5:it1}
For all $(u,u') \in \theta$, $(u,u')$ and $(u',u)$ are Maltsev pairs.
\item \label{sb-clm5:it3}
$\theta \in \Con(\m s_{\m a})$.
\end{enumerate}
\end{clm}

\begin{proof}[Proof of Claim~\ref{samebridge:clm5}]
\eqref{sb-clm5:it0}
Suppose $(a,u,b,y) \in T\cap(A\times S_{\m a}\times B\times S_{\m b})$
and $y'=\delta(u) \in S_{\m b}$ and $u'=\delta^{-1}(y) \in S_{\m a}$.
As $\graph(\delta)\subseteq\tr(T)$ 
by \eqref{eq:graphdelta}, we get $(u',u',y,y) \in T$.  
Let $x=d(u,u,u') \in S_{\m a}$.
Applying the weak difference term component-wise to 
$(a,u,b,y)$, $(a,u,b,y)$ and $(u',u',y,y)$ gives
\[
(d(a,a,u'),x,y,y)\in T.
\]
By bridge property \ref{pbridge:it2}, we get $d(a,a,u')=x$ and thus
$(x,y)\in \tr(T)$.  As $(x,y) \in S_{\m a}\times S_{\m b}$, we then get
$(x,y) \in \graph(\delta)$ by \eqref{eq:graphdelta}, so $x=u'$, which proves
$d(a,a,u')=d(u,u,u')=u'$.  The other required equalities are proved 
similarly.

\eqref{sb-clm5:it1}
Given $(u,u')\in \theta$, pick $(a,b) \in R$ with $\pi_{\m a}(a)=u$ and
$y:=\pi_{\m b}(b) =\delta(u')$.  
Because $(a,b) \in R$, we then get $(a,u,b,y)\in T$.  
Thus $(u,u')$ is a Maltsev pair by \eqref{sb-clm5:it0}.  
Let $y' = \delta(u)$; then $(y,y')$ is a Maltsev pair, again
by \eqref{sb-clm5:it0}.  Since $(y,y') = (\delta(u'),\delta(u))$ and
$\delta$ is an isomorphism, it follows that $(u',u)$ is a Maltsev pair.

\eqref{sb-clm5:it3}
$\theta$ is a subuniverse of $(\m s_{\m a})^2$ by virtue of how $\theta$ is 
defined, and is reflexive by Claim~\ref{samebridge:clm2}.
It then follows from \eqref{sb-clm5:it1}
and Lemma~\ref{lm:maltsev} that $\theta$ is a congruence.
\end{proof}

\begin{clm} \label{samebridge:clm7}
$R$ is the graph of an isomorphism $g:\m a\cong \m b$ extending $\delta$.
\end{clm}

\begin{proof}[Proof of Claim~\ref{samebridge:clm7}]
Suppose $(a,b_1),(a,b_2) \in R$.  
Let $u:=\pi_{\m a}(a)$, $y_i:=\pi_{\m b}(b_i)$, and
$u_i' = \delta^{-1}(y_i)$ for $i=1,2$. 
Then $(a,u,b_i,y_i) \in T$ and $(u,u_i') \in \theta$ for $i=1,2$.
Hence $(u_1',u_2') \in \theta$ by 
Claim~\ref{samebridge:clm5}\eqref{sb-clm5:it3},
so $d(u_1',u_1',u_2')=u_2'$ by 
Claim~\ref{samebridge:clm5}\eqref{sb-clm5:it1}, so
$d(y_1,y_1,y_2)=y_2$ as $\delta$ is an isomorphism.
Also note that $(a,u_1')$ and $(u,u_1')$ are Maltsev pairs by
Claim~\ref{samebridge:clm5}\eqref{sb-clm5:it0}, so
$d(u_1',a,a)=d(u_1',u,u)=u_1'$.  Finally, from $(u_1',y_1)\in \graph(\delta)
\subseteq \tr(T)$ we get $(u_1',u_1',y_1,y_1)\in T$.  Thus applying
$d$ to 
\[
(u_1',u_1',y_1,y_1),\quad (a,u,b_1,y_1), \quad (a,u,b_2,y_2),
\]
we get
\[
(u_1',u_1',d(y_1,b_1,b_2),y_2) \in T.
\]
By \ref{pbridge:it2} we get $d(y_1,b_1,b_2)=y_2$
and so $(u_1',y_2) \in \tr(T) \cap (S_{\m a}\times S_{\m b}) = \graph(\delta)$
by \eqref{eq:graphdelta},
so $y_2=y_1$.  Thus $y_1,b_1,b_2$ belong to a common $\montwo$-class and
$d(y_1,b_1,b_2)=y_1$, which forces $b_1=b_2$.

This and Claim~\ref{samebridge:clm4} prove 
that $R$ is the graph of a surjective homomorphism $g:\m a\ra \m b$.
A symmetrical argument proves that $g$ is injective.  $g$ extends $\delta$
by Claim~\ref{samebridge:clm2}\eqref{sb-clm2:it1}, which completes
the proof of Claim~\ref{samebridge:clm7}.
\end{proof}

\begin{clm} \label{samebridge:clm8}
$R\subseteq \tr(T)$.
\end{clm}

\begin{proof}[Proof of Claim~\ref{samebridge:clm8}]
Let $(a,b) \in R$, $u = \pi_{\m a}(a)$ and $y=\pi_{\m b}(b)$.
Then $(a,u,b,y) \in T$.  Since $g$ is an isomorphism,
we have $g(\mu)=\montwo$, and since $g(a)=b$ and $g(u)=\delta(u)$,
we get $(b,\delta(u))=(g(a),g(u)) \in \montwo$.
Since $\delta(u),y\in b/\montwo \cap S_{\m b}$, we get $\delta(u)=y$,
so $(u,y) \in \graph(\delta)\subseteq \tr(T)$.   As $T=\mu\circ T\circ \montwo$,
we get $(a,b) \in \tr(T)$.
\end{proof}

Now we can finish the proof of Theorem~\ref{thm:samebridge}.  
Pick any $a \in A\setminus S_{\m a}$.
Let $b=g(a)$, so $(a,b) \in R$.  
Also let $u = \pi_{\m a}(a)$ and $y=\pi_{\m b}(b)$.
Then $(a,u,b,y)\in T$,
$a\ne u$, and $(a,b) \in \tr(T)$ by Claim~\ref{samebridge:clm8}.
Thus $(a_1,a_2,b_1,b_2):=(a,u,b,y)$ satisfies the hypotheses
of Claim~\ref{samebridge:clm1}, and hence by that Claim there exists a 
bridge $T_1$ from $(\m a,0,\mu)$ to
$(\m b,0,\montwo)$ satisfying $T_1\subseteq T$,
$\tr(T_1)=\tr(T)$ and \ref{pbridge:it3}, as required.
\end{proof}

As a consequence, \rooted\ bridges between meet-irreducible congruences
encode similarity between the respective subdirectly irreducible quotients.

\begin{cor} \label{cor:zhuk-equiv}
Suppose $\m a,\m b$ are finite algebras in a locally finite Taylor variety,
$\rho \in \Con(\m a)$ and $\sigma \in \Con(\m b)$ where both $\rho$ and $\sigma$
are meet-irreducible, and $\coverrho,\coversigma$ are their respective unique
upper covers.
Then the following are equivalent:
\begin{enumerate}
\item \label{zhuk-equiv-it1}
There exists a \rooted\ bridge from $(\m a,\rho)$ to $(\m b,\sigma)$.
\item \label{zhuk-equiv-it2b}
There exists a bridge from $(\m a,\rho,\coverrho)$ to $(\m b,\sigma,\coversigma)$
which satisfies \ref{pbridge:it3}.
\item \label{zhuk-equiv-it3}
$\m a/\rho$ and $\m b/\sigma$ are similar; i.e., $\m a/\rho \sim \m b/\sigma$.
\end{enumerate}
\end{cor}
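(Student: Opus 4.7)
The plan is to establish the cyclic chain (1) $\Rightarrow$ (3) $\Rightarrow$ (2) $\Rightarrow$ (1), using Theorem~\ref{thm:samebridge} as the main engine together with Theorem~\ref{lm:newsametype}, Lemma~\ref{lm:sametype}, and Theorem~\ref{thm:persp}. The main obstacle will be the abelian case of (1) $\Rightarrow$ (3), which rests on Theorem~\ref{thm:samebridge} (the deep technical result of the paper) and a subsequent passage to quotients; the remaining implications are essentially bookkeeping with projections and quotients.

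For (1) $\Rightarrow$ (3), I first apply Theorem~\ref{lm:newsametype} to force $\coverrho/\rho$ and $\coversigma/\sigma$ to be abelian together. In the nonabelian case, Lemma~\ref{lm:sametype}\eqref{sametype:it1} directly gives $\m a/\rho \cong \m b/\sigma$, and since $D$ acts as the identity on subdirectly irreducibles with nonabelian monoliths (Definition~\ref{df:D(A)}), this yields $\m a/\rho \sim \m b/\sigma$. In the abelian case, Lemma~\ref{lm:opt2}\eqref{opt2:it3} gives $\Cov(\rho)=\{\coverrho\}$ and $\Cov(\sigma)=\{\coversigma\}$, so the rooted bridge from (1) is already a bridge from $(\m a,\rho,\coverrho)$ to $(\m b,\sigma,\coversigma)$. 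Feeding it through Theorem~\ref{thm:samebridge} produces a sub-bridge satisfying \ref{pbridge:it3}; quotienting by $\rho$ and $\sigma$ yields a proper bridge from $\m a/\rho$ to $\m b/\sigma$, and Theorem~\ref{thm:persp}\eqref{persp:it4}$\Rightarrow$\eqref{persp:it1} concludes $\m a/\rho \sim \m b/\sigma$.

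For (3) $\Rightarrow$ (2), Theorem~\ref{thm:persp} yields a proper bridge $T^\ast$ from $\m a/\rho$ to $\m b/\sigma$, noting that $\coverrho/\rho$ and $\coversigma/\sigma$ are precisely their respective monoliths. I then lift it to
\[
T := \{(a_1,a_2,b_1,b_2) \in \coverrho\times\coversigma : (a_1/\rho,a_2/\rho,b_1/\sigma,b_2/\sigma) \in T^\ast\},
\]
and a routine check will confirm that $T$ is a bridge from $(\m a,\rho,\coverrho)$ to $(\m b,\sigma,\coversigma)$ satisfying \ref{pbridge:it3}, with the saturatedness of $\coverrho$ and $\coversigma$ supplying stability \ref{pbridge:it0}.

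For (2) $\Rightarrow$ (1), given a bridge $T$ from $(\m a,\rho,\coverrho)$ to $(\m b,\sigma,\coversigma)$, I extract a rooted sub-bridge by a double restriction. Pick any $\tau \in \Cov(\rho)$ and set $T_1 := T \cap (\tau\times\m b^2)$; then $\proj_{1,2}(T_1) = \tau$, and $\proj_{3,4}(T_1)$ is a $\sigma$-saturated subuniverse of $\coversigma$ strictly containing $\sigma$ (by \ref{pbridge:it2} applied to any $(a_1,a_2) \in \tau\setminus\rho$), hence contains some $\tau' \in \Cov(\sigma)$. Set $T_2 := T_1 \cap (\m a^2\times\tau')$; then $\proj_{3,4}(T_2)=\tau'$, and $\proj_{1,2}(T_2)$ is a $\rho$-saturated subuniverse of $\tau$ strictly containing $\rho$, which forces $\proj_{1,2}(T_2)=\tau$ by minimality of $\tau$ in $\Cov(\rho)$. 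Finally, reflexive quadruples of $T$ survive both restrictions, so $\tr(T_2)=\tr(T)$, witnessing that $T$ is rooted.
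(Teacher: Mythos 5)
Your proof is correct and uses the same key ingredients as the paper's (Theorems~\ref{lm:newsametype} and~\ref{thm:samebridge}, Lemmas~\ref{lm:opt2} and~\ref{lm:sametype}, Theorem~\ref{thm:persp}, and the quotient/lift correspondence after Definition~\ref{df:pbridge}) with the same abelian/nonabelian case split, so it is essentially the paper's argument reorganized into the single chain (1)$\Rightarrow$(3)$\Rightarrow$(2)$\Rightarrow$(1). Your double-restriction argument for (2)$\Rightarrow$(1) is correct but more work than needed: by the alternate characterization of rootedness stated just after Definition~\ref{df:rooted}, a bridge $T$ from $(\m a,\rho,\coverrho)$ to $(\m b,\sigma,\coversigma)$ has $T_0=T$ with $\proj_{1,2}(T_0)=\coverrho\neq\rho$, so (2)$\Rightarrow$(1) is immediate.
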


\begin{proof}
Let $\barma = \m a/\rho$ and $\barmb = \m b/\sigma$.
Let $\mu$ and $\montwo$ be the monoliths of $\barma$ and $\barmb$ respectively.
By the discussion following Definition~\ref{df:pbridge},
\eqref{zhuk-equiv-it2b} is equivalent to
\begin{enumerate}[label=(\arabic*$'$)]
\setcounter{enumi}{1}
\item \label{zhuk-equiv-it3'}
There exists a bridge from $(\overline{\m a},0_{\barA},\mu)$ to 
$(\overline{\m b},0_{\barB},\montwo)$ which satisfies \ref{pbridge:it3},
\end{enumerate}
which in turn is equivalent to 
\begin{enumerate}[label=(\arabic*$''$)]
\setcounter{enumi}{1}
\item \label{zhuk-equiv-it3''}
There exists a \proper\ bridge from $\barma$ to $\barmb$.
\end{enumerate}
Thus \eqref{zhuk-equiv-it2b}\Iff\eqref{zhuk-equiv-it3} by
Theorem~\ref{thm:persp}, and clearly
\eqref{zhuk-equiv-it2b}\Implies \eqref{zhuk-equiv-it1}.

It remains to prove \eqref{zhuk-equiv-it1}\Implies\eqref{zhuk-equiv-it2b}.
Assume that $T$ is a \rooted\ bridge from $(\m a,\rho)$ to $(\m b,\sigma)$.
If either $\coverrho/\rho$ or $\coversigma/\sigma$ is nonabelian, then both are nonabelian 
and $\barma \cong \barmb$ by Lemma~\ref{lm:sametype}\eqref{sametype:it1}; 
hence $\barma \sim \barmb$, proving \eqref{zhuk-equiv-it3} and hence 
\eqref{zhuk-equiv-it2b} in this case.

In the remaining case, both $\coverrho/\rho$ and $\coversigma/\sigma$
are abelian.  Then $\Cov(\rho)=\{\coverrho\}$ and
$\Cov(\sigma)=\{\coversigma\}$ by Lemma~\ref{lm:opt2}\eqref{opt2:it3}.  Since $T$ is
\rooted, it contains a bridge from $(\m a,\rho,\coverrho)$ to $(\m b,\sigma,\coversigma)$.
Then by Theorem~\ref{thm:samebridge}, there exists a bridge $T'$ from 
$(\m a,\rho,\coverrho)$
to $(\m b,\sigma,\coversigma)$ which satisfies \ref{pbridge:it3}, proving
\eqref{zhuk-equiv-it2b} in this case as well.
\end{proof}

As a special case, we get the following characterization of the ``there exists
a bridge" relation between irreducible congruences of finite Taylor algebras.

\begin{cor} \label{cor:zhuk-equiv2}
Suppose $\m a,\m b$ are finite algebras in a locally finite Taylor variety,
and $\rho \in \Con(\m a)$ and $\sigma \in \Con(\m b)$ where both $\rho$ and $\sigma$
are irreducible.
Then the following are equivalent:
\begin{enumerate}
\item 
There exists a bridge from $(\m a,\rho)$ to $(\m b,\sigma)$.
\item 
$\m a/\rho \sim \m b/\sigma$.
\end{enumerate}
\end{cor}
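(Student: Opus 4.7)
The plan is to deduce this corollary immediately from Corollary~\ref{cor:zhuk-equiv}. Since irreducibility is a strictly stronger property than meet-irreducibility, the hypotheses of Corollary~\ref{cor:zhuk-equiv} are satisfied, and it therefore suffices to show that in the irreducible case, the existence of a bridge from $(\m a,\rho)$ to $(\m b,\sigma)$ is equivalent to the existence of a \emph{rooted} bridge.

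One direction is immediate: every rooted bridge is in particular a bridge. For the other direction, suppose $T$ is any bridge from $(\m a,\rho)$ to $(\m b,\sigma)$. Because $\rho$ and $\sigma$ are irreducible, Lemma~\ref{lm:goodbridge} supplies a bridge $T' \subseteq T$ from $(\m a,\rho,\rho^\ast)$ to $(\m b,\sigma,\sigma^\ast)$ with $\tr(T')=\tr(T)$. Since $\rho^\ast \in \Cov(\rho)$ and $\sigma^\ast \in \Cov(\sigma)$ (indeed these sets are singletons for irreducible congruences), this exhibits $T$ as a rooted bridge in the sense of Definition~\ref{df:rooted}.

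With the equivalence between ``bridge'' and ``rooted bridge'' established in the irreducible setting, Corollary~\ref{cor:zhuk-equiv} gives the equivalence with $\m a/\rho \sim \m b/\sigma$, completing the proof. There is no real obstacle here; the only content is recalling that irreducibility trivializes the rootedness hypothesis.
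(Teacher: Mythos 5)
Your proposal is correct and is essentially the paper's own proof: the paper likewise reduces to Corollary~\ref{cor:zhuk-equiv} by invoking the observation (recorded after Definition~\ref{df:rooted}, via Lemma~\ref{lm:goodbridge} and $\Cov(\rho)=\{\rho^\ast\}$) that every bridge between irreducible congruences is rooted, with meet-irreducibility supplied by Lemma~\ref{lm:meetirred}.
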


\begin{proof}
Every bridge between irreducible congruences is \rooted.
\end{proof}

\bibliography{similar-bib}
\end{document}